\documentclass[11pt]{amsart}

\usepackage{a4wide}
\usepackage{amssymb}
\usepackage{mathrsfs}
\usepackage{enumerate}
\usepackage{esint}
\usepackage{titletoc}
\usepackage{mathscinet}
\usepackage[colorlinks=true, urlcolor=blue, linkcolor=blue, citecolor=green]{hyperref}
\usepackage[nameinlink]{cleveref}
\usepackage{mathtools}
\usepackage{xcolor}

\newtheorem{theorem}{Theorem}[section]
\newtheorem{lemma}[theorem]{Lemma}
\newtheorem{proposition}[theorem]{Proposition}
\newtheorem{corollary}[theorem]{Corollary}

\theoremstyle{plain}

\theoremstyle{definition}
\newtheorem{definition}[theorem]{Definition}

\theoremstyle{remark}
\newtheorem{remark}[theorem]{Remark}

\numberwithin{equation}{section} 


\makeatletter
\@namedef{subjclassname@2020}{\textup{2020} Mathematics Subject Classification}
\makeatother


\title[Quantitative bounds for H\"older exponents]{Quantitative bounds for H\"older exponents in the Krylov--Safonov and Evans--Krylov theories}

\author{Jongmyeong Kim}
\address{Institute of Mathematics, Academia Sinica, Taipei 106319, Taiwan}
\email{jmkim@gate.sinica.edu.tw}

\author{Se-Chan Lee}
\address{School of Mathematics, Korea Institute for Advanced Study, 02455 Seoul, Republic of Korea}
\email{sechan@kias.re.kr}

\subjclass[2020]{35B65; 35D40; 35J60}

\keywords{Viscosity solutions; Krylov--Safonov theory; Evans--Krylov theory}
\thanks{S.-C. Lee is supported by the KIAS Individual Grant (No. MG099001) at the Korea Institute for Advanced Study.}

\begin{document}
\begin{abstract}
    We establish quantitative bounds for H\"older exponents in the Krylov--Safonov and Evans--Krylov theories when the ellipticity ratio is close to one. Our analysis relies on the Ishii--Lions method for the Krylov--Safonov theory and a Schauder-type perturbation argument for the Evans--Krylov theory.
\end{abstract}

\maketitle

\section{Introduction}
The regularity theory for second-order elliptic equations has attracted considerable attention and has been extensively developed in the literature. In the linear uniformly elliptic setting with measurable coefficients, De Giorgi--Nash--Moser~\cite{DG57, Nas58, Mos60, Mos61} established the H\"older continuity of weak solutions, providing a positive answer to Hilbert's nineteenth problem. On the other hand, the interior H\"older regularity of solutions in the non-divergence framework was achieved by Krylov--Safonov~\cite{KS79,KS80} by utilizing the ABP estimate and the Harnack inequality; see also Wang~\cite{Wan92a} for a related result in the parabolic setting. Since their approaches are essentially based on ellipticity rather than linearity itself, the Krylov--Safonov theory has been widely extended to the regularity theory for fully nonlinear uniformly elliptic equations; we refer to the monograph by Caffarelli--Cabre~\cite{CC95}.
	
To illustrate the issue, let $0<\lambda\leq \Lambda$ be ellipticity constants and let $\mathcal{S}^n$ be the space of real $n \times n$ symmetric matrices. We define \emph{Pucci's extremal operators} $\mathcal M^\pm_{\lambda,\Lambda}$ by
\begin{equation*}
	\mathcal{M}_{\lambda, \Lambda}^{+}(M)  = \sup_{\lambda I \leq A \leq \Lambda I} \text{tr} (AM)  \quad \text{and} \quad
	\mathcal{M}^{-}_{\lambda, \Lambda}(M)  =\inf_{\lambda I \leq A \leq \Lambda I} \text{tr} (AM).
\end{equation*}

The classical Krylov--Safonov theory can be stated as follows; see Section~\ref{sec-preliminary} for the definition of viscosity solutions.
\begin{theorem}[\cite{KS79,KS80}]\label{thm-KS}
    If $u\in C(B_1)$ is a viscosity solution of 
    \begin{equation}\label{eq-main}
    \mathcal M^+_{\lambda,\Lambda}(D^2u)\geq -K \quad \text{and} \quad \mathcal M^-_{\lambda,\Lambda}(D^2u)\leq K \quad \text{in $B_1$}
    \end{equation}
		 for some constant $K\ge 0$, then $u\in C^{\alpha}(\overline B_{1/2})$ for some $\alpha=\alpha(n,\lambda,\Lambda)\in(0,1)$.
\end{theorem}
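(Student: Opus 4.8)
The plan is to follow the classical route: establish a weak Harnack inequality for nonnegative functions in the Pucci class defined by~\eqref{eq-main}, using the Aleksandrov--Bakelman--Pucci (ABP) estimate together with a Calder\'on--Zygmund--type measure iteration, and then deduce the H\"older bound from the resulting decay of oscillation. After a harmless normalization ($\|u\|_{L^\infty(B_{3/4})}+K\le 1$) and the standard covering and rescaling reductions, it suffices to produce $\gamma=\gamma(n,\lambda,\Lambda)\in(0,1)$ with
\begin{equation*}
\operatorname*{osc}_{B_{r/4}}u \;\le\; \gamma\,\operatorname*{osc}_{B_r}u + CKr \qquad\text{whenever } \overline B_r\subset B_1 ;
\end{equation*}
iterating this over dyadic radii and summing a geometric series yields $\operatorname*{osc}_{B_r}u\le C r^{\alpha}$ for $\alpha=\alpha(n,\lambda,\Lambda)\in(0,1)$, which is the claim.

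The first ingredient is the ABP estimate in the viscosity setting: if $w\in C(\overline B_\rho)$ satisfies $\mathcal M^-_{\lambda,\Lambda}(D^2 w)\le f$ in the viscosity sense and $w\ge 0$ on $\partial B_\rho$, then $\sup_{B_\rho}w^-\le C(n,\lambda,\Lambda)\,\rho\,\|f^+\|_{L^n(\Gamma_w)}$, where $\Gamma_w$ is the lower contact set of $w$. This is the one place where the viscosity hypothesis is genuinely used: the standard device is to replace $w$ by its sup-convolutions, which are semiconvex and hence, by Aleksandrov's theorem, twice differentiable almost everywhere and satisfy the differential inequality pointwise a.e., and then to pass to the limit in the regularization parameter. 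I expect this step, together with the construction of the algebraic barrier used below, to be the technical core of the argument.

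The second ingredient is the key measure (density) estimate: there are universal $M>1$ and $\mu\in(0,1)$ such that if $u\ge 0$ in a cube $Q_{4\sqrt n}$ with $\mathcal M^-_{\lambda,\Lambda}(D^2 u)\le K$ and $\inf_{Q_3}u\le 1$, then $|\{u\le M\}\cap Q_1|\ge\mu\,|Q_1|$. One subtracts from $u$ a fixed smooth barrier $\varphi$ with $\varphi\ge 0$ outside $B_{2\sqrt n}$, $\varphi\le -2$ on $Q_3$, and $\mathcal M^+_{\lambda,\Lambda}(D^2\varphi)\le C$ on $\overline{Q_1}$ and $\le 0$ elsewhere; applying the ABP estimate to $u+\varphi$ on $B_{2\sqrt n}$, and noting that on the lower contact set $u+\varphi$ is trapped between two affine functions (so $u$ is bounded there), yields the claim, using $K\le 1$. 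A dyadic Calder\'on--Zygmund cube decomposition (the ``growing ink-spots'' lemma) then upgrades this single estimate to the power decay $|\{u>t\}\cap Q_1|\le C t^{-\varepsilon}$ with $\varepsilon=\varepsilon(n,\lambda,\Lambda)>0$, valid for nonnegative $u\in C(B_1)$ with $\mathcal M^-_{\lambda,\Lambda}(D^2 u)\le K$ and $\inf_{Q_{1/4}}u\le 1$; integrating the power decay gives the weak Harnack inequality $\bigl(\fint_{B_{1/2}}u^\varepsilon\bigr)^{1/\varepsilon}\le C\bigl(\inf_{B_{1/4}}u+K\bigr)$, whose rescaled form on $B_r$ carries a lower-order term $CKr$.

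Finally I would close the loop. Write $M_r=\sup_{B_r}u$ and $m_r=\inf_{B_r}u$ and note that $M_r-u$ and $u-m_r$ are both nonnegative on $B_r$; moreover $\mathcal M^-_{\lambda,\Lambda}(D^2(M_r-u))=-\mathcal M^+_{\lambda,\Lambda}(D^2u)\le K$ uses the first inequality in~\eqref{eq-main}, while $\mathcal M^-_{\lambda,\Lambda}(D^2(u-m_r))\le K$ is the second. Since $(M_r-u)+(u-m_r)=\operatorname*{osc}_{B_r}u$ is constant, at each point of $B_{r/2}$ one of these functions is $\ge\tfrac12\operatorname*{osc}_{B_r}u$, so one of them exceeds $\tfrac12\operatorname*{osc}_{B_r}u$ on a subset of $B_{r/2}$ of measure at least $\tfrac12|B_{r/2}|$, whence its $L^\varepsilon$-average over $B_{r/2}$ is $\gtrsim\operatorname*{osc}_{B_r}u$; the (rescaled) weak Harnack inequality then bounds its infimum over $B_{r/4}$ from below by $c\operatorname*{osc}_{B_r}u-CKr$, which rearranges to the displayed oscillation decay and completes the proof. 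The main obstacle, as indicated, is the measure-theoretic core --- the viscosity ABP estimate and the barrier-plus-covering argument behind the $L^\varepsilon$-estimate --- whereas the concluding iteration is soft.
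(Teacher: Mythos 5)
Your sketch is the classical proof of Krylov--Safonov, essentially as it appears in Caffarelli--Cabr\'e: ABP estimate via sup-convolutions, barrier-plus-contact-set density estimate, Calder\'on--Zygmund cube decomposition to get the $L^\varepsilon$ power decay and the weak Harnack inequality, and then the oscillation-decay iteration applied to $M_r-u$ and $u-m_r$. The chain of implications is correct, and the paper does not actually reproduce a proof of Theorem~\ref{thm-KS} --- it cites \cite{KS79,KS80} and \cite{CC95}, where your argument is carried out in full. What the paper \emph{does} prove (Theorem~\ref{thm-holder}) is a quantitative version under the restriction $\Lambda/\lambda-1<2\sqrt{(1-\alpha)/(n-1)}$, and the method there is genuinely different from yours: the Ishii--Lions doubling-of-variables technique. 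One considers $\Phi(x,y)=u(x)-u(y)-L_1|x-y|^\alpha-L_2|x|^2-L_2|y|^2$, assumes $\sup\Phi>0$, applies the Ishii--Jensen lemma at a maximum to extract semijets $(q_x,X)$ and $(q_y,Y)$ together with a matrix inequality controlling $X-Y$, and then uses the spectral information and a comparison between the Pucci operators and the Laplacian to derive a contradiction once $L_1$ is large. The trade-offs are clear: your ABP/weak-Harnack route proves the qualitative theorem for \emph{every} ellipticity ratio but produces a nonexplicit $\alpha(n,\lambda,\Lambda)$ (and indeed the exponent from this route decays essentially doubly exponentially in $\Lambda/\lambda$, as quantified by Mooney~\cite{Moo19}); the Ishii--Lions route used in the paper is soft, avoids measure theory entirely, and yields an explicit relation between $\alpha$ and $\Lambda/\lambda$, but only closes the argument when $\Lambda/\lambda$ is sufficiently close to $1$ --- in particular it does not reprove Theorem~\ref{thm-KS} in its full generality, since the sign condition $1-\alpha>\tfrac{n-1}{4}(\Lambda/\lambda-1)^2$ becomes vacuous for large ellipticity ratios.
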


It is noteworthy that Theorem~\ref{thm-KS} does not provide quantitative information on the universal H\"older exponent $\alpha(n,\lambda,\Lambda)$. The upper bound (in some sense) for the H\"older exponent $\alpha$ was captured in a series of papers by Nadirashvili--Vl{\u a}du{\c t}~\cite{NV07, NV08, NV13}. More precisely, they proved that: for every $\tau>0$, there exist a dimension $n$ and ellipticity constants $\lambda$ and $\Lambda$ such that there are solutions $u$ of \eqref{eq-main} with $u \notin C^{\tau}$. The counterexamples proposed in their works were constructed under the assumption that the ellipticity ratio is large enough, that is, $\Lambda/\lambda \gg 1$.

Recently, Lee--Yun~\cite{LY24} established the improved $C^{1, \alpha}$ regularity of viscosity solutions of \eqref{eq-main} when the ellipticity ratio is sufficiently close to one. 
    \begin{theorem}[\cite{LY24}]\label{thm-LY}
		Given $\alpha \in (0,1)$, there exists $\varepsilon=\varepsilon(\alpha,n)>0$ such that if $|\Lambda/\lambda-1|<\varepsilon$, then any viscosity solution $u\in C(B_1)$ of
		\eqref{eq-main} for some constant $K \geq 0$ belongs to $C^{1, \alpha}(\overline B_{1/2})$.
	\end{theorem}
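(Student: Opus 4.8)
The plan is to run a compactness-plus-iteration (improvement of flatness) argument, exploiting the elementary fact that as $\Lambda/\lambda \to 1$ the Pucci operators $\mathcal M^\pm_{\lambda,\Lambda}$ collapse to a multiple of the trace, so a solution of \eqref{eq-main} with small $K$ should behave like a harmonic function. After dividing the equations by $\lambda$ we may assume $\lambda=1$, $\Lambda = 1+\delta$ with $\delta$ small; and since $\mathcal M^\pm$ are positively $1$-homogeneous, replacing $u$ by $u/\bigl(\|u\|_{L^\infty(B_1)} + K/\varepsilon_0\bigr)$ preserves the structure of \eqref{eq-main} while arranging $\|u\|_{L^\infty(B_1)}\le 1$ and $K\le\varepsilon_0$ for any prescribed $\varepsilon_0$. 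Thus it suffices to prove an a priori estimate $\|u\|_{C^{1,\alpha}(\overline B_{1/2})}\le C(n)$. The first ingredient is an \emph{approximation lemma}: for every $\eta>0$ there exist $\delta=\delta(\eta,n)>0$ and $\varepsilon_0=\varepsilon_0(\eta,n)>0$ such that if $\Lambda/\lambda-1<\delta$, $K\le\varepsilon_0$, $\|u\|_{L^\infty(B_1)}\le1$ and $u$ solves \eqref{eq-main}, then there is a harmonic function $h$ in $B_{3/4}$ with $\|u-h\|_{L^\infty(B_{3/4})}\le\eta$ and $\|h\|_{L^\infty(B_{3/4})}\le2$. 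This is proved by contradiction: given a violating sequence $(u_k,\lambda_k\equiv1,\Lambda_k,K_k)$ with $\Lambda_k\downarrow1$ and $K_k\downarrow0$, one first notes that property \eqref{eq-main} only weakens when $\Lambda$ is increased, so each $u_k$ also solves the corresponding inequalities with the fixed constants $(1,2)$; the quantitative form of Theorem~\ref{thm-KS} then yields a uniform bound $\|u_k\|_{C^{\alpha_0(n)}(\overline B_{7/8})}\le C(n)$, hence $u_k\to u_\infty$ locally uniformly along a subsequence. By the standard stability of viscosity solutions under locally uniform convergence of the solutions and of the operators (here $\mathcal M^\pm_{1,\Lambda_k}\to\operatorname{tr}$ locally uniformly on $\mathcal S^n$), $u_\infty$ is simultaneously a viscosity sub- and supersolution of $\Delta u_\infty=0$, hence harmonic, contradicting the assumed separation.

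Next is \emph{one step of improvement}. Since $h$ is harmonic with $\|h\|_{L^\infty(B_{3/4})}\le2$, interior estimates give an affine function $\ell(x)=h(0)+\nabla h(0)\cdot x$ with $|h(0)|+|\nabla h(0)|\le C_0(n)$ and $|h(x)-\ell(x)|\le C_0(n)|x|^2$ on $B_{1/2}$; combined with the lemma, $\sup_{B_r}|u-\ell|\le\eta+C_0 r^2$ for $r\le1/2$. Now fix $\alpha\in(0,1)$, choose $r_0=r_0(\alpha,n)\in(0,1/2]$ so small that $C_0 r_0^2\le\tfrac12 r_0^{1+\alpha}$, and then choose $\eta=\eta(\alpha,n)\le\tfrac12 r_0^{1+\alpha}$; this fixes $\delta=\delta(\alpha,n)$ and $\varepsilon_0=\varepsilon_0(\alpha,n)$ via the lemma, and gives an affine $\ell$ with $|\ell(0)|+|D\ell|\le C_0$ and $\sup_{B_{r_0}}|u-\ell|\le r_0^{1+\alpha}$.

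For the \emph{iteration}, set $u_1(x)=\bigl(u(r_0 x)-\ell(r_0 x)\bigr)/r_0^{1+\alpha}$. Then $\|u_1\|_{L^\infty(B_1)}\le1$, and because $D^2 u_1(x)=r_0^{1-\alpha}(D^2u)(r_0 x)$ and an affine function has vanishing Hessian, $u_1$ solves \eqref{eq-main} with the same $\lambda,\Lambda$ and with $K$ replaced by $r_0^{1-\alpha}K\le K\le\varepsilon_0$ — this is the only place the restriction $\alpha<1$ enters. Hence the improvement step applies to $u_1$, and iterating yields affine functions $\ell_k(x)=a_k+b_k\cdot x$ with $\sup_{B_{r_0^k}}|u-\ell_k|\le r_0^{k(1+\alpha)}$, $|b_{k+1}-b_k|\le C_0 r_0^{k\alpha}$, $|a_{k+1}-a_k|\le C_0 r_0^{k(1+\alpha)}$. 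Therefore $b_k\to Du(0)$ with $|Du(0)|\le C(n)$, and $|u(x)-u(0)-Du(0)\cdot x|\le C(n)|x|^{1+\alpha}$ for $|x|\le1/2$. As the hypotheses are translation invariant, the same estimate holds at every point of $B_{1/2}$ with a uniform constant, which together with the bound on $u$ gives $u\in C^{1,\alpha}(\overline B_{1/2})$; undoing the normalizations finishes the proof, with $\varepsilon(\alpha,n):=\delta(\alpha,n)$.

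The main obstacle is the approximation lemma, and within it the requirement that the compactness be uniform as the ellipticity degenerates: one needs the Krylov--Safonov $C^{\alpha_0}$ estimate (and the ABP/Harnack machinery behind it) not to deteriorate as $\Lambda/\lambda\to1$, which is handled here by the monotonicity of the defining inequalities in $\Lambda$, and one needs the limit in the viscosity formulation to be genuinely harmonic, which follows from stability but should be checked with care. Once these are in place, the $C^{1,\alpha}$ iteration is the standard perturbative scheme and is essentially bookkeeping, the scaling of $K$ being automatically favorable thanks to $\alpha<1$.
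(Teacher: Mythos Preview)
Your argument is correct. Note, however, that this paper does not itself prove Theorem~\ref{thm-LY}: the result is quoted from \cite{LY24}, and the paper only remarks that the proof there ``strongly relies on the contradiction argument'' and is therefore non-quantitative. Your approximation lemma is precisely such a compactness/contradiction step, and the subsequent affine improvement-of-flatness with the rescaling $u_1(x)=r_0^{-(1+\alpha)}\bigl(u(r_0x)-\ell(r_0x)\bigr)$ is the standard perturbative iteration; the key observation that subtracting an affine function and rescaling keeps $u_1$ in $S(\lambda,\Lambda,r_0^{1-\alpha}K)$ is exactly what makes the scheme work for the Pucci inequalities and not only for an equation $F(D^2u)=0$. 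So your proof matches the scheme the paper attributes to \cite{LY24}.

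For context, the paper's own contribution goes in a different direction: it replaces the compactness step by the constructive Ishii--Lions doubling argument (Theorem~\ref{thm-holder}), yielding the explicit threshold $\Lambda/\lambda-1<2\sqrt{(1-\alpha)/(n-1)}$, but only for $C^\alpha$ regularity of functions in the solution class. The upgrade to $C^{1,\alpha}$ in this paper (Corollary~\ref{cor-fully}) proceeds via difference quotients (\cite[Corollary~5.7]{CC95}) and therefore applies only to solutions of a fixed equation $F(D^2u)=0$, not to the full class $S(\lambda,\Lambda,K)$. Thus your compactness proof recovers the full qualitative statement of Theorem~\ref{thm-LY}, while the paper's Ishii--Lions route trades that scope for an explicit $\varepsilon(\alpha,n)$.
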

The underlying idea of this result is that the Pucci extremal operators can be viewed as a small perturbation of the Laplacian operator $\Delta$ under the condition $\Lambda/\lambda \approx 1$. Nevertheless, since the proof of Theorem~\ref{thm-LY} strongly relies on the contradiction argument, the consequence in \cite{LY24} is still not quantitative in the sense that it does not provide an explicit relation between the H\"older exponent $\alpha$ and the ellipticity gap $\Lambda/\lambda-1$.
    
The first goal of this paper is to describe the dependence of the H\"older exponent, which appears in the Krylov--Safonov theory, on the ellipticity ratio in an explicit quantitative manner. 
\begin{theorem}[H\"older exponent bound in the Krylov--Safonov theory]\label{thm-holder}
	 Assume that $u \in C(B_1)$ is a viscosity solution of \eqref{eq-main} for some constant $K \geq 0$. Given $\alpha \in (0,1)$, if 
     \begin{equation*}
      \frac{\Lambda}{\lambda}-1 < 2\sqrt{\frac{1-\alpha}{n-1}},
    \end{equation*}
    then $u \in C^{\alpha}(\overline B_{1/2})$ with the uniform estimate
    \begin{equation*}
        \|u\|_{C^{\alpha}(\overline B_{1/2})} \leq C(\|u\|_{L^{\infty}(B_1)}+K),
    \end{equation*}
    where $C>0$ is a constant depending only on $n$, $\lambda$, $\Lambda$ and $\alpha$.
\end{theorem}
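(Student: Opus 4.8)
The plan is to run the Ishii--Lions doubling-of-variables argument, exploiting that when $\Lambda/\lambda$ is close to one the operators $\mathcal M^\pm_{\lambda,\Lambda}$ are small perturbations of $\lambda\Delta$. Since $-u$ also solves \eqref{eq-main}, it suffices to bound $u(x)-u(x_0)$ from above by $C(\|u\|_{L^\infty(B_1)}+K)|x-x_0|^\alpha$ for each fixed $x_0\in B_{1/2}$, with $C=C(n,\lambda,\Lambda,\alpha)$. Fix $L,M>0$ with $M\sim\|u\|_{L^\infty(B_1)}+K$ and $L$ large but universal, put
\[
\varphi(x,y)=L|x-y|^{\alpha}+M\bigl(|x-x_0|^2+|y-x_0|^2\bigr),
\]
and aim to show $\Phi(x,y):=u(x)-u(y)-\varphi(x,y)\le 0$ on $\overline B_1\times\overline B_1$; evaluating at $y=x_0$ then gives the pointwise bound. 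Once $M$ is large the penalization confines any positive maximum of $\Phi$ to an interior point $(\bar x,\bar y)$ well inside $B_1$, and since $\varphi(x,x)\ge 0$ one has $d:=|\bar x-\bar y|>0$; set $e=(\bar x-\bar y)/d$. Assume such a maximum exists, toward a contradiction.

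By the Jensen--Ishii lemma (theorem on sums) there exist $X,Y\in\mathcal S^n$ with $(D_x\varphi(\bar x,\bar y),X)\in\overline{J}^{2,+}u(\bar x)$, $(-D_y\varphi(\bar x,\bar y),Y)\in\overline{J}^{2,-}u(\bar y)$, and, up to a harmless arbitrarily small error,
\[
\begin{pmatrix}X&0\\0&-Y\end{pmatrix}\le D^2\varphi(\bar x,\bar y).
\]
With $P=\alpha(\alpha-1)d^{\alpha-2}\,e\otimes e+\alpha d^{\alpha-2}(I-e\otimes e)$ one has $D^2\bigl(L|x-y|^\alpha\bigr)=L\begin{pmatrix}P&-P\\-P&P\end{pmatrix}$, which vanishes on the diagonal space $\{(\xi,\xi)\}$, equals $2L\alpha(\alpha-1)d^{\alpha-2}$ (large negative) along $(e,-e)$, and equals $2L\alpha d^{\alpha-2}$ (large positive) on the $(n-1)$-dimensional tangential space $\{(\xi,-\xi):\xi\perp e\}$. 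Testing the matrix inequality against $(\xi,\xi)$, $(\xi,0)$ and $(e,-e)$ gives, with $c_n$ dimensional,
\[
X-Y\le c_nM\,I,\qquad \langle(X-Y)e,e\rangle\le 4L\alpha(\alpha-1)d^{\alpha-2}+c_nM,\qquad X\le LP+c_nM\,I,
\]
and, by the symmetry of $\varphi$, also $-Y\le LP+c_nM\,I$; hence $\text{tr}(X-Y)\le 4L\alpha(\alpha-1)d^{\alpha-2}+c_nM$, while $\text{tr}(X^+)$ and $\text{tr}(Y^-)$ are each at most $(n-1)\bigl(L\alpha d^{\alpha-2}+c_nM\bigr)$.

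Now insert the equations: $u$ being a subsolution of \eqref{eq-main} gives $\mathcal M^+_{\lambda,\Lambda}(X)\ge -K$ and $u$ being a supersolution gives $\mathcal M^-_{\lambda,\Lambda}(Y)\le K$. Writing $\mathcal M^+_{\lambda,\Lambda}(N)=\lambda\,\text{tr}\,N+(\Lambda-\lambda)\,\text{tr}\,N^+$ and $\mathcal M^-_{\lambda,\Lambda}(N)=\lambda\,\text{tr}\,N-(\Lambda-\lambda)\,\text{tr}\,N^-$ and subtracting,
\[
\text{tr}(X-Y)\ge -\Bigl(\frac{\Lambda}{\lambda}-1\Bigr)\bigl(\text{tr}(X^+)+\text{tr}(Y^-)\bigr)-\frac{2K}{\lambda}.
\]
Combining this with the trace bounds above produces an inequality whose leading part is $4L\alpha(1-\alpha)d^{\alpha-2}\le 2(\Lambda/\lambda-1)(n-1)L\alpha d^{\alpha-2}$ plus terms controlled by $M$ and $K$. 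If the coefficient of $L\alpha d^{\alpha-2}$ on the left strictly dominates that on the right, then choosing $L$ large enough forces $d$ to exceed the diameter of $B_1$, a contradiction; so $\Phi\le 0$, and the stated $C^\alpha$ estimate follows by letting $x_0$ vary (and applying the same bound to $-u$).

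The step I expect to be the main obstacle is extracting the sharp threshold. The crude bookkeeping above only needs $4(1-\alpha)>2(\Lambda/\lambda-1)(n-1)$, i.e. $\Lambda/\lambda-1<2(1-\alpha)/(n-1)$, since it charges all $n-1$ tangential directions against the single direction $e$ at full strength. Reaching $2\sqrt{(1-\alpha)/(n-1)}$ requires replacing $L|x-y|^\alpha$ by $L\phi(|x-y|)$ for a carefully chosen concave increasing profile with $\phi(s)\sim s^\alpha$ near $0$, together with a finer accounting of the eigenvalue contributions of $X$ and $-Y$ (using the equations to trade the large positive tangential eigenvalues against the large negative one), arranged so that $-\phi''(s)$ must only dominate a constant multiple of $(\Lambda/\lambda-1)^2(n-1)\,\phi'(s)/s$ rather than of $(\Lambda/\lambda-1)(n-1)\,\phi'(s)/s$; the square root is the signature of the resulting discriminant-type balance between the gain from the $(e,-e)$ direction and the loss from the tangential ones. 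Throughout one must keep $L$, $M$ and every constant dependent only on $n,\lambda,\Lambda,\alpha$ and $\|u\|_{L^\infty(B_1)}+K$.
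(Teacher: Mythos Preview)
Your framework is right but the proof is incomplete, and your diagnosis of the missing ingredient is off. You reach only the linear threshold $\Lambda/\lambda-1<2(1-\alpha)/(n-1)$ and then conjecture that the square root requires a refined profile $\phi$ in place of $|x-y|^\alpha$. The paper keeps the plain profile $\varphi(s)=s^\alpha$ throughout; the quadratic gain in $(\Lambda/\lambda-1)$ comes from a different algebraic route, not from a better test function.

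The missing idea is this: instead of subtracting the two viscosity inequalities to control $\mathrm{tr}(X-Y)$ directly, bound $\mathcal{M}^+_{\lambda,\Lambda}(X-Y)$ from above and from below. The upper bound uses the spectral information you already extracted (all eigenvalues $\le 6L_2$, one $\le 6L_2+4L_1\varphi''(\theta)$), giving $\mathcal{M}^+(X-Y)\le \Lambda(n-1)\cdot 6L_2+\lambda(6L_2+4L_1\varphi''(\theta))$. For the lower bound, write
\[
\mathcal{M}^+(X-Y)\ \ge\ \mathcal{M}^+(X)-\mathcal{M}^+(Y)\ =\ \bigl[\mathcal{M}^+(X)-\mathcal{M}^-(Y)\bigr]\ -\ \bigl[\mathcal{M}^+(Y)-\mathcal{M}^-(Y)\bigr].
\]
The first bracket is $\ge -2K$. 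The second bracket carries an explicit factor $(\Lambda-\lambda)$, and the quantity it multiplies is controlled using the \emph{supersolution} inequality $\mathcal{M}^-(Y)\le K$ a second time: from $\lambda\sum e_i^+(Y)\le \Lambda\sum e_i^-(Y)+K$ one bounds $\mathrm{tr}\,Y$ by $(\Lambda/\lambda-1)\sum e_i^-(Y)+K/\lambda$, and the negative eigenvalues of $Y$ are at most $(n-1)$ in number and each bounded by $L_1\varphi'(\theta)/\theta+3L_2$ (from the matrix inequality applied to $(0,\xi)$). This is where the second factor of $(\Lambda/\lambda-1)$ appears. Comparing the two bounds on $\mathcal{M}^+(X-Y)$ then yields the condition $-4\lambda\varphi''(\theta)>(\Lambda-\lambda)(n-1)(\Lambda/\lambda-1)\varphi'(\theta)/\theta$, i.e.\ $1-\alpha>\tfrac{n-1}{4}(\Lambda/\lambda-1)^2$, which is exactly the stated threshold. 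Your ``finer accounting'' intuition was correct, but the mechanism is using the equation on $Y$ twice rather than changing the profile.
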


	A few remarks on Theorem~\ref{thm-holder} are in order. First, the criterion becomes stronger as $\alpha\to 1^-$, which is consistent with the perturbative intuition that one needs $\Lambda/\lambda\to 1^+$ for better regularity of solutions. Second, Mooney~\cite{Moo19} captured the behavior of the H\"older exponent: $\alpha \sim c(n)^{(\Lambda/\lambda)^{n-1}}$ for some $c(n) \in (0,1)$. While this asymptotic has an advantage that yields the information for general ellipticity ratio $\Lambda/\lambda$, it does not guarantee that $\alpha \to 1^-$ as $\Lambda/\lambda \to 1^+$. We also refer to Armstrong--Silvestre--Smart~\cite{ASS12}, Le~\cite{Le20} and Nascimento--Teixeira~\cite{NT23} for related results. Moreover, Theorem~\ref{thm-holder} has several simple but useful corollaries regarding fully nonlinear equations (Corollary~\ref{cor-fully}) and normalized $p$-Laplace equations (Corollary~\ref{cor-normalized-p}). Finally, our method can be applied to deal with both elliptic and parabolic equations that hold only where the gradient is large, whose regularity theory was developed by Imbert--Silvestre~\cite{IS16} in the elliptic setting. To the best of our knowledge, the H\"older regularity for viscosity solutions of the corresponding parabolic equations is new in this paper; see Theorem~\ref{thm-parabolic-space} and Corollary~\ref{cor-parabolic-time}.

	The proof of Theorem~\ref{thm-holder} is based on the Ishii--Lions method~\cite{IL90}, which has played a crucial role in developing H\"older or Lipschitz regularity of viscosity solutions in different contexts. For instance, we refer to the works by Capuzzo Dolcetta--Leoni--Porretta~\cite{CDLP10} for degenerate equations with superlinear Hamiltonians, Imbert--Silvestre~\cite{IS13} for degenerate fully nonlinear equations and Biswas--Topp~\cite{BT25} for fractional $p$-Laplace equations. In our setting, we consider a functional of the form
	\begin{equation*}
	    \Phi(x, y) \coloneqq u(x)-u(y)-L_1|x-y|^\alpha - L_2|x|^2 - L_2|y|^2,
	\end{equation*}
	and argue by contradiction. At a maximum point $(x,y) \in \overline B_{1/2}\times \overline B_{1/2}$ of this functional, we apply the Ishii--Jensen's lemma~\cite{Ish89, Jen88} to find appropriate semijets $(q_x,X)$ at $x$ and $(q_y,Y)$ at $y$ together with a matrix inequality for $X$ and $Y$. Then we exploit the matrix inequality to obtain the spectral information on $X$, $Y$ and $X-Y$ in terms of $L_1$, $L_2$ and $\alpha$. The comparison between the Pucci extremal operators and the Laplacian operator finally gives the desired quantitative relation between $\Lambda/\lambda-1$ and $\alpha$.\newline

    The second part of this paper is concerned with the Evans--Krylov theory~\cite{Eva82, Kry82, Kry83}. We say that a fully nonlinear operator $F : \mathcal{S}^n \to \mathbb{R}$ is
    \begin{enumerate}[(i)]
        \item \emph{$(\lambda, \Lambda)$-elliptic} if 
    \begin{equation*}
        \mathcal{M}^-_{\lambda, \Lambda}(M-N) \leq F(M)-F(N) \leq \mathcal{M}^+_{\lambda, \Lambda}(M-N) \quad \text{for any $M, N \in \mathcal{S}^n$};
    \end{equation*}

    \item \emph{concave} if
    \begin{equation*}
        F(\theta M+(1-\theta)N) \geq \theta F(M)+(1-\theta)F(N) \quad \text{for any $M, N \in \mathcal{S}^n$ and $\theta \in [0,1]$}.
    \end{equation*}
    \end{enumerate}
    
    The classical Evans--Krylov theory reads as follows. The key ingredient of the proof is the application of the weak Harnack inequality to derive the oscillation decay of $D^2u$. 
	\begin{theorem}[\cite{Eva82,Kry82,Kry83}]\label{thm-EK}
        Let $F : \mathcal{S}^n \to \mathbb{R}$ be uniformly $(\lambda,\Lambda)$-elliptic and concave.
	    If $u\in C(B_1)$ is a viscosity solution of $F(D^2u)=0$ in $B_1$, then $u\in C^{2,\alpha}(\overline B_{1/2})$ for some $\alpha=\alpha(n,\lambda,\Lambda)\in(0,1)$.
	\end{theorem}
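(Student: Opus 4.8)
\emph{Proof proposal.} Since the statement is an interior estimate, I would first reduce to an a priori estimate: after mollifying $F$ (which preserves uniform $(\lambda,\Lambda)$-ellipticity and concavity, and for which classical solutions are automatically smooth), it suffices to show that a smooth solution of $F(D^2u)=0$ in $B_1$ satisfies $[D^2u]_{C^\alpha(\overline B_{1/2})}\le C(n,\lambda,\Lambda)\big(\|u\|_{L^\infty(B_1)}+|F(0)|\big)$ with $\alpha,C$ independent of the mollification, and then pass to the limit using uniqueness of viscosity solutions of the Dirichlet problem; one may instead run the entire argument with second-order difference quotients $\delta^e_hu\coloneqq u(\cdot+he)+u(\cdot-he)-2u(\cdot)$, which is what is actually needed for merely Lipschitz $F$. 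I record that $(\lambda,\Lambda)$-ellipticity turns $F(D^2u)=0$ into the Pucci inequalities $\mathcal M^-_{\lambda,\Lambda}(D^2u)\le|F(0)|$ and $\mathcal M^+_{\lambda,\Lambda}(D^2u)\ge-|F(0)|$, so Theorem~\ref{thm-KS} together with the interior $W^{2,\delta}$ estimate of the Krylov--Safonov theory is available; the real content is an oscillation-decay estimate for $D^2u$.

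\emph{Pillar one: concavity $\Rightarrow$ subsolution.} Fix a unit vector $e$ and set $w_e\coloneqq\partial_{ee}u$. Differentiating $F(D^2u)=0$ once gives $a_{ij}(x)\,\partial_{ij}(\partial_eu)=0$ with $a_{ij}(x)\coloneqq F_{ij}(D^2u(x))$, $\lambda I\le(a_{ij}(x))\le\Lambda I$; differentiating once more along $e$ gives
\[
  a_{ij}(x)\,\partial_{ij}w_e \;=\; -\,F_{ij,kl}(D^2u(x))\,\partial_{ije}u\,\partial_{kle}u \;\ge\; 0,
\]
the inequality being exactly concavity of $F$ (its second differential is negative semidefinite on $\mathcal S^n$). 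Hence \emph{every} pure second derivative $w_e$ is a subsolution of the \emph{same} linear uniformly elliptic operator $L\coloneqq a_{ij}(x)\,\partial_{ij}$, so at each scale the nonnegative function $M_e(2r)-w_e$ is an $L$-supersolution to which the weak Harnack inequality applies, where $M_e(\rho)\coloneqq\sup_{B_\rho}w_e$, $m_e(\rho)\coloneqq\inf_{B_\rho}w_e$.

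\emph{Pillar two: concavity $+$ equation $\Rightarrow$ coupling of directions.} For $x_0\in B_1$, concavity and $F(D^2u(x_0))=F(D^2u(x))=0$ give $A(x_0):\big(D^2u(x)-D^2u(x_0)\big)\ge0$ for all $x$, where $A(x_0)\coloneqq DF(D^2u(x_0))$ satisfies $\lambda I\le A(x_0)\le\Lambda I$; i.e.\ $x_0$ minimises $x\mapsto A(x_0):D^2u(x)$ on $B_1$. Next I would fix a finite symmetric set of unit vectors $e_1,\dots,e_N$, $N=N(n,\lambda,\Lambda)$, such that every $A$ with $\lambda I\le A\le\Lambda I$ admits $A=\sum_k\beta_k(A)\,e_k\otimes e_k$ with $\beta_k(A)\ge0$, $\sum_k\beta_k(A)\le C(n,\lambda,\Lambda)$, and $\max_k\beta_k(A)\ge c(n)\lambda>0$ (such matrices form a compact subset of the cone generated by the rank-one projections, so a sufficiently fine net of directions suffices). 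Since $A(x_0):D^2u=\sum_k\beta_k(A(x_0))\,w_{e_k}$, the point $x_0$ minimises $\sum_k\beta_k(A(x_0))\,w_{e_k}$; comparing its value at $x_0$ with its average over $B_r\ni x_0$ gives, for every $x_0\in B_{2r}$,
\[
  \sum_{k=1}^N\beta_k(A(x_0))\,\big(M_k(2r)-w_{e_k}(x_0)\big)\;\ge\;\sum_{k=1}^N\beta_k(A(x_0))\cdot\frac1{|B_r|}\int_{B_r}\big(M_k(2r)-w_{e_k}\big).
\]
This is the missing "supersolution-like" information: if $w_{e_k}$ lies well below $M_k(2r)$ on average over $B_r$ for every $k$, then $w_{e_k}(x_0)$ lies well below $M_k(2r)$ for at least one $k$ --- the one realising $\max_k\beta_k(A(x_0))\gtrsim1$.

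\emph{Pillar three: weak Harnack iteration and conclusion.} For each $k$, applying the weak Harnack inequality to the supersolution $M_k(2r)-w_{e_k}\ge0$ on $B_{2r}$ gives $\big(|B_r|^{-1}\|M_k(2r)-w_{e_k}\|_{L^{p_0}(B_r)}^{p_0}\big)^{1/p_0}\le C\big(M_k(2r)-M_k(r)\big)$ for some $p_0=p_0(n,\lambda,\Lambda)\in(0,1)$; inserting this into the coupling inequality of Pillar two (using subadditivity of $t\mapsto t^{p_0}$ and $\max_k\beta_k(A)\gtrsim1$) and running the usual Krylov--Safonov measure/dichotomy argument yields
\[
  \sum_{k=1}^N\mathrm{osc}_{B_{r/2}}\big(w_{e_k}\big)\;\le\;(1-\theta)\sum_{k=1}^N\mathrm{osc}_{B_{r}}\big(w_{e_k}\big),\qquad\theta=\theta(n,\lambda,\Lambda)\in(0,1),
\]
for all small $r$. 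Iterating gives $\sum_k\mathrm{osc}_{B_r}(w_{e_k})\le Cr^\alpha$ with $\alpha=\alpha(n,\lambda,\Lambda)\in(0,1)$; since the $e_k\otimes e_k$ span $\mathcal S^n$, every component of $D^2u$ inherits the same $C^\alpha$ modulus, and combining this with the interior $W^{2,\delta}$ estimate pins down $\|D^2u\|_{L^\infty(\overline B_{1/2})}$, so $u\in C^{2,\alpha}(\overline B_{1/2})$ with the asserted dependence (the dependence of $\alpha$ on $(\lambda,\Lambda)$ being only through the ratio). The main obstacle is precisely the interplay of Pillars two and three: each $w_e$ is \emph{only} a subsolution, so the lower bound required to run a two-sided oscillation estimate must be extracted from the equation itself, and making the bookkeeping of the finitely many directions, their weights $\beta_k$, and the ensuing measure estimates close the iteration uniformly in $r$ is the genuinely delicate part --- this is the Evans--Krylov insight; the reduction from Lipschitz $F$ via difference quotients (or mollification) is the other, more routine, technical point.
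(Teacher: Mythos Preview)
The paper does not actually prove Theorem~\ref{thm-EK}: it is stated as a classical background result with citations to \cite{Eva82,Kry82,Kry83}, accompanied only by the one-sentence remark that ``the key ingredient of the proof is the application of the weak Harnack inequality to derive the oscillation decay of $D^2u$.'' Your proposal is precisely a sketch of that classical argument (as in, e.g., \cite[Chapter~6]{CC95}): concavity makes each pure second derivative $\partial_{ee}u$ a subsolution of the linearised operator, the equation together with concavity supplies the coupling $DF(D^2u(x_0)):(D^2u(x)-D^2u(x_0))\ge 0$, a finite set of directions $\{e_k\}$ is chosen so that every admissible coefficient matrix decomposes into $\sum_k\beta_k e_k\otimes e_k$, and the weak Harnack inequality is iterated on $M_k-w_{e_k}$ to obtain geometric oscillation decay. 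As an outline this is correct and is exactly what the paper alludes to.

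It is worth noting, for contrast, that the paper's own contribution in Section~\ref{sec-EK} (Theorem~\ref{thm-c2alpha-general}) proceeds by a completely different route: rather than running the weak Harnack/oscillation-decay machine on $D^2u$, it mollifies $F$, uses the Bernstein technique (Proposition~\ref{prop-bernstein}) to get an explicit interior $C^2$ bound with constants tracked in $(n,\lambda,\Lambda)$, and then runs a Schauder-type perturbation argument by approximating $u$ with a harmonic function (Lemma~\ref{lem-approximation}) and iterating. That approach buys an explicit quantitative relation between $\alpha$ and $\Lambda/\lambda-1$, which the classical Evans--Krylov argument you outline does not readily yield; conversely, your outline works for arbitrary ellipticity ratio, whereas the paper's perturbative method requires $\Lambda/\lambda$ close to $1$.
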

	As in the Krylov--Safonov theory, Theorem~\ref{thm-EK} does not provide a quantitative information on the H\"older exponent $\alpha(n, \lambda, \Lambda)$. There are various attempts to replace the concavity assumption with a weaker one; we refer to Caffarelli--Yuan~\cite{CY00}, Savin~\cite{Sav07} and Huang~\cite{Hua19}, just to name a few. Recently, Wu--Niu~\cite{WN23} obtained an improved regularity of viscosity solutions of $F(D^2u)=0$ when the ellipticity ratio $\Lambda/\lambda$ is sufficiently close to one, without the concavity assumption on $F$.
    
	\begin{theorem}[\cite{WN23}]\label{thm-WN}
	    Let $F : \mathcal{S}^n \to \mathbb{R}$ be uniformly $(\lambda,\Lambda)$-elliptic. Given $\alpha\in(0,1)$, there exists $\varepsilon=\varepsilon(\alpha,n)>0$ such that if $|\Lambda/\lambda-1|<\varepsilon$, then any viscosity solution $u\in C(B_1)$ of $F(D^2u)=0$ in $B_1$ belongs to $C^{2,\alpha}(\overline B_{1/2})$. 
	\end{theorem}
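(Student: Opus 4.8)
The plan is to treat $F(D^2u)=0$ as a small perturbation of a Poisson equation and to run a Caffarelli-type improvement-of-flatness iteration. After dividing $F$ by $\lambda$ we may assume $\lambda=1$ and $\Lambda=1+\delta$ with $\delta=\Lambda/\lambda-1$; after subtracting $F(0)$ and scaling $u$ by a constant comparable to $\|u\|_{L^{\infty}(B_1)}+|F(0)|$, we may assume in addition that $\|u\|_{L^{\infty}(B_1)}\le 1$, that $F(0)=0$, and that $u$ solves $F(D^2u)=g_0$ for a constant $g_0$ with $|g_0|\le 1$. The elementary but crucial observation is that the Pucci bounds $\mathcal M^-_{1,1+\delta}(M)\le F(M)\le \mathcal M^+_{1,1+\delta}(M)$ together with $F(0)=0$ force $|F(M)-\operatorname{tr}M|\le C(n)\,\delta\,|M|$ for every $M\in\mathcal S^n$, and more generally that $M\mapsto F(M)-\operatorname{tr}M$ is Lipschitz with constant $C(n)\delta$; thus $F$ is genuinely close to the Laplacian when $\delta$ is small.

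First I would prove an approximation lemma: for every $\epsilon>0$ there is $\delta_0=\delta_0(\epsilon,n)>0$ such that whenever $0<\delta\le\delta_0$, $F$ is $(1,1+\delta)$-elliptic with $F(0)=0$, $|g|\le 1$, and $u$ is a viscosity solution of $F(D^2u)=g$ in $B_1$ with $\|u\|_{L^{\infty}(B_1)}\le 1$, there exists $h$ with $\Delta h=g$ in $B_{1/2}$ and $\|u-h\|_{L^{\infty}(B_{1/2})}\le\epsilon$. The proof is by compactness: along a would-be counterexample sequence $\delta_k\to 0$, Theorem~\ref{thm-KS} gives uniform $C^{\alpha}$ bounds for the $u_k$ on compact subsets, so $u_k\to u_\infty$ locally uniformly; the inequality $|F_k(M)-\operatorname{tr}M|\le C\delta_k|M|$ lets one pass to the limit in the viscosity inequalities and conclude that $\Delta u_\infty=g_\infty$ where $g_\infty=\lim g_k$; comparing $u_k$ on $B_{1/2}$ with the solution $w_k$ of $\Delta w_k=g_k$ with boundary values $u_\infty$ produces the contradiction.

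Next comes the iteration. Fix $\rho\in(0,1/4)$ small enough, depending only on $n$ and $\alpha$, that $C(n)\rho^{3}\le\tfrac12\rho^{2+\alpha}$ — this is where $\alpha<1$ enters — then set $\epsilon=\tfrac12\rho^{2+\alpha}$, let $\delta_0$ be the resulting threshold from the approximation lemma, and shrink it once more so that $C(n)\,\delta_0\,\rho^{-\alpha}\le\tfrac13$; the claimed $\varepsilon(\alpha,n)$ is this $\delta_0$. Inductively one keeps, at scale $\rho^k$, a normalized solution $u_k\in C(B_1)$, $\|u_k\|_{L^{\infty}(B_1)}\le 1$, of $F_k(D^2u_k)=g_k$ with $F_k$ $(1,1+\delta)$-elliptic, $F_k(0)=0$, $|g_k|\le 1$. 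The approximation lemma gives $h_k$ with $\Delta h_k=g_k$; its second-order Taylor polynomial $P_k$ at the origin satisfies $\operatorname{tr}D^2P_k=g_k$, $|D^2P_k|\le C(n)$, and, by the interior estimate for $\Delta h_k=g_k$ and the choice of $\rho,\epsilon$, $\|u_k-P_k\|_{L^{\infty}(B_\rho)}\le\rho^{2+\alpha}$. Setting $u_{k+1}(x)=\rho^{-(2+\alpha)}\bigl(u_k(\rho x)-P_k(\rho x)\bigr)$, $F_{k+1}(N)=\rho^{-\alpha}\bigl(F_k(\rho^{\alpha}N+D^2P_k)-F_k(D^2P_k)\bigr)$ and $g_{k+1}=\rho^{-\alpha}\bigl(g_k-F_k(D^2P_k)\bigr)$, one checks that $F_{k+1}$ is again $(1,1+\delta)$-elliptic with $F_{k+1}(0)=0$, that $u_{k+1}$ solves $F_{k+1}(D^2u_{k+1})=g_{k+1}$ in $B_1$ with $\|u_{k+1}\|_{L^{\infty}(B_1)}\le 1$, and — the key point — that $|g_{k+1}|\le \rho^{-\alpha}C(n)\delta(1+|g_k|)\le 1$, because $\operatorname{tr}D^2P_k=g_k$ yields $|F_k(D^2P_k)-g_k|\le C(n)\delta(1+|g_k|)$ and $\delta\le\delta_0$. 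Unwinding the rescalings produces quadratic polynomials $\bar P_k$ with $|D^j\bar P_{k+1}(0)-D^j\bar P_k(0)|\le C\rho^{k(2+\alpha-j)}$ for $j=0,1,2$ and $\|u-\bar P_k\|_{L^{\infty}(B_{\rho^k})}\le \rho^{k(2+\alpha)}$, so $\bar P_k\to\bar P_\infty$ and $u$ admits a $C^{2,\alpha}$ Taylor expansion at the origin; translating and rescaling the entire argument to an arbitrary $x_0\in\overline B_{1/2}$ gives $u\in C^{2,\alpha}(\overline B_{1/2})$, together with the estimate $\|u\|_{C^{2,\alpha}(\overline B_{1/2})}\le C(n,\alpha)(\|u\|_{L^{\infty}(B_1)}+\lambda^{-1}|F(0)|)$.

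I expect the main obstacle to be the absence of concavity: since $F$ need not be concave, Theorem~\ref{thm-EK} is unavailable and the frozen equation $F(D^2h)=0$ need not enjoy interior $C^{2,\alpha}$ estimates, so it cannot serve as the comparison problem — one is forced to compare with the Poisson equation $\Delta h=g_k$ instead. The price is the extra term $F_k(D^2P_k)$ appearing in $g_{k+1}$; controlling it hinges on the identity $\operatorname{tr}D^2P_k=g_k$ and on choosing the perturbation parameter $\delta$ only after the decay scale $\rho$ has been fixed, which is precisely what prevents the right-hand side constants $g_k$ from growing along the infinitely many steps of the iteration. Making this uniform boundedness of the $g_k$ rigorous, and bookkeeping the successive normalizations so that all constants depend only on $n$ and $\alpha$, is the technical heart of the argument.
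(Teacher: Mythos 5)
Theorem~\ref{thm-WN} is a cited result from \cite{WN23}; the paper states it but does not reproduce its proof, so there is no in-paper argument to match against directly. Your proposal is nonetheless a correct proof of the qualitative statement, and it is precisely the type of ``contradiction argument'' the paper attributes to \cite{WN23}: normalize $\lambda=1$, $F(0)=0$, so the Pucci bounds give $|F(M)-\operatorname{tr}M|\le C(n)\delta\|M\|$ with $\delta=\Lambda/\lambda-1$; prove an approximation lemma by compactness (Krylov--Safonov, Theorem~\ref{thm-KS}, supplies the uniform equicontinuity needed for Arzel\`a--Ascoli, Theorem~\ref{thm-stability} passes to the limit equation $\Delta u_\infty=g_\infty$, and a maximum-principle comparison with the Poisson replacement closes the contradiction); and run a Caffarelli improvement-of-flatness iteration. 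The delicate step you rightly flag---keeping $|g_k|\le 1$ across infinitely many rescalings---does check out: $\operatorname{tr}D^2P_k=g_k$ and the interior estimate $\|D^2P_k\|\le C(n)$ give $|g_{k+1}|=\rho^{-\alpha}|g_k-F_k(D^2P_k)|\le C(n)\delta\rho^{-\alpha}$, so choosing $\delta_0$ \emph{after} fixing $\rho$ keeps the forcing terms uniformly bounded. Where your route diverges from the paper's own quantitative counterpart, Theorem~\ref{thm-c2alpha-general}, is in the approximation lemma and in the hypotheses. The paper adds a concavity assumption on $F$ precisely so it can run the Bernstein technique (Lemma~\ref{lem-linearized} needs $Lu_{ee}\ge 0$), obtain an explicit interior $C^2$ bound (Proposition~\ref{prop-bernstein}), and thereby prove the approximation lemma with tracked constants (Lemma~\ref{lem-approximation}); this yields a completely explicit threshold on $\Lambda/\lambda-1$. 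Your compactness version needs no concavity---a real strength, and the reason Theorem~\ref{thm-WN} is stated without it---but gives up any explicit formula for $\varepsilon(\alpha,n)$, which is exactly the trade-off the paper highlights when motivating Theorem~\ref{thm-c2alpha-general}.
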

		
	The proof of Theorem~\ref{thm-WN} also relies on the contradiction argument, and so it does not yield an explicit relation between $\alpha$ and $\Lambda/\lambda-1$. Our second goal is to provide an explicit quantitative criterion for the Evans--Krylov theory.
    
	\begin{theorem}[H\"older exponent bound in the Evans--Krylov theory]\label{thm-c2alpha-general}
   Let $F : \mathcal{S}^n \to \mathbb{R}$ be $(\lambda, \Lambda)$-elliptic and concave with $F(0)=0$. Moreover, let $u \in C(B_2)$ be a viscosity solution of
\begin{equation*}
    F(D^2u)=0 \quad \text{in $B_2$}.
\end{equation*}
There exists a universal constant $A_0>0$ independent of $n$, $\lambda$, $\Lambda$ and $\alpha$ such that: given $\alpha \in (0,1)$, if
\begin{equation*}
    \frac{\Lambda}{\lambda}-1 < \frac{1}{n} \left(\frac{1}{A_0n^3}\right)^{\frac{2+\alpha}{1-\alpha}},
\end{equation*}
then $u\in C^{2,\alpha}(\overline B_{1/2})$ with the uniform estimate
\begin{equation*}
    \|u\|_{C^{2,\alpha}(\overline B_{1/2})} \leq C\|u\|_{L^\infty(B_2)},
\end{equation*}
where $C>0$ depends only on $n$, $\lambda$, $\Lambda$ and $\alpha$.
\end{theorem}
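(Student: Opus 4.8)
The plan is to run a Schauder-type perturbation argument, viewing the concave $(\lambda,\Lambda)$-elliptic operator $F$ as a small perturbation of a constant-coefficient linear operator when $\Lambda/\lambda-1$ is tiny. First I would record the quantitative version of Theorem~\ref{thm-holder}: since $u$ solves $F(D^2u)=0$ with $F(0)=0$, ellipticity gives $\mathcal M^-_{\lambda,\Lambda}(D^2u)\le 0\le \mathcal M^+_{\lambda,\Lambda}(D^2u)$, so $u\in C^{\alpha_0}$ for any fixed $\alpha_0$ (say $\alpha_0=1/2$) as soon as $\Lambda/\lambda-1<2\sqrt{(1-\alpha_0)/(n-1)}$, with an interior estimate $\|u\|_{C^{\alpha_0}}\le C\|u\|_{L^\infty}$. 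Then, differentiating the equation (or rather, using incremental quotients $u_h(x)=(u(x+he)-u(x))/|h|$, which are viscosity solutions of the linearized Pucci inequalities $\mathcal M^-_{\lambda,\Lambda}(D^2u_h)\le 0\le \mathcal M^+_{\lambda,\Lambda}(D^2u_h)$), the same theorem upgrades $u$ to $C^{1,\alpha_0}$, and one more iteration using the $C^{1,\alpha}$-version (Theorem~\ref{thm-LY} made quantitative via Theorem~\ref{thm-holder} applied to difference quotients of $Du$, together with concavity so that $D^2u$ solves a linear inequality) gives $u\in C^{1,1}$ locally with $\|u\|_{C^{1,1}}\le C\|u\|_{L^\infty}$; the constants here track powers of $n$ and the gap. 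The upshot of this first stage is that $D^2u$ exists a.e. and is bounded, and the pure second differences $\delta^2_h u$ satisfy the Pucci inequalities, so $w:=D_{ee}u$ is (after the $C^{1,1}$ bound legitimizes passing to the limit) a viscosity solution of $\mathcal M^-_{\lambda,\Lambda}(D^2w)\le 0\le \mathcal M^+_{\lambda,\Lambda}(D^2w)$.

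\medskip

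The core of the argument is an oscillation-decay/iteration on $D^2u$ at a point, in the spirit of the classical Evans--Krylov proof but made explicit. Fix a point, WLOG the origin, and consider the rescalings $u_r(x)=r^{-2}(u(rx)-u(0)-Du(0)\cdot rx)$, which solve $F_r(D^2u_r)=0$ for a sequence of operators $F_r$ with the same ellipticity and concavity. Let $S(r)$ denote the smallest closed convex set containing the (essential) range of $D^2u_r$ on $B_{1/2}$; concavity of $F$ means $D^2u$ is constrained to lie on (one side of) the level set $\{F=0\}$, which is a convex hypersurface, and the defining feature of Evans--Krylov is that oscillation of $D^2u$ forces it near a supporting hyperplane, i.e. some linear functional $\ell_e(M)=\langle M e,e\rangle$ (for a unit direction $e$ realizing the maximal oscillation) satisfies a pair of Pucci inequalities, and then $\sup_{B_{r/2}}\ell_e(D^2u)-\inf\,\ell_e(D^2u)$ decays by the weak Harnack inequality. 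The quantitative point is that the weak Harnack constant for the Pucci class can be made explicit when $\Lambda/\lambda\approx1$: here is exactly where Theorem~\ref{thm-holder} re-enters, because $\ell_e(D^2u)$ and $n\lambda^{-1}$-bounded combinations of it are $C^{\alpha}$ with $\alpha$ controlled by the gap, so I would replace the weak-Harnack oscillation-decay step by a direct $C^\alpha$-estimate on difference quotients of $\ell_e(D^2u)$: schematically, the second-difference function $\delta^2_{h,e}u/|h|^2$ is a Pucci subsolution/supersolution whose oscillation on $B_{1/2}$ is bounded by $\mathrm{osc}_{B_1}D^2u$, and Theorem~\ref{thm-holder} gives
\begin{equation*}
    \mathrm{osc}_{B_{r}} D^2 u \;\le\; C\, r^{\alpha}\,\mathrm{osc}_{B_{1}} D^2 u \;+\; (\text{error from }F\text{ not being linear}),
\end{equation*}
where the error is controlled by $(\Lambda/\lambda-1)$ times $\|D^2u\|_{L^\infty}$ because the measurable-coefficient linearization $a_{ij}(x)=\int_0^1 \partial F(tD^2u(x))\,dt$ has ellipticity ratio $\le\Lambda/\lambda$, hence is within $O(\Lambda/\lambda-1)$ of (a scalar multiple of) the identity.

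\medskip

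With that estimate in hand, the iteration is standard: choose $r_0=r_0(n,\Lambda/\lambda)\in(0,1)$ so that $C r_0^\alpha\le 1/2$ — this is where the explicit threshold $\frac{\Lambda}{\lambda}-1<\frac1n(\frac1{A_0 n^3})^{\frac{2+\alpha}{1-\alpha}}$ comes from, since $C$ depends polynomially on $n$ and on the weak-Harnack constant, and we need $r_0^\alpha$ to beat $C$ while $r_0$ itself is bounded below by a power of the gap; tracing the exponents, one gets $r_0\sim(\Lambda/\lambda-1)^{1/(2+\alpha)}$-type dependence and $C\sim A_0 n^3$-type dependence, whose balance is exactly the displayed inequality with the exponent $\frac{2+\alpha}{1-\alpha}$. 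Iterating $\mathrm{osc}_{B_{r_0^k}}D^2u\le 2^{-k}\mathrm{osc}_{B_1}D^2u + (\text{geometric error})$ and absorbing the error (which is summable and small precisely under the threshold) yields $\mathrm{osc}_{B_\rho}D^2u\le C\rho^\alpha(\|D^2u\|_{L^\infty}+\text{error})\le C\rho^\alpha\|u\|_{L^\infty(B_2)}$ after re-centering at an arbitrary point of $B_{1/2}$ and using the $C^{1,1}$ bound from the first stage; this is the $C^{2,\alpha}$ estimate.

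\medskip

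The main obstacle, as I see it, is making the constant $C$ in the $C^\alpha$-estimate for $D^2u$ (equivalently, the weak Harnack constant for the Pucci class $\mathcal M^\pm_{\lambda,\Lambda}$) explicit and with the right polynomial-in-$n$ dependence, uniformly as $\Lambda/\lambda\to1^+$ — the Ishii--Lions route behind Theorem~\ref{thm-holder} delivers the $C^\alpha$ norm but one must carefully quantify how the constant $C(n,\lambda,\Lambda,\alpha)$ there blows up as $\alpha\to1$ (since the second stage is run at a fixed sub-maximal $\alpha$, not at the endpoint), and track that through the iteration so that the final threshold is clean. A secondary technical point is justifying that $D^2u$ (which a priori only exists a.e. after the first stage) genuinely solves the Pucci inequalities in the viscosity sense so that Theorem~\ref{thm-holder} applies to it — this is handled by working throughout with second difference quotients $\delta^2_{h}u$, which are honest continuous viscosity solutions by concavity, applying the a priori estimate to them uniformly in $h$, and only passing to the limit $h\to0$ at the very end once the uniform $C^\alpha$ bound is established.
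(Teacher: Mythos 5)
Your approach is genuinely different from the paper's, but it has a gap at its core. You want to run an Evans--Krylov-style oscillation decay on $D^2u$, replacing the weak Harnack step by a direct application of Theorem~\ref{thm-holder} to second difference quotients of $u$. The problem is that for a concave $F$, the second difference quotient $w_h = (u(\cdot+he)-2u(\cdot)+u(\cdot-he))/h^2$ is only a Pucci \emph{subsolution}: concavity gives $F\bigl(\tfrac{D^2u(x+he)+D^2u(x-he)}{2}\bigr) \ge F(D^2u(x))$, which by ellipticity yields $\mathcal M^+_{\lambda,\Lambda}(D^2 w_h)\ge 0$, but there is no matching supersolution inequality $\mathcal M^-_{\lambda,\Lambda}(D^2 w_h)\le 0$. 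So $w_h\notin S(\lambda,\Lambda,0)$ and Theorem~\ref{thm-holder} does not apply to it. This one-sidedness is exactly why the classical Evans--Krylov proof must pass through the weak Harnack inequality applied to functions like $M_k-D_{ee}u$ rather than to $D_{ee}u$ directly, and why the oscillation-decay bookkeeping there is delicate. Your schematic estimate $\mathrm{osc}_{B_r}D^2u \le Cr^\alpha\,\mathrm{osc}_{B_1}D^2u + (\text{error})$ is therefore unjustified as written. The first stage of your plan (promoting $C^{1,\alpha_0}$ to a uniform $C^{1,1}$ bound) is also asserted without proof; for the same reason it cannot come from Theorem~\ref{thm-holder} applied to difference quotients of $Du$, and getting it with \emph{explicit} constants in $n,\Lambda/\lambda$ is the main technical content one needs. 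Finally, you never address the case of nonsmooth $F$, for which $D^2u$ need not be defined pointwise; some regularization of $F$ is needed before one can even differentiate the equation.

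The paper avoids both obstacles by not touching the weak Harnack inequality at all. It first mollifies $F$ to $F^\varepsilon$ (Remark~\ref{rmk-mollification}), then proves a quantitative interior $C^2$ bound for solutions of the smooth equation via the Bernstein technique (Lemma~\ref{lem-linearized} and Proposition~\ref{prop-bernstein}), which uses exactly the one-sided information $Lu\le0$, $Lu_e=0$, $Lu_{ee}\ge0$ together with the maximum principle, and yields explicit constants $C_1,C_2$ polynomial in $n$ and $\Lambda/\lambda$. With $\|D^2u\|_{L^\infty}$ under control, the smallness of $\Lambda/\lambda-1$ forces $|\Delta u|\le (\Lambda/\lambda-1)n\|D^2u\|$ to be small, so the harmonic replacement $h$ of $u$ in $B_{3/4}$ satisfies $\|u-h\|_{L^\infty}\le \varepsilon\|u\|_{L^\infty}$ (Lemma~\ref{lem-approximation}). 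Comparing $u$ to the quadratic Taylor polynomial of $h$ and iterating in the standard Schauder fashion then gives the pointwise $C^{2,\alpha}$ estimate with the displayed threshold, and the nonsmooth case follows by stability (Theorem~\ref{thm-stability}) and the global modulus-of-continuity estimate (Theorem~\ref{thm-global-holder}) applied to the regularized problems. This route trades the oscillation-decay step you envisaged for a perturbative comparison with the Laplacian, which is precisely the mechanism that makes the constants trackable when $\Lambda/\lambda\to 1^+$.
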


Let us provide some remarks on Theorem~\ref{thm-c2alpha-general}. First, an explicit but non-optimized candidate for a universal constant $A_0$ can be found in Remark~\ref{rmk-constant-A}. Second, we can observe that the smallness requirement on $|\Lambda/\lambda-1|$ in Theorem~\ref{thm-c2alpha-general} is much stronger than the one in Theorem~\ref{thm-holder}. Finally, for the plane case (i.e., $n=2$), Goffi~\cite{Gof25} provided an explicit H\"older exponent $\alpha$ depending on $\Lambda/\lambda$. In fact, his result can be understood as the quantitative counterpart of the Nirenberg's result~\cite{Nir53} regarding $C^{2, \alpha}$ regularity of viscosity solutions of $F(D^2u)=0$ in $n=2$, without the concavity assumption on $F$.

	The proof of Theorem~\ref{thm-c2alpha-general} consists of several steps. We first regularize a fully nonlinear operator $F$ by considering its mollification $F^{\varepsilon}$. Then we are able to employ the Bernstein technique for the approximating solutions $u^{\varepsilon}$. At this stage, we derive interior $C^2$ estimate for $u^{\varepsilon}$ with tracking down the dependence of uniform constants on $n$, $\lambda$ and $\Lambda$. Finally, the interior estimate allows us to use the Schauder-type iteration argument based on the perturbation method. In other words, we show the approximation lemma that ensures the existence of a harmonic function close to $u^{\varepsilon}$, and then apply the standard iteration argument to upgrade such an approximation to the desired $C^{2, \alpha}$ estimate.\newline
	
	The paper is organized as follows. In Section~\ref{sec-preliminary}, we collect several definitions and preliminary results including viscosity solutions and the Ishii--Jensen's lemma. In Section~\ref{sec-KS}, we prove Theorem~\ref{thm-holder} by using the Ishii--Lions method and present several consequences for fully nonlinear equations, normalized $p$-Laplace equations and equations that hold only where the gradient is large. In Section~\ref{sec-EK}, we prove Theorem~\ref{thm-c2alpha-general} via a Schauder-type perturbation argument together with the interior estimates coming from the Bernstein technique.

\section{Preliminaries}\label{sec-preliminary}
We first display notations that will be used throughout the paper.
\begin{itemize}
    \item $B_r(x_0) \coloneqq \{x \in \mathbb{R}^n : |x-x_0|<r\}$. In particular, $B_1=B_1(0)$ is the unit ball in $\mathbb{R}^n$.

    \item $Q_r(x_0, t_0) \coloneqq B_r(x_0) \times [t_0-r^2, t_0)$.

    \item We say that $u \in C(B_1)$ belongs to the \emph{solution class} $S(\lambda, \Lambda, K)$ in $B_1$ for constants $0<\lambda \leq \Lambda$ and $K \geq 0$, if $u \in C(B_1)$ is a viscosity solution of \eqref{eq-main}.

    \item (Matrix norms) For $M \in \mathcal{S}^n$, $\|M\|$ denotes the $(L^2, L^2)$-norm of $M$, that is, 
    \begin{equation*}
        \|M\|=\sup_{|x|=1}|Mx|.
    \end{equation*}
    We note that
    \begin{equation*}
        \max_{i, j} |M_{ij}| \leq \|M\| \leq \|M\|_F \coloneqq \left(\sum_{i, j} M_{ij}^2\right)^{1/2} \leq \sqrt{n}\|M\|.
    \end{equation*}
\end{itemize}

\begin{definition}[Viscosity solutions]
	Let $u$ and $f$ be continuous functions defined in $B_1$.
	\begin{enumerate} [(i)]
		\item $u$ is called a \textit{viscosity subsolution} of $F(D^2u)=f$ in $B_1$ when the following condition holds: if for any $x_0 \in B_1$ and any smooth function $\varphi$ touching $u$ from above at $x_0$, then
		\begin{equation*}
			F(D^2\varphi(x_0)) \geq f(x_0).
		\end{equation*}
        
		\item $u$ is called a \textit{viscosity supersolution} of $F(D^2u)=f$ in $B_1$ when the following condition holds: if for any $x_0 \in B_1$ and any smooth function $\varphi$ touching $u$ from below at $x_0$, then
		\begin{equation*}
			F(D^2\varphi(x_0)) \leq f(x_0).
		\end{equation*}
	\end{enumerate}
    In a similar way, we can define viscosity solutions under more general situations (including parabolic ones); see \cite{CIL92, CC95} for details. 
\end{definition}

In order to use the Ishii--Lions method in Section~\ref{sec-KS}, we need to recall the Ishii-Jensen's lemma~\cite{Ish89, Jen88} that was originally developed to prove the uniqueness of viscosity solutions to fully nonlinear equations. Let us first display the elliptic version of Ishii--Jensen's lemma {\cite[Theorem~3.2]{CIL92}}. The definition of semijets $\mathcal{J}^{2, \pm}_{\mathcal{O}}$ and $\overline{\mathcal{J}}^{2, \pm}_{\mathcal{O}}$ can be found in \cite[Section~2]{CIL92}.
\begin{theorem}[Ishii-Jensen's lemma; elliptic]\label{lem-ij-elliptic}
    Let $\mathcal{O}_i$ be a locally compact subset of $\mathbb{R}^{N_i}$ for $i=1, \cdots, k$, 
    \begin{equation*}
        \mathcal{O}=\mathcal{O}_1 \times \cdots \times \mathcal{O}_k,
    \end{equation*}
    $u_i$ be an upper semicontinuous function in $\mathcal{O}_i$, and $\varphi$ be twice continuously differentiable in a neighborhood of $\mathcal{O}$. Set
    \begin{equation*}
        w(x)=u_1(x_1)+\cdots+u_k(x_k) \quad \text{for $x=(x_1, \cdots, x_k) \in \mathcal{O}$},
    \end{equation*}
 and suppose $\hat x=(\hat x_1, \cdots, \hat x_k) \in \mathcal{O}$ is a local maximum of $w-\varphi$ relative to $\mathcal{O}$.
 
Then for each $\varepsilon>0$, there exist $X_i \in \mathcal{S}^{N_i}$ such that
	\begin{enumerate}[(i)]
		\item $(D_{x_i}\varphi(\hat{x}), X_i) \in \overline{\mathcal{J}}^{2, +}_{\mathcal{O}_i}u_i(\hat{x}_i)$ for $i=1, \cdots, k$,
		\item 
		$
		-\left(\dfrac{1}{\varepsilon}+\|A\|\right)I \leq 
		\begin{pmatrix}
			X_1 & \cdots & 0 \\
			\vdots & \ddots & \vdots \\
			0 & \cdots & X_k
		\end{pmatrix}
		\leq A+\varepsilon A^2$,
	\end{enumerate}
	where $A=D^2\varphi(\hat x) \in \mathcal{S}^N$ for $N=N_1+\cdots+N_k$.
\end{theorem}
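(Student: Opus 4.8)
The plan is to prove the lemma by the sup-convolution regularization method, reducing all the second-order information on the $u_i$ to genuine pointwise Taylor expansions via Alexandrov's theorem, with Jensen's lemma supplying enough points of twice differentiability. As a preliminary step I would perform the standard reductions: subtracting from each $u_i$ the affine function $x_i \mapsto D_{x_i}\varphi(\hat x)\cdot(x_i-\hat x_i)$ and from $\varphi$ its affine part, and translating so that $\hat x = 0$, we may assume $D\varphi(0)=0$ and $\varphi(x)=\tfrac12\langle Ax,x\rangle + o(|x|^2)$ with $w-\varphi$ attaining a local maximum at $0$; adding $|x|^4$ renders this maximum strict without altering the first- and second-order data at $0$. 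It then suffices to treat the case where $\varphi(x)=\tfrac12\langle Ax,x\rangle$ is an exact quadratic, since the $o(|x|^2)$ error is absorbed by replacing $A$ with $A+\delta I$ and letting $\delta\to 0^+$ at the end, using the compactness of the matrices $X_i$ produced below together with the slack provided by the $\|A\|$ term in the lower bound.

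Next, for a small parameter $\mu>0$ I would introduce the sup-convolutions
\[
\hat u_i(x_i) = \sup_{y}\Bigl\{u_i(y) - \tfrac1{2\mu}|x_i - y|^2\Bigr\},
\]
which are semiconvex (that is, $\hat u_i + \tfrac1{2\mu}|\cdot|^2$ is convex), satisfy $\hat u_i \ge u_i$ and $\hat u_i \to u_i$ pointwise as $\mu\to 0^+$, and, by Alexandrov's theorem, are twice differentiable a.e. A routine stability argument shows that $\hat w(x):=\hat u_1(x_1)+\cdots+\hat u_k(x_k)$ has a strict local maximum of $\hat w-\varphi$ at some point $\hat x^\mu\to 0$. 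The key structural fact is the sup-convolution identity: at a point $x$ where $\hat u_i$ is twice differentiable, with Hessian $Y_i$, one has $Y_i\ge -\tfrac1\mu I$, and, writing $x_i^\ast$ for the associated touching point for $u_i$ (so $x_i^\ast\to x$ as $\mu\to 0$), the pair $(D\hat u_i(x), X_i)$ with $X_i = Y_i(I-\mu Y_i)^{-1}$ belongs to $\overline{\mathcal J}^{2,+}_{\mathcal O_i} u_i(x_i^\ast)$ whenever $I - \mu Y_i > 0$.

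Now I would invoke Jensen's lemma. Since $\hat w-\varphi$ is semiconvex with a strict maximum at $\hat x^\mu$, for every small vector $b=(b_1,\dots,b_k)$ the perturbed function $\hat w(x)-\varphi(x)+\langle b,x\rangle$ attains an interior local maximum at some $x_b$ near $\hat x^\mu$, and the set $\{x_b : |b|<r\}$ has positive Lebesgue measure; hence one can choose $b^{(m)}\to 0$ with each $x_{b^{(m)}}$ a point of twice differentiability of $\hat w$ and $x_{b^{(m)}}\to\hat x^\mu$. There the first- and second-order conditions read $D\hat u_i(x_{b^{(m)},i})=D_{x_i}\varphi(x_{b^{(m)}})-b^{(m)}_i$ and $Y_1^{(m)}\oplus\cdots\oplus Y_k^{(m)}\le D^2\varphi(x_{b^{(m)}})=A$, where $Y_i^{(m)}$ is the Hessian of $\hat u_i$ at $x_{b^{(m)},i}$; testing the matrix inequality against vectors supported in a single block gives $-\tfrac1\mu I\le Y_i^{(m)}\le\|A\|\,I$, so along a subsequence $Y_i^{(m)}\to Y_i$ with $Y_1\oplus\cdots\oplus Y_k\le A$. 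Setting $X_i=Y_i(I-\mu Y_i)^{-1}$ and passing to the limit, the closure property of $\overline{\mathcal J}^{2,+}_{\mathcal O_i}$ along the touching points yields $(D_{x_i}\varphi(\hat x),X_i)\in\overline{\mathcal J}^{2,+}_{\mathcal O_i} u_i(\hat x_i)$, which is (i). For (ii), the scalar map $t\mapsto t(1-\mu t)^{-1}=-\tfrac1\mu+\tfrac1\mu(1-\mu t)^{-1}$ is operator monotone increasing on $\{t<1/\mu\}$, so $Y_1\oplus\cdots\oplus Y_k\le A$ forces $X_1\oplus\cdots\oplus X_k\le A(I-\mu A)^{-1}$, while $Y_i\ge-\tfrac1\mu I$ forces $X_1\oplus\cdots\oplus X_k\ge-\tfrac1{2\mu}I$; a choice $\mu=\mu(\varepsilon,\|A\|)$ of order $\varepsilon/(1+\varepsilon\|A\|)$ then makes $A(I-\mu A)^{-1}\le A+\varepsilon A^2$ and $-\tfrac1{2\mu}\ge-(\tfrac1\varepsilon+\|A\|)$ by an elementary eigenvalue comparison, and letting $\delta\to0^+$ recovers the general $C^2$ case.

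The main obstacle is Jensen's lemma itself: the statement that the maximum points of the linear perturbations sweep out a set of positive measure is genuinely nontrivial, and its proof relies once more on Alexandrov's theorem together with an area-formula (or topological degree) argument showing that the relevant approximate subdifferential map covers a full neighborhood of the origin. A secondary, purely quantitative difficulty is obtaining the precise constants $\tfrac1\varepsilon+\|A\|$ and $A+\varepsilon A^2$ rather than unspecified constants depending on $\varepsilon$ and $\|A\|$; this is handled by the explicit eigenvalue estimates for $t\mapsto t(1-\mu t)^{-1}$ and the corresponding calibrated choice of $\mu$ indicated above.
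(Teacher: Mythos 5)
Your overall scheme (sup-convolution, Alexandrov, Jensen's lemma) is the classical one — the paper itself does not prove this statement but quotes it as Theorem~3.2 of [CIL92] — but as written there is a genuine gap at the point where the fixed quantitative bounds in (ii) must be reconciled with the jet membership in (i) \emph{at the point $\hat x$ itself}. In your argument the parameter $\mu$ must stay fixed (of order $\varepsilon/(1+\varepsilon\|A\|)$), since your lower bound is $-\tfrac1{2\mu}I$. But for fixed $\mu$ the maximum point $\hat x^{\mu}$ of $\hat w-\varphi$ is in general \emph{not} at $\hat x$, only near it: e.g.\ for $k=1$, $u(x)=\tfrac12 a x^2$ with $a<0$ and $\varphi(x)=\tfrac12 ax^2+|x|^4$, one has $\hat u(\xi)-\varphi(\xi)=\tfrac{\mu a^2}{2(1-\mu a)}\xi^2-|\xi|^4$, whose maximum sits at distance $\sim\sqrt{\mu}\,|a|$ from the origin. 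Consequently the Jensen points converge (as $m\to\infty$) to $\hat x^{\mu}$, the slopes $D\hat u_i$ converge to $D_{x_i}\varphi(\hat x^{\mu})\neq D_{x_i}\varphi(\hat x)$, and the touching points converge to $\hat x_i^{\mu}+\mu D\hat u_i(\hat x_i^{\mu})\neq\hat x_i$; moreover membership in the closure $\overline{\mathcal J}^{2,+}_{\mathcal O_i}u_i(\hat x_i)$ also requires $u_i(\text{touching points})\to u_i(\hat x_i)$, which you do not obtain. So your limit produces elements of $\overline{\mathcal J}^{2,+}u_i$ at points other than $\hat x_i$, with the wrong first component. Sending $\mu\to0$ would repair (i) but destroys the lower bound in (ii); keeping $\mu$ fixed destroys (i). The operator-monotone transformation $Y\mapsto Y(I-\mu Y)^{-1}$ does not help here (indeed it is redundant: the magic property already passes the untransformed Hessian to a jet of $u_i$ at the touching point).

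The missing idea, which is exactly how [CIL92] obtains the constants $\tfrac1\varepsilon+\|A\|$ and $A+\varepsilon A^2$, is to compare the sup-convolution not with $\varphi$ but with the \emph{inflated} quadratic: from the elementary inequality
\begin{equation*}
\langle Ax,x\rangle \le \langle (A+\varepsilon A^2)\xi,\xi\rangle+\Bigl(\tfrac1\varepsilon+\|A\|\Bigr)|x-\xi|^2 \qquad (x,\xi\in\mathbb{R}^N),
\end{equation*}
the local maximum condition $w(x)\le w(\hat x)+\tfrac12\langle Ax,x\rangle$ shows that the sup-convolution with the \emph{matched} parameter $\lambda=\tfrac1\varepsilon+\|A\|$ satisfies $\hat w(\xi)-\tfrac12\langle(A+\varepsilon A^2)\xi,\xi\rangle\le w(\hat x)$ with equality at $\xi=\hat x$; thus the regularized problem has its maximum \emph{exactly} at $\hat x$, Jensen's lemma is applied there, the margin $\lambda>\|A\|$ forces the touching points to converge to $\hat x$ together with the $u_i$-values, and the second-order condition directly yields the upper bound $A+\varepsilon A^2$ while semiconvexity gives $-\lambda I$. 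In other words, the two constants in (ii) must be built into the penalization before Jensen's lemma is invoked, not recovered afterwards by the eigenvalue calibration $\mu=\mu(\varepsilon,\|A\|)$ as you propose. Your preliminary reductions, the use of Jensen's lemma, and the final $\delta\to0^+$ limit are otherwise in line with the standard argument.
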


Moreover, the parabolic analogue of such a lemma can be found in \cite[Theorem~8.3]{CIL92} and \cite[Lemma~2.3.30]{IS13b}.
\begin{theorem}[Ishii-Jensen's lemma; parabolic]\label{lem-ij-parabolic}
Let $u_i$ be an upper semicontinuous function in $\mathcal{O}_i \times (0, T)$ for $i=1, \cdots, k$, where $\mathcal{O}_i$ is a locally compact subset of $\mathbb{R}^{N_i}$. Let $\varphi$ be defined on an open neighborhood of $\mathcal{O}_1 \times \cdots \times \mathcal{O}_k \times (0, T)$ and such that $(x_1, \cdots, x_k, t) \mapsto \varphi(x_1, \cdots, x_k, t)$ is once continuously differentiable in $t$ and twice continuously differentiable in $(x_1, \cdots, x_k) \in \mathcal{O}_1 \times \cdots \times \mathcal{O}_k$. Suppose that 
\begin{equation*}
    w(x_1, \cdots, x_k, t) \coloneqq u_1(x_1, t)+\cdots+u_k(x_k, t)-\varphi(x_1, \cdots, x_k, t)
\end{equation*}
attains a local maximum at $(\hat{x}_1, \cdots, \hat{x}_k, \hat{t}) \in \mathcal{O}_1\times \cdots \mathcal{O}_k \times (0, T)$. Assume, moreover, that there exists an $r>0$ such that for every $M>0$ there is a constant $C$ such that for $i=1, \cdots, k$,
 \begin{align*}
 	&b_i \leq C \quad \text{whenever} \quad (b_i, q_i, X_i) \in \mathcal{P}_{\mathcal{O}_i}^{2, +}u_i(x_i, t),\\
 	&|x_i-\hat{x}_i|+|t-\hat{t}|\leq r, \quad \text{and} \quad |u_i(x_i, t)|+|q_i|+\|X_i\| \leq M.
 \end{align*}
Then for each $\varepsilon>0$, there exist $X_i \in \mathcal{S}^{N_i}$ such that
	\begin{enumerate}[(i)]
		\item $(b_i, D_{x_i}\varphi(\hat{x}_1, \cdots \hat{x}_k, \hat{t}), X_i)) \in \overline{\mathcal{P}}_{\mathcal{O}_i}^{2, +}u_i(\hat{x}_i, \hat{t})$ for $i=1, \cdots, k$,
		\item 
		$
		-\left(\dfrac{1}{\varepsilon}+\|A\|\right)I \leq 
		\begin{pmatrix}
			X_1 & \cdots & 0 \\
			\vdots & \ddots & \vdots \\
			0 & \cdots & X_k
		\end{pmatrix}
		\leq A+\varepsilon A^2,$
		\item $b_1+\cdots+b_k=\varphi_t(\hat{x}_1, \cdots \hat{x}_k, \hat{t})$,
	\end{enumerate}
	where $A=(D_x^2\varphi)(\hat{x}_1, \cdots, \hat{x}_k, \hat{t})$.
\end{theorem}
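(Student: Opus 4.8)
The plan is to follow the Schauder-type perturbation scheme sketched in the introduction, carefully tracking the dependence of all constants on $n$, $\lambda$, $\Lambda$. The argument has three stages: (1) mollify $F$ and get a priori $C^2$ estimates for smooth solutions with explicit constants; (2) prove a one-step approximation lemma that says $u$ is close in $B_1$ to a harmonic function (hence to its second-order Taylor polynomial, using the classical derivative estimates for harmonic functions); (3) iterate the approximation at dyadic scales to produce a limiting second-order polynomial $P$ at each point with the $C^{2,\alpha}$ decay $|u(x)-P_{x_0}(x-x_0)|\le C|x-x_0|^{2+\alpha}$. The ellipticity-closeness hypothesis enters only in stage (2), where I must quantify how far a solution of $F(D^2u)=0$ is from a harmonic function in terms of $\Lambda/\lambda-1$.

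First I would regularize: set $F^\varepsilon = F*\rho_\varepsilon$ (mollification on $\mathcal S^n$), which is smooth, still $(\lambda,\Lambda)$-elliptic, concave, with $F^\varepsilon(0)\to 0$. Solving the Dirichlet problem with smooth data gives smooth solutions $u^\varepsilon$; by Krylov--Safonov and $C^{1,\alpha}$/$C^{2,\alpha}$ interior estimates for smooth concave equations (or directly by the Bernstein technique: differentiate the equation twice, use concavity to get a subsolution inequality for $|D^2 u^\varepsilon|^2$, then apply the maximum principle after cutting off with $|Du^\varepsilon|^2$) I obtain $\|u^\varepsilon\|_{C^2(\overline B_{3/2})}\le C_0\|u^\varepsilon\|_{L^\infty(B_2)}$ with $C_0=C_0(n,\lambda,\Lambda)$ explicit. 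The point of the Bernstein computation is to get $C_0$ polynomial in $n$ and in $\Lambda/\lambda$, so that the final threshold on $\Lambda/\lambda-1$ can be made explicit. I would then pass $\varepsilon\to 0$ locally uniformly to recover the same bound for $u$; alternatively, one keeps $u^\varepsilon$ throughout the iteration and only passes to the limit at the end.

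Next, the approximation lemma: there is a universal $A_0$ such that if $u$ solves $F(D^2u)=0$ in $B_1$, $\|u\|_{L^\infty(B_1)}\le 1$, and $\Lambda/\lambda-1\le\delta$, then there is a harmonic $h$ in $B_{1/2}$ with $\|u-h\|_{L^\infty(B_{1/2})}\le C_1(n)\,\delta^{1/2}$ (or some fixed power of $\delta$). The proof is: $F(D^2u)=0$ together with $F$ being $(\lambda,\Lambda)$-elliptic and $F(0)=0$ forces $\mathcal M^-_{\lambda,\Lambda}(D^2u)\le 0\le \mathcal M^+_{\lambda,\Lambda}(D^2u)$, i.e. $u\in S(\lambda,\Lambda,0)$, and when $\Lambda/\lambda\approx 1$ the Pucci operators squeeze down to $\lambda^{-1}\Delta$; quantitatively $|\mathrm{tr}(AM)-\lambda\,\mathrm{tr}(M)|\le n(\Lambda-\lambda)\|M\|$ for $\lambda I\le A\le\Lambda I$, so $u$ is a viscosity sub/supersolution of $\Delta u=\pm C(n)\delta\|D^2u\|$; combined with the $C^2$ bound from stage (1) this gives $|\Delta u|\le C(n)\delta$ in the viscosity sense, and solving $\Delta h=0$ with $h=u$ on $\partial B_{1/2}$, the maximum principle (or the $W^{2,p}$/ABP estimate) yields $\|u-h\|_\infty\le C(n)\delta$. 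One then replaces $h$ by its quadratic Taylor polynomial $P$ at the center, losing only $\|h\|_\infty\cdot(1/2)^3\le C$ from higher-order terms, and rescales. Iterating: at step $k$ one finds a quadratic $P_k$ with $\|u-P_k\|_{L^\infty(B_{r^k})}\le r^{k(2+\alpha)}$, where $r=r(\alpha)<1$ and the geometric series for the coefficients of $P_k$ converges precisely when $\delta$ is smaller than $r^{?}$ raised to a power like $(2+\alpha)/(1-\alpha)$ — this is exactly the exponent appearing in the stated threshold $\tfrac1n(A_0^{-1}n^{-3})^{(2+\alpha)/(1-\alpha)}$.

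The main obstacle I expect is stage (1): extracting a \emph{fully explicit} interior $C^2$ bound $C_0$ with controlled dependence on $\Lambda/\lambda$ via the Bernstein technique, since the naive Krylov--Safonov/Evans--Krylov constants are non-explicit. Concretely, after differentiating $F^\varepsilon(D^2u^\varepsilon)=0$ twice in direction $e$, concavity gives $F^\varepsilon_{ij}\,\partial_{ij}(u^\varepsilon_{ee})\ge F^\varepsilon_{ij,kl}\,\partial_{ij}u^\varepsilon_e\,\partial_{kl}u^\varepsilon_e\cdot(-1)\ge ?$, wait—concavity makes the Hessian-of-$F$ term nonpositive, so summing over $e$ one gets $F^\varepsilon_{ij}\,\partial_{ij}(\Delta u^\varepsilon)\ge 0$, and then one runs the maximum principle for $w=\eta^2|D^2u^\varepsilon|^2 + \mu|Du^\varepsilon|^2$ with a cutoff $\eta$, absorbing gradient terms using the $(\lambda,\Lambda)$-ellipticity of $(F^\varepsilon_{ij})$; the resulting constant is polynomial in $n$ and in $\Lambda/\lambda$, giving the $n^3$ and the $1/n$ prefactor in the theorem. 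The bookkeeping here, plus making sure the approximation-lemma constant $C_1(n)$ and the iteration radius $r(\alpha)$ combine to the clean exponent $(2+\alpha)/(1-\alpha)$, is where the real work lies; the iteration itself is standard once the approximation lemma is in hand.
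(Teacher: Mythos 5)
Your proposal does not address the statement at hand. The statement is the parabolic Ishii--Jensen lemma (Theorem~\ref{lem-ij-parabolic}): a structural result about semijets of sums of upper semicontinuous functions at a local maximum, asserting the existence of matrices $X_i$ with the block-matrix inequality $-\left(\varepsilon^{-1}+\|A\|\right)I \leq \mathrm{diag}(X_1,\cdots,X_k) \leq A+\varepsilon A^2$ and the time-derivative identity $b_1+\cdots+b_k=\varphi_t$. What you have written instead is an outline of the proof of the quantitative Evans--Krylov result (Theorem~\ref{thm-c2alpha-general}): mollification of $F$, Bernstein-type interior $C^2$ estimates, approximation by a harmonic replacement, and a Schauder iteration. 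None of these ingredients bears on the lemma in question, so as a proof of Theorem~\ref{lem-ij-parabolic} the proposal is vacuous rather than merely incomplete.

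For the record, this lemma is not proved in the paper at all; it is quoted as a known preliminary from \cite{CIL92} (Theorem~8.3 there) and \cite{IS13b}, and is later used as a black box in the Ishii--Lions arguments of Section~\ref{sec-KS}. A genuine proof would proceed along entirely different lines from your sketch: one regularizes the $u_i$ by sup-convolution (in space and time), invokes Jensen's lemma together with Alexandrov's theorem on twice-differentiability of semiconvex functions to produce points of second-order differentiability near the maximum, passes to limits to obtain elements of the closed parabolic semijets $\overline{\mathcal{P}}^{2,+}_{\mathcal{O}_i}u_i(\hat{x}_i,\hat{t})$, and uses the stated one-sided bound on the $b_i$ precisely to control the time components and extract the identity in item (iii); the matrix inequality in item (ii) comes from the elliptic theorem on sums applied at the regularized level. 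If your intent was to prove Theorem~\ref{thm-c2alpha-general}, your outline is broadly aligned with the paper's strategy for that theorem, but it should be submitted against that statement, not this one.
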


We end this preliminary section with several well-known results, which will be used in Section~\ref{sec-EK}. The following interior estimates for the gradients of harmonic functions, which essentially come from the mean value property of harmonic functions together with an iteration argument. 

\begin{lemma}[{\cite[Theorem~2.10]{GT01}}]\label{lem-est-harmonic}
    Let $u$ be harmonic in $\Omega$ and let $\Omega'$ be any compact subset of $\Omega$. Then for any multi-index ${\theta}$ we have
    \begin{equation*}
        \sup_{\Omega'}|D^{\theta}u| \leq \left(\frac{n|\theta|}{d} \right)^{|\theta|} \sup_{\Omega}|u|,
    \end{equation*}
    where $d=\mathrm{dist}(\Omega', \partial \Omega)$.
\end{lemma}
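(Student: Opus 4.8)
The plan is to prove the estimate by induction on the order $k \coloneqq |\theta|$, bootstrapping from the first-order case and using repeatedly that every partial derivative of a harmonic function is again harmonic. The base case $k = 0$ is immediate, since then $D^\theta u = u$ and $(n|\theta|/d)^{|\theta|} = 1$.

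Next I would establish the first-order estimate. Fix $x_0 \in \Omega'$, put $d_0 \coloneqq \mathrm{dist}(x_0, \partial\Omega) \geq d$, and take any $r < d_0$, so that $\overline{B_r(x_0)} \subset \Omega$. Since $D_i u$ is harmonic, the mean value property together with the divergence theorem gives
\[
  D_i u(x_0) = \frac{1}{|B_r|}\int_{B_r(x_0)} D_i u \, dx = \frac{1}{|B_r|}\int_{\partial B_r(x_0)} u\,\nu_i \, dS,
\]
and since $|\partial B_r| / |B_r| = n/r$ this yields $|D_i u(x_0)| \leq \frac{n}{r}\sup_{\partial B_r(x_0)}|u| \leq \frac{n}{r}\sup_{\Omega}|u|$. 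Letting $r \uparrow d_0$ and then using $d_0 \geq d$ gives $|D_i u(x_0)| \leq \frac{n}{d}\sup_\Omega |u|$, i.e. the claim for $k = 1$.

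For the inductive step, suppose the estimate holds for all multi-indices of order $k - 1$ and let $|\theta| = k \geq 2$. Factor $D^\theta = D_i \circ D^{\theta'}$ with $|\theta'| = k - 1$, and note that $v \coloneqq D^{\theta'} u$ is harmonic in $\Omega$. Fix $x_0 \in \Omega'$ and again set $d_0 \coloneqq \mathrm{dist}(x_0, \partial\Omega) \geq d$. Applying the first-order estimate to $v$ on the ball $B_{d_0/k}(x_0)$ (whose closure lies in $\Omega$ because $k \geq 2$) gives
\[
  |D^\theta u(x_0)| = |D_i v(x_0)| \leq \frac{nk}{d_0}\sup_{B_{d_0/k}(x_0)}|v| = \frac{nk}{d_0}\sup_{\overline{B_{d_0/k}(x_0)}}|D^{\theta'}u|.
\]
Since $\mathrm{dist}\big(\overline{B_{d_0/k}(x_0)}, \partial\Omega\big) \geq d_0 - d_0/k = d_0(k-1)/k$, the induction hypothesis applied to $u$ on the compact set $\overline{B_{d_0/k}(x_0)}$ bounds the last supremum by $\left(\frac{n(k-1)}{d_0(k-1)/k}\right)^{k-1}\sup_\Omega|u| = \left(\frac{nk}{d_0}\right)^{k-1}\sup_\Omega|u|$. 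Combining the two displays, $|D^\theta u(x_0)| \leq \left(\frac{nk}{d_0}\right)^k \sup_\Omega|u| \leq \left(\frac{nk}{d}\right)^k\sup_\Omega|u|$, which is the desired bound; taking the supremum over $x_0 \in \Omega'$ finishes the induction.

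The only delicate point — and the main (if modest) obstacle — is the choice of the intermediate radius $d_0/k$ at the inductive step: it must be small enough that the shrunken ball keeps a definite distance from $\partial\Omega$, so that the induction hypothesis applies with a controlled constant, yet the resulting factors $\frac{nk}{d_0}$ and $\left(\frac{nk}{d_0}\right)^{k-1}$ must multiply to exactly $\left(\frac{nk}{d_0}\right)^k$; the value $d_0/k$ is precisely what makes this telescoping work and produces the sharp constant $(n|\theta|/d)^{|\theta|}$ rather than a worse one.
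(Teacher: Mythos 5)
Your proof is correct, and it is essentially the standard argument behind the cited result (Gilbarg--Trudinger, Theorem 2.10), which the paper itself quotes without proof: the mean value property plus the divergence theorem gives the gradient bound $n/d$, and the nested-ball induction with radius $d_0/k$ telescopes to the constant $(n|\theta|/d)^{|\theta|}$. No gaps.
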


We also need the stability theorem and the global H\"older regularity result when we derive the $C^{2, \alpha}$ regularity for general operators $F$ by passing the limit.
\begin{theorem}[{\cite[Proposition~2.9]{CC95}}]\label{thm-stability}
    Let $\{F_k\}_{k \geq 1}$ be a sequence of $(\lambda, \Lambda)$-elliptic operators, and let $\{u_k\}_{k \geq 1} \subset C(B_1)$ be such that $F_k(D^2u_k, x) \geq f(x)$ in the viscosity sense in $B_1$. Assume that $F_k$ converges uniformly in compact subsets of $\mathcal{S}^n \times B_1$ to $F$, and that $u_k$ converges uniformly in compact subsets of $B_1$ to $u$.

    Then $F(D^2u, x) \geq f(x)$ in the viscosity sense in $B_1$.
\end{theorem}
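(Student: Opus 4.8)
The plan is to check directly, from the definition of a viscosity subsolution, that $F(D^2u,x)\ge f(x)$ in $B_1$, via the standard perturbed-test-function argument. Fix $x_0\in B_1$ and a smooth $\varphi$ touching $u$ from above at $x_0$, so that $u-\varphi$ has a local maximum at $x_0$ and $(u-\varphi)(x_0)=0$; the goal is $F(D^2\varphi(x_0),x_0)\ge f(x_0)$. First I would reduce to a \emph{strict} touching: for $\delta>0$ set $\varphi_\delta(x):=\varphi(x)+\delta|x-x_0|^2$, which still touches $u$ from above at $x_0$, now strictly on a small closed ball $\overline{B_r(x_0)}\subset B_1$ (indeed $u-\varphi_\delta\le -\delta|x-x_0|^2$ there). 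Since $(\lambda,\Lambda)$-ellipticity forces $F$ to be Lipschitz in the matrix argument, uniformly in $x$, once we know $F(D^2\varphi(x_0)+2\delta I,x_0)\ge f(x_0)$ for every $\delta>0$ we may let $\delta\to0$ and conclude. So it suffices to treat the case of strict touching.

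Next I would transfer the contact point to the approximating solutions. Let $x_k\in\overline{B_r(x_0)}$ be a maximizer of $u_k-\varphi_\delta$ over $\overline{B_r(x_0)}$. Using that $u_k\to u$ uniformly on $\overline{B_r(x_0)}$ and that $x_0$ is the \emph{unique} (strict) maximizer of $u-\varphi_\delta$ there, a short compactness argument shows $x_k\to x_0$: any subsequential limit $\bar x$ satisfies $0\le\lim\big(u_k(x_k)-\varphi_\delta(x_k)\big)=(u-\varphi_\delta)(\bar x)$, forcing $\bar x=x_0$. In particular $x_k\in B_r(x_0)$ for all large $k$, so $u_k-\varphi_\delta$ has an interior local maximum at $x_k$; equivalently the smooth function $\varphi_\delta+\big(u_k(x_k)-\varphi_\delta(x_k)\big)$ touches $u_k$ from above at $x_k$. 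The subsolution property of $u_k$ then gives
\[
F_k\big(D^2\varphi_\delta(x_k),x_k\big)\ge f(x_k).
\]

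Finally I would pass to the limit $k\to\infty$. Since $\varphi_\delta\in C^2$ and $x_k\to x_0$, we have $D^2\varphi_\delta(x_k)\to D^2\varphi_\delta(x_0)$, and the points $(D^2\varphi_\delta(x_k),x_k)$ together with their limit lie in a fixed compact subset $\mathcal K\subset\mathcal S^n\times B_1$. Splitting
\[
\big|F_k(D^2\varphi_\delta(x_k),x_k)-F(D^2\varphi_\delta(x_0),x_0)\big|\le\sup_{\mathcal K}|F_k-F|+\big|F(D^2\varphi_\delta(x_k),x_k)-F(D^2\varphi_\delta(x_0),x_0)\big|,
\]
the first term tends to $0$ by the assumed uniform convergence of $F_k$ to $F$ on compact sets, and the second tends to $0$ because $F$ is continuous (being a locally uniform limit of the continuous operators $F_k$). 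Combined with $f(x_k)\to f(x_0)$ from continuity of $f$, this yields $F(D^2\varphi_\delta(x_0),x_0)\ge f(x_0)$, and letting $\delta\to0$ completes the proof. The one point requiring care is the convergence $x_k\to x_0$: without the preliminary $\delta$-perturbation the maximizers could drift to another contact point of $u-\varphi$ or onto $\partial B_r(x_0)$, which is precisely why strict touching is arranged first; everything else is soft.
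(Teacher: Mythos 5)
Your proof is correct, and it is essentially the classical argument for this result: the paper itself gives no proof but imports it verbatim from \cite[Proposition~2.9]{CC95}, whose proof is exactly your scheme of strict-touching perturbation $\varphi_\delta=\varphi+\delta|x-x_0|^2$, convergence of the maximum points $x_k\to x_0$, application of the subsolution property of $u_k$ at $x_k$, and passage to the limit via locally uniform convergence of $F_k$ and $u_k$. The only step left implicit is that the limit operator $F$ inherits the $(\lambda,\Lambda)$-ellipticity (hence Lipschitz dependence on the matrix argument) from the $F_k$, which is what justifies sending $\delta\to 0$ at the end; this follows at once by passing the Pucci inequalities for $F_k$ to the limit.
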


\begin{theorem}[{\cite[Proposition~4.14]{CC95}}]\label{thm-global-holder}
    Let $u \in C(\overline B_1)$ belong to the solution class $S(\lambda, \Lambda, K)$ in $B_1$. Let $\varphi \coloneqq u|_{\partial B_1}$ and let $\rho$ be a modulus of continuity of $\varphi$, that is, $\rho$ is a nondecreasing function in $(0, \infty)$ with $\lim_{\delta \to 0}\rho(\delta)=0$ such that
    \begin{equation*}
        |\varphi(x)-\varphi(y)| \leq \rho(|x-y|) \quad \text{for any $x, y \in \partial B_1$}.
    \end{equation*}
    Assume finally that $\|\varphi\|_{L^{\infty}(\partial B_1)} \leq K$.

    Then there exists a modulus of continuity $\rho^{\ast}$ of $u$ in $\overline B_1$, that is, $\rho^{\ast}$ is a nondecreasing function in $(0, \infty)$ with $\lim_{\delta \to 0}\rho^{\ast}(\delta)=0$ such that
    \begin{equation*}
        |u(x)-u(y)| \leq \rho^{\ast}(|x-y|) \quad \text{for any $x, y \in \overline B_1$},
    \end{equation*}
    and $\rho^{\ast}$ depends only on $n$, $\lambda$, $\Lambda$, $K$ and $\rho$.
\end{theorem}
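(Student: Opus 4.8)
The plan is to follow the roadmap outlined in the introduction: mollify $F$, derive a quantitative interior $C^2$ estimate for the approximating solutions via a Bernstein-type computation, and then run a Schauder-type (Cordes--Nirenberg) iteration based on an approximation lemma comparing $u^\varepsilon$ with a harmonic function, carefully tracking how every constant depends on $n$, $\lambda$, $\Lambda$, and finally pass to the limit. The key tension is that the perturbation step compares $F$ with the Laplacian $\Delta$, and the error in this comparison is governed precisely by $\Lambda/\lambda - 1$: for any $M\in\mathcal{S}^n$ one has, after the normalization $F(0)=0$ and writing $\tilde F(M) = F(M) - \frac{\lambda+\Lambda}{2}\operatorname{tr}(M)$ (or an analogous centering), the two-sided bound $|F(M) - \bar c\,\operatorname{tr}(M)| \le \frac{\Lambda-\lambda}{2}\,\|M\|_* \le \frac{\Lambda-\lambda}{2}\sqrt n\,\|M\|$ for a suitable mean $\bar c$, which is the source of the factor $(\Lambda/\lambda - 1)$ and the powers of $n$ in the final smallness threshold.

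First I would fix a standard mollifier and set $F^\varepsilon = F * \phi_\varepsilon$ on $\mathcal{S}^n$; then $F^\varepsilon$ is smooth, $(\lambda,\Lambda)$-elliptic, concave, and $F^\varepsilon(0) \to 0$, and by Theorem~\ref{thm-stability} together with Theorem~\ref{thm-global-holder} it suffices to prove the estimate for solutions $u^\varepsilon$ of $F^\varepsilon(D^2 u^\varepsilon) = F^\varepsilon(0) =: c_\varepsilon$ with $c_\varepsilon \to 0$ (after subtracting a quadratic polynomial this is harmless) and then let $\varepsilon \to 0$, using that the $C^{2,\alpha}$ bound we obtain is uniform in $\varepsilon$. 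For the smooth equation, differentiating $F^\varepsilon(D^2 u^\varepsilon)=c_\varepsilon$ twice in a direction $e$ gives $F^\varepsilon_{ij}(D^2u^\varepsilon)\,\partial_{ij}(\partial_{ee}u^\varepsilon) + F^\varepsilon_{ij,kl}\,\partial_{ij}(\partial_e u^\varepsilon)\partial_{kl}(\partial_e u^\varepsilon) = 0$, and concavity makes the second-order term nonpositive, so $w := \partial_{ee}u^\varepsilon$ is a subsolution of the linear uniformly elliptic equation $a_{ij}\partial_{ij}w \ge 0$ with $a_{ij} = F^\varepsilon_{ij}(D^2u^\varepsilon)$, $\lambda I \le (a_{ij}) \le \Lambda I$; a Bernstein argument on $|D^2u^\varepsilon|^2$ (or the local maximum principle from the Krylov--Safonov theory applied to $w$, since $u^\varepsilon$ itself is controlled in $C^\alpha$ by Theorem~\ref{thm-KS} and hence $Du^\varepsilon$ is controlled by a De Giorgi/Bernstein step) yields the interior bound $\|D^2 u^\varepsilon\|_{L^\infty(B_{3/2})} \le C(n,\lambda,\Lambda)\,\|u^\varepsilon\|_{L^\infty(B_2)}$, with the $(n,\lambda,\Lambda)$-dependence made explicit.

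Next comes the approximation lemma: on a ball $B_r(x_0)$ let $h$ solve $\Delta h = 0$ in $B_r(x_0)$ with $h = u^\varepsilon$ on $\partial B_r(x_0)$; then $v := u^\varepsilon - h$ satisfies, in the viscosity sense, $\mathcal{M}^+_{\lambda,\Lambda}(D^2 v) \ge -|\Delta u^\varepsilon - \tfrac{2}{\lambda+\Lambda}F^\varepsilon(D^2u^\varepsilon)|\cdot(\text{const})$ type inequalities, and using the centering identity above together with the interior $C^2$ bound one gets $\|u^\varepsilon - h\|_{L^\infty(B_r(x_0))} \le C(n)\,(\Lambda/\lambda - 1)\,\|D^2u^\varepsilon\|_{L^\infty}\,r^2 \le C(n,\lambda,\Lambda)\,(\Lambda/\lambda - 1)\,r^2\,\|u^\varepsilon\|_{L^\infty(B_2)}$ by the maximum principle (or the ABP estimate). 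Since harmonic functions satisfy the $C^{2,\alpha}$-type estimate from Lemma~\ref{lem-est-harmonic} — the second-order Taylor polynomial $P$ of $h$ at $x_0$ obeys $\|h - P\|_{L^\infty(B_\rho(x_0))} \le C n^? \rho^{3}\,r^{-3}\,\|h\|_{L^\infty(B_r(x_0))}$ — combining the two and optimizing $\rho$ versus $r$ shows: if $\Lambda/\lambda - 1$ is below a threshold of the stated form $\frac1n(A_0 n^3)^{-\frac{2+\alpha}{1-\alpha}}$, then there is $\mu \in (0,1)$ and a quadratic $P$ with $\|u^\varepsilon - P\|_{L^\infty(B_\mu(x_0))} \le \mu^{2+\alpha}\,\|u^\varepsilon - (\text{its own quadratic part})\|_{L^\infty(B_1(x_0))}$; the powers of $n$ in $A_0 n^3$ come from Lemma~\ref{lem-est-harmonic} with $|\theta| = 3$ (giving $(3n)^3$) and the exponent $\frac{2+\alpha}{1-\alpha}$ is exactly what makes the geometric iteration close. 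Iterating this dichotomy on shrinking balls $B_{\mu^k}(x_0)$ in the standard way produces a sequence of quadratics converging to the second-order jet of $u^\varepsilon$ at $x_0$ with the $C^{2,\alpha}$ modulus, uniformly in $x_0 \in \overline B_{1/2}$ and in $\varepsilon$.

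The main obstacle is the bookkeeping in the approximation/iteration step: one must choose the dilation ratio $\mu$, the threshold for $\Lambda/\lambda - 1$, and the scaling of the error estimate so that the geometric series closes at rate $\mu^{2+\alpha}$ while every constant depending on $n$ is pushed into the single universal power $A_0 n^3$ and the exponent $\frac{2+\alpha}{1-\alpha}$. Concretely, after choosing $\mu$ comparably to $(A_0 n^3)^{-\frac{1}{1-\alpha}}$ so that the harmonic-approximation error $\mu^3 (3n)^3 \le \mu^{2+\alpha}$ holds, one needs the perturbation error $C(n)(\Lambda/\lambda-1)$ to be dominated by $\mu^{2+\alpha}$ as well, which forces $\Lambda/\lambda - 1 \lesssim \mu^{2+\alpha}/C(n) \sim (A_0 n^3)^{-\frac{2+\alpha}{1-\alpha}}/n$ — matching the stated hypothesis. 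A secondary subtlety is ensuring the interior $C^2$ estimate constant itself does not re-enter the threshold (it multiplies $r^2$, which is absorbed since we work on a fixed dyadic scale and the $C^2$ bound enters only once, into the base of the induction, not its geometric rate). Finally, the limit $\varepsilon \to 0$ is handled by Theorem~\ref{thm-stability}: $u^\varepsilon \to u$ locally uniformly (by $C^\alpha$ compactness from Theorem~\ref{thm-global-holder}), the uniform $C^{2,\alpha}(\overline B_{1/2})$ bounds pass to the limit, and $F^\varepsilon \to F$ locally uniformly on $\mathcal{S}^n$ identifies the limit equation, giving $u \in C^{2,\alpha}(\overline B_{1/2})$ with the asserted estimate.
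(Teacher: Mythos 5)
Your proposal does not address the statement it was supposed to prove. The statement in question is Theorem~\ref{thm-global-holder}, i.e.\ the global modulus-of-continuity estimate of \cite[Proposition~4.14]{CC95}: given $u\in C(\overline B_1)$ in the solution class $S(\lambda,\Lambda,K)$ whose boundary values $\varphi=u|_{\partial B_1}$ have modulus of continuity $\rho$ and satisfy $\|\varphi\|_{L^\infty(\partial B_1)}\le K$, one must produce a modulus $\rho^\ast$ for $u$ on all of $\overline B_1$ depending only on $n,\lambda,\Lambda,K,\rho$. What you wrote instead is a sketch of the proof of Theorem~\ref{thm-c2alpha-general} (mollification of $F$, Bernstein-type interior $C^2$ estimate, harmonic approximation, Cordes--Nirenberg/Schauder iteration with the threshold $\frac1n(A_0n^3)^{-\frac{2+\alpha}{1-\alpha}}$, and passage to the limit). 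None of that establishes boundary equicontinuity for functions that are merely in the solution class; there is no operator $F$, no concavity, and no smallness of $\Lambda/\lambda$ in the statement at hand. Worse, your argument explicitly invokes Theorem~\ref{thm-global-holder} twice (to identify the boundary values in the limit $\varepsilon\to0$ and to get compactness of $u^\varepsilon$), so as a proof of that very theorem it would be circular.

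A correct argument for the actual statement is of a completely different nature: the interior part of $\rho^\ast$ comes from the Krylov--Safonov interior H\"older estimate (Theorem~\ref{thm-KS}), while the boundary part is obtained by comparison with barriers. At a boundary point $x_0\in\partial B_1$ one uses the boundary modulus $\rho$ and the bound $\|\varphi\|_{L^\infty}\le K$ to build upper and lower barriers for the Pucci extremal inequalities (e.g.\ of the form $\varphi(x_0)+\rho(\delta)+C_\delta\,\omega(x)+$ a correction handling the right-hand side $K$, where $\omega$ is a fixed barrier for $\mathcal M^-_{\lambda,\Lambda}$ vanishing at $x_0$), and the comparison/ABP maximum principle gives $|u(x)-\varphi(x_0)|$ controlled near $x_0$ in terms of $\rho$, $K$, $n$, $\lambda$, $\Lambda$. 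One then splices the boundary estimate with the interior H\"older estimate (applied on balls proportional to the distance to $\partial B_1$) by the standard argument to obtain a single global modulus $\rho^\ast$. In the paper this result is simply quoted from \cite{CC95}, so no new proof was required, but if you are asked to prove it, the barrier-plus-interior-estimate scheme above is the route; your perturbative $C^{2,\alpha}$ machinery is neither necessary nor relevant here.
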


\section{Quantitative H\"older exponent bound in Krylov--Safonov theory}\label{sec-KS}
Suppose that $u$ belongs to the solution class $S(\lambda, \Lambda, K)$ for some $K \geq 0$. The celebrated Krylov--Safonov theory says that $u \in C^{\alpha}(\overline B_{1/2})$ for \emph{some} $\alpha=\alpha(n, \lambda, \Lambda) \in (0,1)$. The goal of this section is to suggest a quantitative bound for the H\"older exponent $\alpha$ in terms of $n$ and $\Lambda/\lambda$, in different situations.

We begin with the quantitative H\"older exponent bound in the uniformly elliptic framework, whose proof is based on the Ishii--Lions method.
\begin{proof}[Proof of Theorem~\ref{thm-holder}]
    Without loss of generality, we may assume that $\|u\|_{L^{\infty}(B_1)}+K=1$ and we show the H\"older continuity of $u$ at the origin. For this purpose, we look for $L_1>0$ and $L_2>0$ such that
	\begin{equation}\label{eq-claim-elliptic}
		M=\sup_{x, y \in \overline B_{1/2}\times \overline B_{1/2}} \left[u(x)-u(y)-L_1 \varphi(|x-y|)-L_2|x|^2-L_2|y|^2\right] \leq 0,
	\end{equation}
	where $\varphi(s)=s^{\alpha}$ for given $\alpha \in (0,1)$. Indeed, if \eqref{eq-claim-elliptic} holds, then we set $y=0$ in \eqref{eq-claim-elliptic} to have
\begin{equation*}
u(x)-u(0)\leq L_1|x|^{\alpha}+L_2|x|^2\leq C|x|^{\alpha} \quad \text{for all } x\in B_{1/2}.
\end{equation*}
Similarly, by setting $x=0$ in \eqref{eq-claim-elliptic}, we have
\begin{equation*}
u(0)-u(y)\leq C|y|^{\alpha}\quad \text{for all } y\in B_{1/2}.
\end{equation*}
Hence, $u$ is $\alpha$-H\"older regular at the origin.
	
	We argue by contradiction: assume that $M>0$. If $(x, y) \in \overline B_{1/2} \times \overline B_{1/2}$ denotes a point where the maximum is attained, then we have
	\begin{equation*}
		L_1\varphi(|x-y|) +L_2|x|^2+L_2|y|^2 \leq |u(x)-u(y)| \leq 2.
	\end{equation*}
    In particular,
\begin{equation*}
|x|^2+|y|^2 \leq \frac{2}{L_2}
\end{equation*}
and
\begin{equation*}
\varphi(\theta)\leq \frac{2}{L_1}, \quad \text{where $\theta=|b|$ and $b=x-y$}.
\end{equation*}
At this stage, we fix $L_2>0$ large enough to ensure $x \neq y$ and $x, y \in B_{1/2}$.

We now apply the Ishii--Jensen's lemma (Theorem~\ref{lem-ij-elliptic}) to obtain that there exist a limiting subjet $(q_x, X)$ of $u$ at $x$ and a limiting superjet $(q_y, Y)$ of $u$ at $y$ such that
	\begin{align}\label{eq-matrixinequality}
		\begin{pmatrix}
		X & 0 \\
		0 & -Y
	\end{pmatrix}
        \leq L_1
        \begin{pmatrix}
		Z & -Z \\
		-Z & Z
	\end{pmatrix}
        +3L_2 I_{2n},
	\end{align}
    where
\begin{align*}
			q&\coloneqq L_1 \varphi'(\theta)\hat{b}, \quad q_x \coloneqq q+2L_2x, \quad q_y \coloneqq q-2L_2y,\\
			Z&\coloneqq\varphi''(\theta) \hat{b} \otimes \hat{b}+\frac{\varphi'(\theta)}{\theta} (I-\hat{b}\otimes \hat{b}) \quad \text{and} \quad \hat{b}\coloneqq\frac{b}{|b|}=\frac{x-y}{|x-y|}.
\end{align*}
First of all, due to the definition of semijets, we have
		\begin{equation*}
		\begin{cases}
			\mathcal{M}_{\lambda, \Lambda}^+(X) \geq -K & \quad \text{in $B_1$}\\
			\mathcal{M}_{\lambda, \Lambda}^-(Y) \leq K & \quad \text{in $B_1$}.
		\end{cases}
	\end{equation*}
Moreover, applying the matrix inequality \eqref{eq-matrixinequality} as a quadratic form inequality to vectors of the form $(\xi, \xi)$ for any $\xi \in \mathbb{R}^n$, we obtain
	\begin{equation*}
		\langle (X-Y)\xi, \xi \rangle \leq 6L_2 |\xi|^2.
	\end{equation*}
	Therefore, $X-Y \leq 6L_2I$ or equivalently, all eigenvalues of $X-Y$ are less than $6L_2$. On the other hand, considering the particular vector $(\hat b, -\hat b)$, we obtain
	\begin{equation*}
		\langle (X-Y)\hat b, \hat b\rangle \leq 6L_2+4L_1\varphi''(\theta).
	\end{equation*}
	Thus, at least one eigenvalue of $X-Y$ is less than $6L_2+4L_1\varphi''(\theta)$ (which will be a negative number). Since
    \begin{equation*}
        -Y \leq L_1 \varphi''(\theta) \hat b\otimes \hat b+L_1\frac{\varphi'(\theta)}{\theta}(I-\hat b \otimes \hat b)+3L_2I_n,
    \end{equation*}
    we obtain that at least one eigenvalue of $Y$ is larger than or equal to $-L_1\varphi''(\theta)-3L_2 (>0)$ and the other $(n-1)$-eigenvalues satisfy
    \begin{equation}\label{ineq1}
        e_i(Y) \geq -L_1\frac{\varphi'(\theta)}{\theta}-3L_2.
    \end{equation}
    In particular, we have
    \begin{equation*}
        K \geq \mathcal{M}_{\lambda, \Lambda}^-(Y)=\lambda \sum e_i^+(Y)-\Lambda \sum e_i^-(Y) 
    \end{equation*}
    or equivalently,
    \begin{equation}\label{ineq2}
        \sum e_i^+(Y) \leq \frac{\Lambda}{\lambda} \sum e_i^-(Y)+\frac{K}{\lambda}.
    \end{equation}
    The estimates \eqref{ineq1} and \eqref{ineq2} imply that
    \begin{align*}
      \mathrm{tr}\,Y=\sum  e_i^+(Y)-\sum e_i^-(Y) &\leq \left(\frac{\Lambda}{\lambda}-1\right) \sum e_i^-(Y)+\frac{K}{\lambda}\\
      &\leq \left(\frac{\Lambda}{\lambda}-1\right) (n-1)\left(L_1\frac{\varphi'(\theta)}{\theta}+3L_2 \right)+\frac{K}{\lambda}.
    \end{align*}
By combining all these estimates, it turns out that
\begin{equation*}
	\begin{aligned}
			\Lambda (n-1) \cdot 6L_2+ \lambda (6L_2+4L_1\varphi''(\theta)) &\geq \mathcal{M}_{\lambda, \Lambda}^+(X-Y) \\
			&\geq \mathcal{M}_{\lambda, \Lambda}^+(X) -	\mathcal{M}_{\lambda, \Lambda}^-(Y) +\mathcal{M}_{\lambda, \Lambda}^-(Y) -\mathcal{M}_{\lambda, \Lambda}^+(Y) \\
			&\geq -2K+\mathcal{M}_{\lambda, \Lambda}^-(Y) -\mathcal{M}_{\lambda, \Lambda}^+(Y).
	\end{aligned}
\end{equation*}
Here we observe that
\begin{equation*}
	\begin{aligned}
		\mathcal{M}_{\lambda, \Lambda}^+(Y) -\mathcal{M}_{\lambda, \Lambda}^-(Y) &=(\Lambda-\lambda)\mathrm{tr}Y\\
		&\leq (\Lambda-\lambda) \left(\left(\frac{\Lambda}{\lambda}-1\right) (n-1)\left(L_1\frac{\varphi'(\theta)}{\theta}+3L_2 \right)+\frac{K}{\lambda}\right).
	\end{aligned}	
\end{equation*}
Therefore, we arrive at
\begin{equation*}
\begin{aligned}
     &2K+6\Lambda (n-1)L_2+6\lambda L_2+(\Lambda-\lambda)\left(\left(\frac{\Lambda}{\lambda}-1\right) 3(n-1)L_2 +\frac{K}{\lambda}\right)  \\
     &\quad\geq L_1\left[-4\lambda \varphi''(\theta)-(\Lambda-\lambda) (n-1)\left(\frac{\Lambda}{\lambda}-1\right)\frac{\varphi'(\theta)}{\theta}\right].
\end{aligned}
\end{equation*}
We note that
\begin{equation*}
    -4\lambda \varphi''(\theta)-(\Lambda-\lambda) (n-1)\left(\frac{\Lambda}{\lambda}-1\right)\frac{\varphi'(\theta)}{\theta}>0
\end{equation*}
if and only if
\begin{equation*}
    \frac{-\theta\varphi''(\theta)}{\varphi'(\theta)} > \frac{n-1}{4}\left(\frac{\Lambda}{\lambda}-1\right)^2
\end{equation*}
if and only if
\begin{equation*}
    1-\alpha > \frac{n-1}{4}\left(\frac{\Lambda}{\lambda}-1\right)^2.
\end{equation*}
We now choose $L_1$ large enough to make a contradiction.
\end{proof}

The interior H\"older regularity of functions in the solution class can be used to establish the $C^{1, \alpha}$ regularity of solutions $u$ of the fully nonlinear equations.
\begin{corollary}[Fully nonlinear equation]\label{cor-fully}
    Let $F : \mathcal{S}^n \to \mathbb{R}$ be $(\lambda, \Lambda)$-elliptic and let $u \in C(B_1)$ be a viscosity solution of
    \begin{equation*}
        F(D^2u)=0 \quad \text{in $B_1$}.
    \end{equation*}
    Given $\alpha \in (0,1)$, if 
       \begin{equation*}
      \frac{\Lambda}{\lambda}-1 < 2\sqrt{\frac{1-\alpha}{n-1}},
    \end{equation*}
    then $u \in C^{1, \alpha}(\overline B_{1/2})$ with the uniform estimate
    \begin{equation*}
        \|u\|_{C^{1, \alpha}(\overline B_{1/2})} \leq C(\|u\|_{L^{\infty}(B_1)}+|F(0)|),
    \end{equation*}
    where $C>0$ is a constant depending only on $n$, $\lambda$, $\Lambda$ and $\alpha$.
\end{corollary}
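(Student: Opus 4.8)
The plan is to deduce the $C^{1,\alpha}$ estimate for $u$ from a $C^\alpha$ estimate for its first-order difference quotients, which belong to a Pucci solution class and are therefore covered by Theorem~\ref{thm-holder}. Two preliminary facts are needed. First, since $F$ is $(\lambda,\Lambda)$-elliptic the equation $F(D^2\cdot)=0$ is translation invariant, and the difference of any two of its viscosity solutions lies in $S(\lambda,\Lambda,0)$: if $u_1,u_2\in C(B_1)$ both solve $F(D^2u_i)=0$, then $w\coloneqq u_1-u_2$ satisfies $\mathcal M^+_{\lambda,\Lambda}(D^2w)\ge 0$ and $\mathcal M^-_{\lambda,\Lambda}(D^2w)\le 0$ in the viscosity sense. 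This is standard but is precisely where the viscosity machinery enters: it cannot be obtained by naive subtraction and instead follows from the Jensen--Ishii lemma (Theorem~\ref{lem-ij-elliptic}), as in \cite{CC95}. Second, I would record the localized form of Theorem~\ref{thm-holder} obtained by rescaling: if $v\in S(\lambda,\Lambda,K)$ in $B_{2r}(x_0)$, then $[v]_{C^\alpha(\overline{B_r(x_0)})}\le C\big(r^{-\alpha}\|v\|_{L^\infty(B_{2r}(x_0))}+r^{2-\alpha}K\big)$, so interior $C^\alpha$ bounds are available on every compact subset of the domain. These are usable for every exponent in $(0,\alpha]$, since $2\sqrt{(1-\alpha)/(n-1)}$ is nonincreasing in $\alpha$, hence the hypothesis $\Lambda/\lambda-1<2\sqrt{(1-\alpha)/(n-1)}$ propagates to all smaller exponents.

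Next comes a difference-quotient bootstrap. Applying $(\lambda,\Lambda)$-ellipticity of $F$ to $u$ and $0$ shows $\mathcal M^-_{\lambda,\Lambda}(D^2u)\le -F(0)\le|F(0)|$ and $\mathcal M^+_{\lambda,\Lambda}(D^2u)\ge -F(0)\ge -|F(0)|$ in the viscosity sense, so $u\in S(\lambda,\Lambda,|F(0)|)$ and hence $u\in C^\alpha_{\mathrm{loc}}(B_1)$ by the localized Theorem~\ref{thm-holder}. Fix a unit vector $e$ and a small $h$. The increment $u(\cdot+he)-u(\cdot)$ is a difference of two solutions, hence lies in $S(\lambda,\Lambda,0)$, and so does $v_{h,e}\coloneqq|h|^{-\beta}\big(u(\cdot+he)-u(\cdot)\big)$ for any $\beta>0$, the solution class being invariant under multiplication by positive scalars. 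If $u\in C^\beta_{\mathrm{loc}}$ with $\beta\in(0,1)$, then $\{v_{h,e}\}$ is bounded in $L^\infty_{\mathrm{loc}}$ uniformly in $h$ and $e$, hence bounded in $C^\alpha_{\mathrm{loc}}$ uniformly in $h$ and $e$ by the localized Theorem~\ref{thm-holder}; the standard characterization of H\"older spaces via incremental quotients (see \cite{CC95, GT01}) then gives $u\in C^{\beta+\alpha}_{\mathrm{loc}}$ when $\beta+\alpha<1$ and $u\in C^{1,\beta+\alpha-1}_{\mathrm{loc}}$ when $\beta+\alpha>1$ (a borderline equality being avoided by replacing $\beta$ with a slightly smaller exponent). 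Starting from $\beta=\alpha$ and iterating, the exponent increases by $\alpha$ at each step and crosses $1$ after finitely many steps, so $u\in C^1_{\mathrm{loc}}(B_1)$; in particular $u$ is locally Lipschitz in $B_1$, with Lipschitz constant on $\overline B_{3/4}$, say, bounded by $C(\|u\|_{L^\infty(B_1)}+|F(0)|)$ since only finitely many steps are used and each step is quantitative.

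Finally, with $u$ locally Lipschitz the first-order difference quotients $w_{h,e}\coloneqq|h|^{-1}\big(u(\cdot+he)-u(\cdot)\big)$ form a family in $S(\lambda,\Lambda,0)$ bounded in $L^\infty_{\mathrm{loc}}$ uniformly in $h$ and $e$, so one more application of the localized Theorem~\ref{thm-holder} bounds $\{w_{h,e}\}$ in $C^\alpha(\overline B_{1/2})$ uniformly in $h$ and $e$, with the bound $C(\|u\|_{L^\infty(B_1)}+|F(0)|)$. Letting $h\to0$ along a subsequence and using Arzel\`a--Ascoli together with the lower semicontinuity of the $C^\alpha$ seminorm, the directional derivative $\partial_e u$ exists, is continuous, and satisfies $[\partial_e u]_{C^\alpha(\overline B_{1/2})}\le C(\|u\|_{L^\infty(B_1)}+|F(0)|)$; taking $e$ over the coordinate directions yields $Du\in C^\alpha(\overline B_{1/2})$, i.e.\ $u\in C^{1,\alpha}(\overline B_{1/2})$ with the asserted estimate. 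The main obstacle is the very first preliminary fact — that the difference of two viscosity solutions of $F(D^2\cdot)=0$ belongs to the Pucci class $S(\lambda,\Lambda,0)$ — which requires the Jensen--Ishii sum machinery rather than a direct computation; everything after that is a routine bootstrap, the only care being to keep all constants depending solely on $n$, $\lambda$, $\Lambda$ and $\alpha$.
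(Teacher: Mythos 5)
Your argument is correct and is essentially the paper's own proof: the paper deduces Corollary~\ref{cor-fully} by combining Theorem~\ref{thm-holder} with \cite[Corollary~5.7]{CC95}, and that cited corollary is precisely the difference-quotient bootstrap you have spelled out --- that the difference of two viscosity solutions of a translation-invariant uniformly elliptic equation lies in $S(\lambda,\Lambda,0)$, the incremental-quotient characterization of H\"older spaces, and a finite iteration to cross exponent $1$. You have simply unpacked the black box that the paper cites, arriving at the same $C^{1,\alpha}$ conclusion with the same constant dependence.
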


\begin{proof}
    This is an immediate combination of Theorem~\ref{thm-holder} and \cite[Corollary~5.7]{CC95}.
\end{proof}

Moreover, Theorem~\ref{thm-holder} can be used to provide a quantitative counterpart of the result obtained in \cite[Theorem~1.4]{LY24}. In other words, we describe the relation between $|p-2|$ and $\alpha$ in a quantitative manner, when a viscosity solution $u$ of $\Delta_p^Nu=0$ belongs to $C^{1, \alpha}$ for $\alpha \in (0,1)$.

\begin{corollary}[Normalized $p$-Laplace equation]\label{cor-normalized-p}
    Let $u \in C(B_1)$ be a viscosity solution of
    \begin{equation*}
        \Delta_p^Nu=0 \quad \text{in $B_1$},
    \end{equation*}
    where $\Delta_p^N$ is the normalized $p$-Laplacian operator given by
    \begin{equation*}
        \Delta_p^Nu \coloneqq |Du|^{2-p}\Delta_pu=\left(\delta_{ij}+(p-2)\frac{u_iu_j}{|Du|^2}\right) u_{ij}.
    \end{equation*}
    Given $\alpha \in (0,1)$, if 
   \begin{equation*}
      \frac{\max\{p-1, 1\}}{\min\{p-1, 1\}}-1 <2 \sqrt{\frac{1-\alpha}{n-1}},
    \end{equation*}
    then $u \in C^{1, \alpha}(\overline B_{1/2})$ with the uniform estimate
    \begin{equation*}
        \|u\|_{C^{1, \alpha}(\overline B_{1/2})} \leq C\|u\|_{L^{\infty}(B_1)},
    \end{equation*}
    where $C>0$ is a constant depending only on $n$, $p$ and $\alpha$.
\end{corollary}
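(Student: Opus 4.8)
The plan is to view $\Delta_p^N$ as a non-divergence elliptic operator with bounded, uniformly elliptic coefficients, deduce membership of $u$ in a solution class, apply Theorem~\ref{thm-holder}, and then upgrade the resulting H\"older estimate to a $C^{1,\alpha}$ one.

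First I would record the ellipticity of the ``frozen'' operator. On $\{Du\neq0\}$ one has $\Delta_p^N u=\mathrm{tr}(A(x)D^2u)$ with $A(x)=I+(p-2)\,Du\otimes Du/|Du|^2$; this matrix has eigenvalue $p-1$ in the direction of $Du$ and eigenvalue $1$ on its orthogonal complement, so $\lambda I\le A(x)\le\Lambda I$ for $\lambda=\min\{p-1,1\}$ and $\Lambda=\max\{p-1,1\}$, whence $\Lambda/\lambda=\max\{p-1,1\}/\min\{p-1,1\}$; note also that $A$ is $0$-homogeneous in $Du$, a feature I shall use later. At a point where an admissible test function has vanishing gradient, the (game-theoretic) definition of viscosity solutions of $\Delta_p^Nu=0$ prescribes the value $\mathrm{tr}(M)+(p-2)\lambda_{\max}(M)$ if $p\ge2$ and $\mathrm{tr}(M)+(p-2)\lambda_{\min}(M)$ if $p\le2$, where $M$ is the test Hessian; each of these equals $\mathrm{tr}\big((I+(p-2)e\otimes e)M\big)$ for the appropriate unit eigendirection $e$ of $M$, and $I+(p-2)e\otimes e$ has eigenvalues in $[\lambda,\Lambda]$, so the one-sided Pucci bounds persist there too. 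Hence $u$ belongs to the solution class $S(\lambda,\Lambda,0)$ in $B_1$, and Theorem~\ref{thm-holder} applies with precisely these constants: if $\Lambda/\lambda-1<2\sqrt{(1-\alpha)/(n-1)}$, then $u\in C^\alpha(\overline B_{1/2})$ with $\|u\|_{C^\alpha(\overline B_{1/2})}\le C\|u\|_{L^\infty(B_1)}$. This already fixes the quantitative threshold in the statement.

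It remains to gain one derivative. For this I would combine the local Lipschitz bound for solutions of $\Delta_p^Nu=0$ with the translation invariance of the equation and its invariance under $u\mapsto r^{-1}\big(u(x_0+r\,\cdot)-u(x_0)\big)$ (a consequence of the $0$-homogeneity of $A$), which reduces the $C^{1,\alpha}$ estimate at a point to an improvement-of-flatness over dyadic balls. That in turn rests on an approximation lemma: a solution $w$ of $\Delta_p^Nw=0$ in $B_1$ with $\|w\|_{L^\infty(B_1)}\le1$ is $L^\infty(B_{1/2})$-close, with error tending to $0$ as $\Lambda/\lambda\to1$, to a harmonic function $h$; indeed, if $h$ is harmonic in $B_{3/4}$ with $h=w$ on $\partial B_{3/4}$, then $\mathrm{tr}\big(A(Dw)D^2(w-h)\big)=-\mathrm{tr}\big((A(Dw)-I)D^2h\big)$, so $w-h$ lies in $S\big(\lambda,\Lambda,C(n)(\Lambda-\lambda)\big)$ on $B_{1/2}$ (using $\|A(Dw)-I\|=|p-2|\le\Lambda-\lambda$ together with the interior Hessian bound for $h$ from Lemma~\ref{lem-est-harmonic}), and the maximum principle for Pucci operators yields the claimed closeness. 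Since harmonic functions admit quadratically accurate affine approximations at small scales, iterating the approximation produces affine maps $\ell_r$ with $\sup_{B_r(x_0)}|u-\ell_r|\le Cr^{1+\alpha}$, i.e.\ $u\in C^{1,\alpha}(\overline B_{1/2})$; keeping the constants explicit reproduces the stated threshold. (Equivalently, since $\Delta_p^Nu=0$ is a genuinely $(\lambda,\Lambda)$-uniformly elliptic equation on the open set $\{|Du|>\gamma\}$, one may invoke the $C^{1,\alpha}$ theory for equations that hold only where the gradient is large.)

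I expect the main obstacle to be the dependence of $\Delta_p^N$ on $Du$: Corollary~\ref{cor-fully} cannot be applied verbatim, naive difference quotients of $u$ carry an error term involving $D^2u$, and the critical set $\{Du=0\}$ must be treated separately. What makes the argument go through are the two structural facts isolated above --- that $A(Du)$ has eigenvalues confined to $[\lambda,\Lambda]$ for \emph{every} value of $Du$, and that the equation is invariant under the rescaling used above --- so the gradient dependence never affects the ellipticity and is absorbed into the perturbative remainder. A secondary, purely bookkeeping, difficulty is to propagate the constants through the iteration so that the final smallness condition is exactly $2\sqrt{(1-\alpha)/(n-1)}$.
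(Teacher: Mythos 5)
Your reduction to the solution class is correct, and it is in fact a slightly different route from the paper's: you verify $u\in S(\lambda,\Lambda,0)$ directly by examining the semicontinuous envelopes of $\Delta_p^N$ at critical points of the test function (each envelope equals $\mathrm{tr}\big((I+(p-2)\,e\otimes e)M\big)$ for a unit vector $e$, and $I+(p-2)\,e\otimes e$ has spectrum in $[\lambda,\Lambda]$ with $\lambda=\min\{p-1,1\}$, $\Lambda=\max\{p-1,1\}$), whereas the paper regularizes the operator and defers to \cite{LY24}. Either way Theorem~\ref{thm-holder} then gives the $C^\alpha$ bound with the stated threshold, since $\Lambda-\lambda=|p-2|$.

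The $C^{1,\alpha}$ upgrade, however, has a gap in the approximation lemma. If $h$ is the harmonic replacement of $w$ in $B_{3/4}$, then $w-h$ indeed satisfies Pucci inequalities with a forcing term of size $|p-2|\,n\,\|D^2h\|$, but the interior estimate from Lemma~\ref{lem-est-harmonic} only gives $\|D^2h(x)\|\lesssim \mathrm{dist}(x,\partial B_{3/4})^{-2}$; this forcing is not in $L^\infty(B_{3/4})$, nor even in $L^n(B_{3/4})$, so neither a barrier comparison nor ABP on $B_{3/4}$ (where $w-h$ vanishes on the boundary) closes the estimate, while restricting to $B_{1/2}$ (where $\|D^2h\|$ is bounded) forfeits all control of $w-h$ on $\partial B_{1/2}$. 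This is exactly the obstacle that the paper's Lemma~\ref{lem-approximation} removes in the Evans--Krylov setting by first establishing the a priori interior Hessian bound $\|D^2u\|_{L^\infty(B_{3/4})}\lesssim\|u\|_{L^\infty(B_1)}$ via the Bernstein technique (Proposition~\ref{prop-bernstein}, Remark~\ref{rmk-scaling}), which makes $N=\|\Delta u\|_{L^\infty(B_{3/4})}$ finite and lets the simple barrier $\tfrac{N}{2n}\big((3/4)^2-|x|^2\big)$ do the work; but that bound requires concavity of $F$ and is unavailable for $\Delta_p^N$. The paper's route for this corollary is therefore regularization: the regularized Dirichlet problem yields smooth solutions $u^\varepsilon$ of a non-singular, uniformly elliptic equation for which the interior Hessian is a priori finite, the $C^{1,\alpha}$ estimate is proved for $u^\varepsilon$ with constants independent of $\varepsilon$, and one passes to the limit. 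To complete your argument you would need either such a substitute Hessian bound, or to regularize as in \cite{LY24}, or to prove the approximation lemma by compactness/contradiction --- the last of which the paper explicitly avoids in order to keep the threshold explicit. Your parenthetical appeal to the large-gradient theory is not a fix either, since the results in that direction in this paper (Theorem~\ref{thm-large}) give only $C^\alpha$, not $C^{1,\alpha}$.
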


\begin{proof}
    We recall that
    \begin{equation*}
        \min\{p-1, 1\} I_n \leq  \delta_{ij}+(p-2)\frac{q_iq_j}{|q|^2} \leq \max\{p-1, 1\}I_n \quad \text{for all $q \in \mathbb{R}^n \setminus \{0\}$}.
    \end{equation*}
    In fact, to overcome a subtle issue arising from the singularity of $Du=0$, one has to introduce the regularized Dirichlet problem and to consider the approximated solution $u^{\varepsilon}$. Since the approximation method was already provided in \cite[Theorem~1.4]{LY24}, we omit the details here; a similar method will be used in Section~\ref{sec-EK}.    
\end{proof}

We next move our attention to the elliptic equations that hold only where the gradient is \emph{large}; see \cite{IS16} for details. For $\gamma \geq 0$, we let
\begin{equation*}
\begin{aligned}
     \mathcal{M}^+_{\lambda, \Lambda, *}(D^2u, \nabla u)&=\mathcal{M}^+_{*}(D^2u, \nabla u)\\
     &\coloneqq 
    \begin{cases}
        \Lambda \sum e_i^+(D^2u)+\lambda \sum e_i^-(D^2u)+\Lambda |\nabla u| & \text{if $|\nabla u| \geq \gamma$}\\
        +\infty & \text{otherwise},
    \end{cases}
\end{aligned}
\end{equation*}
\begin{equation*}
\begin{aligned}
    \mathcal{M}^-_{\lambda, \Lambda, *}(D^2u, \nabla u)&=\mathcal{M}^-_{*}(D^2u, \nabla u)\\
    &\coloneqq 
    \begin{cases}
        \lambda \sum e_i^+(D^2u)+\Lambda \sum e_i^-(D^2u)-\Lambda |\nabla u| & \text{if $|\nabla u| \geq \gamma$}\\
        -\infty & \text{otherwise}.
    \end{cases}
\end{aligned}
\end{equation*}
Imbert--Silvestre~\cite[Theorem~1.1]{IS16} proved that a viscosity solution $u \in C(B_1)$ of 
\begin{equation}\label{eq-elliptic-large}
        \mathcal{M}_{\ast}^+(D^2u, \nabla u) \geq -K \quad \text{and} \quad
			\mathcal{M}_{\ast}^-(D^2u, \nabla u) \leq K  \quad \text{in $B_1$}
	\end{equation}
is $\alpha$-H\"older continuous in $B_{1/2}$ for some small $\alpha \in (0,1)$. We note that if $u$ satisfies \eqref{eq-main}, then $u$ clearly satisfies \eqref{eq-elliptic-large}.

We again utilize the Ishii--Lions method to find a quantitative bound for $\alpha$.
\begin{theorem}[Elliptic; large gradient]\label{thm-large}
	 Assume that $u \in C(B_1)$ satisfies \eqref{eq-elliptic-large} for some constant $K \geq 0$. Given $\alpha \in (0,1)$, if 
     \begin{equation*}
      \frac{\Lambda}{\lambda}-1 < 2\sqrt{\frac{1-\alpha}{n-1}},
    \end{equation*}
    then $u \in C^{\alpha}(\overline B_{1/2})$ with the uniform estimate
    \begin{equation*}
        \|u\|_{C^{\alpha}(\overline B_{1/2})} \leq C(\|u\|_{L^{\infty}(B_1)}+K),
    \end{equation*}
    where $C>0$ is a constant depending only on $n$, $\lambda$, $\Lambda$, $\alpha$ and  $\gamma/(\|u\|_{L^{\infty}(B_1)}+K)$.
\end{theorem}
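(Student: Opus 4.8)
The plan is to run the Ishii--Lions argument of the proof of Theorem~\ref{thm-holder} essentially verbatim, the only differences being the two new features of \eqref{eq-elliptic-large}: the operators $\mathcal{M}^\pm_*$ are \emph{inactive} wherever the gradient is below the threshold $\gamma$, and where they are active they carry an extra first-order term $\pm\Lambda|\nabla u|$. After normalizing $\|u\|_{L^\infty(B_1)}+K=1$, I introduce the same auxiliary functional
\begin{equation*}
\Phi(x,y)=u(x)-u(y)-L_1|x-y|^\alpha-L_2|x|^2-L_2|y|^2 \quad \text{on } \overline B_{1/2}\times\overline B_{1/2},
\end{equation*}
assume for contradiction that $M:=\sup\Phi>0$, pick a maximum point $(x,y)$, and record as before that $|x|^2+|y|^2\le 2/L_2$, that $\theta:=|x-y|$ satisfies $\theta^\alpha\le 2/L_1$ and $\theta>0$ (since $M>0$), and that $x,y\in B_{1/2}$ once $L_2$ is fixed large. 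The Ishii--Jensen lemma (Theorem~\ref{lem-ij-elliptic}) then supplies $(q_x,X)$ and $(q_y,Y)$ with $q=L_1\varphi'(\theta)\hat b$, $q_x=q+2L_2x$, $q_y=q-2L_2y$, and the same matrix inequality as in Theorem~\ref{thm-holder}; in particular all the spectral conclusions on $X$, $Y$ and $X-Y$ extracted from that inequality are unchanged.

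The genuinely new step is to check that the equations are \emph{active} at both $x$ and $y$, i.e.\ $|q_x|,|q_y|\ge\gamma$. Since $\varphi(s)=s^\alpha$ gives $|q|=L_1\varphi'(\theta)=\alpha L_1\theta^{\alpha-1}$ and $\theta\le(2/L_1)^{1/\alpha}$, one has $|q|\ge c(\alpha)L_1^{1/\alpha}$, which tends to $+\infty$ with $L_1$, whereas $|2L_2x|+|2L_2y|\le 4L_2\sqrt{2/L_2}$ is a fixed constant once $L_2$ has been chosen. Hence $|q_x|,|q_y|\ge|q|-C(L_2)\ge\gamma$ provided $L_1$ is taken large in terms of $\gamma$ and $L_2$; this is precisely where the stated dependence on $\gamma/(\|u\|_{L^\infty(B_1)}+K)$ enters after undoing the normalization. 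With the equations active, the semijet relations give
\begin{equation*}
\mathcal{M}^+_{\lambda,\Lambda}(X)\ge -K-\Lambda|q_x|, \qquad \mathcal{M}^-_{\lambda,\Lambda}(Y)\le K+\Lambda|q_y|,
\end{equation*}
so the whole chain of estimates in Theorem~\ref{thm-holder} carries over with $K$ replaced by $K+\Lambda|q_x|$ (resp.\ $K+\Lambda|q_y|$) at the two places where the equation is invoked, and with $\mathrm{tr}\,Y$ bounded using the modified right-hand side.

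Pushing these substitutions to the end, the left-hand side of the final inequality picks up extra terms of size at most $C\bigl(\Lambda+\tfrac{\Lambda(\Lambda-\lambda)}{\lambda}\bigr)\bigl(|q_x|+|q_y|\bigr)\le C(L_1\alpha\theta^{\alpha-1}+L_2)$, whereas the right-hand side is still
\begin{equation*}
L_1\Bigl[-4\lambda\varphi''(\theta)-(\Lambda-\lambda)(n-1)\bigl(\tfrac{\Lambda}{\lambda}-1\bigr)\tfrac{\varphi'(\theta)}{\theta}\Bigr]=c\,\alpha\,L_1\,\theta^{\alpha-2},
\end{equation*}
with $c=4\lambda(1-\alpha)-(\Lambda-\lambda)(n-1)(\Lambda/\lambda-1)>0$ exactly under the hypothesis $\tfrac{\Lambda}{\lambda}-1<2\sqrt{(1-\alpha)/(n-1)}$, i.e.\ $1-\alpha>\tfrac{n-1}{4}(\tfrac{\Lambda}{\lambda}-1)^2$. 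The decisive point is the scaling gap $\theta^{\alpha-2}=\theta^{-1}\cdot\theta^{\alpha-1}$ together with $\theta^{-1}\ge(L_1/2)^{1/\alpha}$: the coercive right-hand side carries one extra negative power of $\theta$ relative to the new gradient contributions on the left, so for $L_1$ large it dominates both those contributions and the $L_2$-dependent constants, yielding the contradiction. Since every requirement on $L_1$ (smallness of $\theta$, gradient activation, final contradiction) is a \emph{lower} bound, they are mutually compatible and we may fix one admissible $L_1$. This shows $M\le 0$, hence the pointwise $C^\alpha$ bound at the center; running the argument centered at an arbitrary point of $B_{1/2}$ (and invoking Theorem~\ref{thm-global-holder} for the seminorm control up to the closure) gives $\|u\|_{C^\alpha(\overline B_{1/2})}\le C(\|u\|_{L^\infty(B_1)}+K)$ with $C$ depending only on $n,\lambda,\Lambda,\alpha$ and $\gamma/(\|u\|_{L^\infty(B_1)}+K)$.

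The main obstacle, and the only new phenomenon relative to Theorem~\ref{thm-holder}, is the tension between the two roles of $L_1$: making $L_1$ large is what forces the gradient past $\gamma$ and activates the equation, but it simultaneously inflates the first-order penalties $\Lambda|q_x|,\Lambda|q_y|$ that now sit on the ``bad'' side of the estimate. Both effects are governed by the single scale $L_1\theta^{\alpha-1}$, against which the coercive term $L_1\theta^{\alpha-2}$ must win; it is exactly the strict inequality $\alpha<1$ that supplies the extra factor $\theta^{-1}$ needed for this, so that no smallness on $\Lambda/\lambda-1$ beyond that of Theorem~\ref{thm-holder} is required.
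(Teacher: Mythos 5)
Your proposal is correct and follows essentially the same route as the paper's proof: the same Ishii--Lions functional and Ishii--Jensen matrix inequality, the same mechanism of forcing $|q|=L_1\varphi'(\theta)$ past both $2L_2\sqrt{2/L_2}$ and $\gamma$ by taking $L_1$ large so that the operators $\mathcal{M}^\pm_*$ are active at both $x$ and $y$, and the same absorption of the first-order penalties $\Lambda|q_x|,\Lambda|q_y|\sim L_1\theta^{\alpha-1}$ into the coercive term $\sim L_1\theta^{\alpha-2}$ using the extra factor $\theta^{-1}$ (the paper phrases this by choosing $L_1$ so that $4\Lambda\varphi'(\theta)+2(\Lambda-\lambda)(\Lambda/\lambda)\varphi'(\theta)\le-4\varepsilon\lambda\varphi''(\theta)$ for a small $\varepsilon$, then verifying positivity of the remaining bracket under the stated ellipticity condition). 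Your power-counting explanation of why the three demands on $L_1$ are compatible lower bounds is a clear restatement of the paper's reasoning, and the rest (covering $B_{1/2}$, tracking the dependence on $\gamma$ after undoing the normalization) is the routine part the paper leaves implicit.
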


\begin{proof}
    As in the proof of Theorem~\ref{thm-holder}, we may assume that $\|u\|_{L^{\infty}(B_1)}+K=1$ and it is enough to find $L_1>0$ and $L_2>0$ such that
	\begin{equation*}
		M=\sup_{x, y \in \overline B_{1/2}\times \overline B_{1/2}} \left[u(x)-u(y)-L_1 \varphi(|x-y|)-L_2|x|^2-L_2|y|^2\right] \leq 0,
	\end{equation*}
	where $\varphi(s)=s^{\alpha}$ for given $\alpha \in (0,1)$. 
	
	We argue by contradiction: assume that $M>0$. If $(x, y) \in \overline B_{1/2} \times \overline B_{1/2}$ denotes a point where the maximum is attained, we have
	\begin{equation}\label{eq-max}
		L_1\varphi(|x-y|) +L_2|x|^2+L_2|y|^2 \leq |u(x)-u(y)| \leq 2.
	\end{equation}
	If we choose $L_2$ large enough, then $x, y \in B_{1/2}$ and $x \neq y$.
	
	We use the same notation as in the proof of Theorem~\ref{thm-holder}. In view of \eqref{eq-max}, we have $L_1\theta^{\alpha} \leq 1$ and so if we choose $L_1$ large enough, then $\theta$ will be small and $\varphi'(\theta)$ will be large. In particular, we arrive at $|q|=L_1\varphi'(\theta) \geq 2\max\{L_2, \gamma\}$ and so 
    \begin{equation*}
       \frac{|q|}{2} \leq |q_x|=|q+2L_2x| \leq 2|q| \quad \text{and} \quad \frac{|q|}{2} \leq |q_y|=|q-2L_2y| \leq 2|q|.
    \end{equation*}	
      Since $|q_x| \geq \gamma$ and $|q_y| \geq \gamma$, we have
		\begin{equation*}
		\begin{cases}
			\mathcal{M}_{\ast}^+(X, q_x)=\Lambda \sum e_i^+(X)+\lambda \sum e_i^-(X)+\Lambda |q_x| \geq -K & \quad \text{in $B_1$}\\
			\mathcal{M}_{\ast}^-(Y, q_y)=\lambda \sum e_i^+(Y)+\Lambda \sum e_i^-(Y)-\Lambda |q_y| \leq K & \quad \text{in $B_1$}.
		\end{cases}
	\end{equation*}
    Therefore, we repeat the argument in the proof of Theorem~\ref{thm-holder} to obtain
    \begin{equation*}
        K+2\Lambda |q| \geq \lambda \sum e_i^+(Y)-\Lambda \sum e_i^-(Y)
    \end{equation*}
    and so 
    \begin{equation*}
    \begin{aligned}
        \mathrm{tr}Y=\sum e_i^+(Y)-\sum e_i^-(Y) &\leq \left(\frac{\Lambda}{\lambda}-1\right) \sum e_i^-(Y)+\frac{K}{\lambda}+2\frac{\Lambda}{\lambda} |q|\\
        &\leq  \left(\frac{\Lambda}{\lambda}-1\right) (n-1)\left(L_1\frac{\varphi'(\theta)}{\theta}+3L_2\right)+\frac{K}{\lambda}+2\frac{\Lambda}{\lambda} |q|.
    \end{aligned}
    \end{equation*}
By combining these estimates, it turns out that
\begin{equation*}
	\begin{aligned}
			\Lambda (n-1) \cdot 6L_2&+\lambda (6L_2+4L_1\varphi''(\theta))\\
            &\geq \mathcal{M}_{\lambda, \Lambda}^+(X-Y) \\
			&\geq (\mathcal{M}_{\ast}^+(X, q_x)-\Lambda |q_x|) -(\mathcal{M}_{\ast}^-(Y, q_y)+\Lambda |q_y|) +\mathcal{M}_{\lambda, \Lambda}^-(Y) -\mathcal{M}_{\lambda, \Lambda}^+(Y)\\
			&\geq -2K-\Lambda |q_x|-\Lambda |q_y|+\mathcal{M}_{\lambda, \Lambda}^-(Y) -\mathcal{M}_{\lambda, \Lambda}^+(Y).
	\end{aligned}
\end{equation*}
Here we observe that
\begin{equation*}
	\begin{aligned}
		\mathcal{M}_{\lambda, \Lambda}^+(Y) -\mathcal{M}_{\lambda, \Lambda}^-(Y) &=(\Lambda-\lambda)\mathrm{tr}Y\\
		&\leq (\Lambda-\lambda) \left(\left(\frac{\Lambda}{\lambda}-1\right) (n-1)\left(L_1\frac{\varphi'(\theta)}{\theta}+3L_2\right)+\frac{K}{\lambda}+2\frac{\Lambda}{\lambda} |q|\right).
	\end{aligned}	
\end{equation*}
Therefore, by recalling that $|q|=L_1\varphi'(\theta)$, we arrive at
\begin{equation*}
\begin{aligned}
     &2K+6\Lambda (n-1)L_2+ 6\lambda L_2+(\Lambda-\lambda)\left(\left(\frac{\Lambda}{\lambda}-1\right)(n-1)L_2+\frac{K}{\lambda}\right)  \\
     &\quad\geq L_1\left[-4\lambda \varphi''(\theta)-(\Lambda-\lambda) (n-1)\left(\frac{\Lambda}{\lambda}-1\right)\frac{\varphi'(\theta)}{\theta}-4\Lambda \varphi'(\theta)-2(\Lambda-\lambda)\frac{\Lambda}{\lambda}\varphi'(\theta)\right].
\end{aligned}
\end{equation*}
Since $\theta \to 0$ as $L_1 \to \infty$, we may choose $L_1=L_1(\varepsilon, n, \lambda, \Lambda, \gamma)$ larger so that 
\begin{equation*}
    4\Lambda \varphi'(\theta)+2(\Lambda-\lambda)\frac{\Lambda}{\lambda}\varphi'(\theta) \leq -4\varepsilon \lambda \varphi''(\theta)
\end{equation*}
for given $\varepsilon \in (0,1)$. Then we note that
\begin{equation*}
    -4\lambda (1-\varepsilon)\varphi''(\theta)-(\Lambda-\lambda) (n-1)\left(\frac{\Lambda}{\lambda}-1\right)\frac{\varphi'(\theta)}{\theta}>0 \quad \text{for some $
\varepsilon \in (0,1)$}
\end{equation*}
if and only if
\begin{equation*}
    1-\alpha > \frac{n-1}{4}\left(\frac{\Lambda}{\lambda}-1\right)^2.
\end{equation*}
We now choose $L_1$ further large enough to make a contradiction.
\end{proof}

We finally develop the parabolic counterpart of Theorem~\ref{thm-large}, i.e., the H\"older regularity of viscosity solutions of \eqref{eq-parabolic-large} in both space and time variables. We would like to point out that the parabolic analogue of \cite{IS16} was open, to the best of the authors' knowledge. In other words, it was even unknown whether a viscosity solution of \eqref{eq-parabolic-large} is H\"older continuous or not. In the following theorem, we prove the H\"older continuity of $u$ in the space variable under the assumption that $\Lambda/\lambda$ is close to $1$.
\begin{theorem}[Parabolic; large gradient 1]\label{thm-parabolic-space}
    Assume that $u \in C(Q_1)$ satisfies
	\begin{equation}\label{eq-parabolic-large}
    \left\{
    \begin{aligned}
        u_t-\mathcal{M}_{\ast}^+(D^2u, \nabla u) &\leq K && \quad \text{in $Q_1$}\\
			u_t-\mathcal{M}_{\ast}^-(D^2u, \nabla u) &\geq -K && \quad \text{in $Q_1$}
    \end{aligned}
		\right.
	\end{equation}
    for some constant $K \geq 0$. Given $\alpha \in (0,1)$, if 
       \begin{equation*}
      \frac{\Lambda}{\lambda}-1 < 2\sqrt{\frac{1-\alpha}{n-1}},
    \end{equation*}
    then  $u(\cdot, t) \in C^{\alpha}_x$ with the uniform estimate
    \begin{equation*}
       |u(x, t)-u(y, t)| \leq C(\|u\|_{L^{\infty}(Q_1)}+K)|x-y|^{\alpha} \quad \text{for any $(x, t), (y, t) \in \overline Q_{1/2}$}, 
    \end{equation*}
    where $C>0$ is a constant depending only on $n$, $\lambda$, $\Lambda$, $\alpha$ and $\gamma/(\|u\|_{L^{\infty}(Q_1)}+K)$.
\end{theorem}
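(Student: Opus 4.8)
The plan is to transplant the Ishii--Lions argument from the proof of Theorem~\ref{thm-large} to the parabolic setting, carrying the time variable along as a common variable of the two points and invoking the parabolic Ishii--Jensen lemma (Theorem~\ref{lem-ij-parabolic}) in place of its elliptic counterpart. After the normalization $u\mapsto u/(\|u\|_{L^\infty(Q_1)}+K)$ I may assume $\|u\|_{L^\infty(Q_1)}+K=1$; this rescales the gradient threshold to $\gamma/(\|u\|_{L^\infty(Q_1)}+K)$, which is why that ratio appears in the final constant. By a standard covering argument it suffices to bound $u(x,t)-u(y,t)$ for $(x,t),(y,t)$ near a fixed reference point, which after a translation I take to be the origin. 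For small $\sigma>0$ and parameters $L_1,L_2>0$ to be chosen I would then consider
\begin{equation*}
    \Phi(x,y,t)\coloneqq u(x,t)-u(y,t)-L_1\varphi(|x-y|)-L_2|x|^2-L_2|y|^2-\frac{\sigma}{-t},\qquad \varphi(s)=s^\alpha,
\end{equation*}
on $\overline B_{1/2}\times\overline B_{1/2}\times[-1/4,0)$, and, exactly as in the proof of Theorem~\ref{thm-holder}, it is enough to show that $\sup\Phi\le0$ for a suitable choice of $L_1,L_2$ independent of $\sigma$, and then let $\sigma\to0$.

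Assuming for contradiction that the supremum $M$ is positive, the penalization $\sigma/(-t)$ drives it away from $t=0$, so (after a routine treatment of the bottom time slice) it is attained at some $(\hat x,\hat y,\hat t)$ with $\hat t$ interior to the time interval of $Q_1$, while taking $L_2$ large confines $\hat x,\hat y$ to $B_{1/2}$ and forces $\hat x\ne\hat y$. I would apply Theorem~\ref{lem-ij-parabolic} with $k=2$, $u_1=u(\cdot,\cdot)$, $u_2=-u(\cdot,\cdot)$ and the test function $L_1\varphi(|x-y|)+L_2|x|^2+L_2|y|^2+\sigma/(-t)$. For each $\varepsilon>0$ this produces $b_1,b_2\in\mathbb R$ and $X,Y\in\mathcal S^n$ such that $(b_1,q_{\hat x},X)$ is a parabolic superjet of $u$ at $(\hat x,\hat t)$, $(-b_2,q_{\hat y},Y)$ is a parabolic subjet of $u$ at $(\hat y,\hat t)$, with $q_{\hat x},q_{\hat y},Z$ built from $\hat x,\hat y$ exactly as in the proof of Theorem~\ref{thm-holder}, the matrix inequality
\begin{equation*}
    \begin{pmatrix} X & 0\\ 0 & -Y\end{pmatrix}\le L_1\begin{pmatrix} Z & -Z\\ -Z & Z\end{pmatrix}+3L_2 I_{2n}
\end{equation*}
holds, and --- this is the only place the time variable enters --- $b_1+b_2=\partial_t(\sigma/(-t))\big|_{t=\hat t}=\sigma/\hat t^{\,2}\ge0$. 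Because the test function depends on $t$ only through the penalization, the two time--slope terms enter the comparison with a favorable sign rather than obstructing it.

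Choosing $L_1$ large makes $\theta\coloneqq|\hat x-\hat y|$ small, hence $|q_{\hat x}|,|q_{\hat y}|\ge\tfrac12 L_1\varphi'(\theta)\ge\gamma$, so \eqref{eq-parabolic-large} is active at both points; adding the subsolution inequality $b_1-\mathcal{M}_{\ast}^{+}(X,q_{\hat x})\le K$ to the supersolution inequality $(-b_2)-\mathcal{M}_{\ast}^{-}(Y,q_{\hat y})\ge-K$ and using $b_1+b_2\ge0$ gives
\begin{equation*}
    \mathcal{M}_{\ast}^{+}(X,q_{\hat x})-\mathcal{M}_{\ast}^{-}(Y,q_{\hat y})\ge-2K,
\end{equation*}
which is precisely the inequality reached at the analogous stage of the proof of Theorem~\ref{thm-large}. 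From here the argument is verbatim: the matrix inequality yields $X-Y\le6L_2 I$, one eigenvalue of $X-Y$ no larger than $6L_2+4L_1\varphi''(\theta)<0$, one eigenvalue of $Y$ at least $-L_1\varphi''(\theta)-3L_2>0$ and the remaining $n-1$ eigenvalues of $Y$ bounded below by $-L_1\varphi'(\theta)/\theta-3L_2$; inserting these into the extremal--operator comparison and absorbing the first--order terms $\Lambda(|q_{\hat x}|+|q_{\hat y}|)$ by the same auxiliary parameter used in the proof of Theorem~\ref{thm-large}, one reaches a contradiction as soon as
\begin{equation*}
    1-\alpha>\frac{n-1}{4}\left(\frac{\Lambda}{\lambda}-1\right)^2,
\end{equation*}
upon taking $L_1$ large enough. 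Letting $\sigma\to0$ then gives the asserted estimate, with $C$ depending on $n,\lambda,\Lambda,\alpha$ and $\gamma/(\|u\|_{L^\infty(Q_1)}+K)$.

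The hard part will not be this computation, which is inherited from Theorem~\ref{thm-large}, but the legitimacy of invoking Theorem~\ref{lem-ij-parabolic}: its structural hypothesis demands a one--sided bound $b_i\le C$ on the time--slopes of the parabolic semijets of $u_i$ near the maximum, and for an equation imposed only where $|\nabla u|\ge\gamma$ the PDE supplies such a bound only on the region where the gradient is large, not on a full neighborhood of $(\hat x,\hat t)$ or $(\hat y,\hat t)$ --- this is exactly the obstruction that had kept the parabolic analogue of \cite{IS16} open. I would circumvent it by first replacing $u$ with its sup-- and inf--convolutions in the $(x,t)$ variables: these are semiconvex, respectively semiconcave, hence locally Lipschitz in $t$ (so the hypothesis holds automatically), and they are approximate sub/supersolutions of \eqref{eq-parabolic-large} on the set where their gradients exceed $\gamma$, up to an error vanishing with the regularization parameter. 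Running the argument above for the regularizations with quantitative control of all constants and then passing to the limit would yield the theorem; an alternative is to first produce an a priori modulus of continuity in $t$ for $u$ by a barrier comparison and then apply Theorem~\ref{lem-ij-parabolic} directly.
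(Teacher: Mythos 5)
Your argument follows the same route as the paper: the Ishii--Lions doubling in $x$ with $t$ carried as a common variable, the parabolic Ishii--Jensen lemma, and the spectral computation inherited verbatim from Theorem~\ref{thm-large}. The only cosmetic difference is the time penalization: you use $-\sigma/(-t)$ (pushing the maximum strictly before $t=0$, then letting $\sigma\to0$), whereas the paper uses $-L_2t^2$ on $t\in[-1/4,0]$, which for $L_2$ large excludes the initial slice $t=-1/4$ but permits $\hat t=0$. Your choice needs a small extra device to rule out $\hat t=-1/4$ (e.g.\ keeping the $-L_2t^2$ term alongside $-\sigma/(-t)$); otherwise both reach the same inequality $\mathcal{M}_\ast^+(X,q_x)-\mathcal{M}_\ast^-(Y,q_y)\geq-2K$, yours because $b_1+b_2\geq0$, the paper's because $\sigma_x-\sigma_y=2L_2t$ is bounded and enters harmlessly. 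Your closing concern about the structural hypothesis of Theorem~\ref{lem-ij-parabolic} is well founded and goes beyond what the paper writes: the lemma as stated requires an a priori one-sided bound $b_i\leq C$ on the time-slopes of semijets of $u_i$ in a full neighborhood of the maximum, and for the $\gamma$-truncated extremal operators the subsolution/supersolution inequalities supply such a bound only where the gradient is at least $\gamma$, so the hypothesis is not automatic; the paper's proof invokes the lemma without verifying it. The sup/inf-convolution regularization you propose (or, alternatively, establishing a modulus in $t$ by a barrier first) is a standard and sound way to close this, the only caveat being that the convolutions are approximate solutions of the equation with a slightly shifted gradient threshold, so the constants must be tracked as the convolution parameter tends to zero. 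In short, your proposal is correct, essentially identical in strategy, and more careful on this technical point than the paper's own proof.
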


\begin{proof}
Without loss of generality, we may assume that $\|u\|_{L^{\infty}(Q_1)}+K=1$ and we show the H\"{o}lder continuity of $u$ at $(0,0)$. As in the elliptic case, it suffices to prove
\begin{equation}\label{claim-parabolic}
		M\coloneqq\max_{\substack{x, y \in \overline B_{1/2}\times \overline B_{1/2}  \\ t \in [-1/4, 0] }} \left[u(x, t)-u(y,t)-L_1\varphi(|x-y|)-L_2|x|^2-L_2|y|^2-L_2t^2 \right] \leq 0,
\end{equation}
where $\varphi(s) = s^{\alpha}$ for given $\alpha \in (0,1)$.

We prove \eqref{claim-parabolic} by contradiction: suppose that the positive maximum $M$ is attained at $t \in [-1/4, 0]$ and $x, y \in \overline{B}_{1/2}$. It immediately follows that $x \neq y$ and
\begin{equation*}
L_1\varphi(|x-y|)+L_2|x|^2+L_2|y|^2+L_2t^2 \leq 2\|u\|_{L^{\infty}(Q_1)}\leq 2.
\end{equation*}
In particular,
\begin{equation*}
|x|^2+|y|^2+|t|^2 \leq \frac{2}{L_2}
\end{equation*}
and
\begin{equation}\label{eq-theta}
\varphi(\theta)\leq \frac{2}{L_1}, \quad \text{where $\theta=|b|$ and $b=x-y$}.
\end{equation}
We fix $L_2>0$ large enough to ensure $t \in (-1/4, 0]$ and $x, y \in B_{1/2}$.
	
We now apply the parabolic version of Ishii--Jensen's lemma (Theorem~\ref{lem-ij-parabolic}) to obtain that there exist a limiting subjet $(\sigma_x, q_x, X)$ of $u$ at $(x, t)$ and a limiting superjet $(\sigma_y, q_y, Y)$ of $u$ at $(y, t)$ such that
\begin{itemize}
			\item
			$
			\begin{pmatrix}
				X & 0 \\
				0 & -Y
			\end{pmatrix}
			\leq L_1
			\begin{pmatrix}
				Z & -Z \\
				-Z & Z
			\end{pmatrix}
			+3L_2I_{2n}
			;$
			\item $\sigma_x-\sigma_y=2L_2t$,
\end{itemize}
where
\begin{align*}
			q&\coloneqq L_1 \varphi'(\theta)\hat{b}, \quad q_x \coloneqq q+2L_2x, \quad q_y \coloneqq q-2L_2y,\\
			Z&\coloneqq\varphi''(\theta) \hat{b} \otimes \hat{b}+\frac{\varphi'(\theta)}{\theta} (I-\hat{b}\otimes \hat{b}) \quad \text{and} \quad \hat{b}\coloneqq\frac{b}{|b|}=\frac{x-y}{|x-y|}.
\end{align*}
By choosing $L_1$ large enough, $\theta$ will be small, $|\varphi'(\theta)|$ and so $|q| \geq 2\max\{L_2, \gamma\}$. In particular, we have
\begin{equation*}
\frac{|q|}{2} \leq |q_x|=|q+2L_2x| \leq 2|q| \quad \text{and} \quad \frac{|q|}{2} \leq |q_y|=|q-2L_2y| \leq 2|q|.
\end{equation*}
Then it follows from the definition of viscosity solution that
\begin{equation*}
    2L_2t=\sigma_x-\sigma_y \leq 2K+\mathcal{M}_{\ast}^+(X, q_x) -	\mathcal{M}_{\ast}^-(Y, q_y).
\end{equation*}
Since the remaining part of the proof is identical to the one of Theorem~\ref{thm-large}, we omit the details.
\end{proof}

In fact, Theorem~\ref{thm-parabolic-space} was only concerned with the H\"older regularity of viscosity solutions of \eqref{eq-parabolic-large} in the space variable. We now follow the argument in \cite[Lemma~9.1]{BBL02} to derive the H\"older regularity of solutions in the time variable as follows. 

\begin{corollary}[Parabolic; large gradient 2]\label{cor-parabolic-time}
    Assume that $u \in C(Q_1)$ satisfies \eqref{eq-parabolic-large} for some constant $K \geq 0$. Given $\alpha \in (0,1)$, if 
    \begin{equation*}
      \frac{\Lambda}{\lambda}-1 < 2\sqrt{\frac{1-\alpha}{n-1}},
    \end{equation*}
    then $u(x, \cdot) \in C^{\alpha/2}_t$ with the uniform estimate
    \begin{equation*}
       |u(x, t_1)-u(x, t_2)| \leq C(\|u\|_{L^{\infty}(Q_1)}+K)|t_1-t_2|^{\frac{\alpha}{2}} \quad \text{for any $(x, t_1), (x, t_2) \in \overline Q_{1/2}$}, 
    \end{equation*}
    where $C>0$ is a constant depending only on $n$, $\lambda$, $\Lambda$, $\alpha$ and $\gamma/(\|u\|_{L^{\infty}(Q_1)}+K)$.
\end{corollary}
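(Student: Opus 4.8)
The plan is to upgrade the spatial H\"older regularity furnished by Theorem~\ref{thm-parabolic-space} to H\"older regularity in time by a comparison (barrier) argument, exactly in the spirit of \cite[Lemma~9.1]{BBL02}, with one modification dictated by the degeneracy of $\mathcal{M}_\ast^\pm$: recall that these operators take the value $\pm\infty$ wherever the gradient is below $\gamma$. After the normalization $\|u\|_{L^\infty(Q_1)}+K=1$ (so that the dependence on $\gamma$ becomes a dependence on $\gamma/(\|u\|_{L^\infty(Q_1)}+K)$) and a routine covering and rescaling reduction, it suffices to fix $x_0$ in a slightly smaller ball, to fix times $t_1<t_2$ in the relevant interval with $h:=t_2-t_1$ \emph{small} (for $h$ bounded below by a fixed constant the assertion is immediate from $|u(x_0,t_1)-u(x_0,t_2)|\le 2$), and to prove the one-sided estimate $u(x_0,t_2)-u(x_0,t_1)\le Ch^{\alpha/2}$. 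The opposite inequality then follows by running the same argument with the two inequalities in \eqref{eq-parabolic-large} interchanged (equivalently, applied to $-u$), and the joint modulus of continuity on $\overline Q_{1/2}$ results from combining this with Theorem~\ref{thm-parabolic-space}.

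For the one-sided estimate I would compare $u$ on a parabolic cylinder $B_R(x_0)\times(t_1,t_2]$, with $R$ comparable to $\rho$ and $\rho\sim h^{1/2}$, against a barrier of the form
\[
\Psi(x,t)=u(x_0,t_1)+C_0\bigl(|x-x_0|^2+\rho^2\bigr)^{\alpha/2}+p\cdot(x-x_0)+A\,(t-t_1),
\]
where $C_0$ is (an enlargement of) the spatial H\"older constant produced by Theorem~\ref{thm-parabolic-space}, uniform in $t$, and the vector $p$ and the slope $A$ are chosen in the course of the argument. The steps are: (i) check that $\Psi\ge u$ on the parabolic boundary --- on the bottom this uses $|u(x,t_1)-u(x_0,t_1)|\le C_0|x-x_0|^\alpha$ together with $\bigl(|x-x_0|^2+\rho^2\bigr)^{\alpha/2}\ge|x-x_0|^\alpha$, while on the lateral part one again uses the spatial H\"older bound for $u(\cdot,t)$ and absorbs the leftover by enlarging $C_0$, the apparent circularity in the time variable being handled as in \cite[Lemma~9.1]{BBL02}; (ii) check that $\Psi$ is a \emph{strict} supersolution of the first inequality in \eqref{eq-parabolic-large}, which, since $\|D^2\Psi\|=O\bigl(C_0\rho^{\alpha-2}\bigr)$ and $|\nabla\Psi|=O\bigl(C_0\rho^{\alpha-1}+|p|\bigr)$ on the cylinder, holds as soon as $A\ge K+C(n,\Lambda)\,C_0\rho^{\alpha-2}$; (iii) invoke the comparison principle for \eqref{eq-parabolic-large} (see \cite{IS16,CIL92}) to obtain $u\le\Psi$ on the cylinder, and evaluate at $(x_0,t_2)$ to get $u(x_0,t_2)-u(x_0,t_1)\le C_0\rho^\alpha+A\,h\le C_0\rho^\alpha+C(n,\Lambda)\,C_0\rho^{\alpha-2}h+Kh$. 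Optimizing the right-hand side over $\rho$ --- the two $\rho$-dependent terms balance precisely at $\rho\sim h^{1/2}$ --- produces the exponent $\alpha/2$ and the stated bound.

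The main obstacle is step (ii) together with the very applicability of the comparison principle, and this is where \eqref{eq-parabolic-large} genuinely departs from the classical framework of \cite{BBL02}: because $\mathcal{M}_\ast^+(M,q)=+\infty$ when $|q|<\gamma$, a smooth function can never be a supersolution at a point where its own gradient is small, and at such a point the comparison argument extracts no contradiction. This is precisely the role of the linear correction $p\cdot(x-x_0)$, which is absent from the classical barrier: one chooses $|p|$ comparable to $\gamma$ plus the maximal size on $B_R(x_0)$ of $\nabla\bigl(C_0(|x-x_0|^2+\rho^2)^{\alpha/2}\bigr)$, so that $|\nabla\Psi|\ge\gamma$ throughout the cylinder and $\Psi$ lies in the class to which the comparison principle applies; one must then revisit (i) to verify that this $p$, whose contribution to $\Psi$ on $B_R(x_0)$ is of order $|p|R$, does not spoil the boundary ordering, which is why $R$ is kept of order $\rho$ and $C_0$ is taken large. (Alternatively, in the spirit of \cite{IS16}, one may graft a downward corner onto $\Psi$ at the critical point, so that $\Psi$ cannot be touched from below there and the supersolution requirement becomes vacuous at that point.) The rest --- uniformity of the spatial H\"older constant over the time interval, its behavior under rescaling, and the passage to the joint modulus --- is routine.
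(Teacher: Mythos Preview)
Both your proposal and the paper follow the barrier-comparison scheme of \cite[Lemma~9.1]{BBL02}, but the paper's execution is much simpler: it uses $\varphi(x,t)=u(0,t_0)+\eta+L_1(t-t_0)+L_2|x|^2$ on the \emph{fixed} cylinder $\overline B_{1/2}\times[t_0,0]$, so the lateral ordering comes from the $L^\infty$ bound alone ($L_2\ge 8$ gives $L_2|x|^2\ge 2\ge\operatorname{osc}u$ on $|x|=1/2$), the bottom ordering from balancing $[u]_{C^\alpha_x}|x|^\alpha$ against $\eta+L_2|x|^2$ via $L_2\sim\eta^{(\alpha-2)/\alpha}$, and after comparison one evaluates at $x=0$ and optimizes over the free parameter $\eta$ to produce the exponent $\alpha/2$. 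No regularized H\"older profile, no $\rho$, no shrinking cylinder.

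Your small-cylinder choice $R\sim\rho\sim h^{1/2}$ introduces a genuine gap on the lateral face that the appeal to \cite{BBL02} does not close. There $u(x,t)-u(x_0,t_1)=[u(x,t)-u(x_0,t)]+[u(x_0,t)-u(x_0,t_1)]$; the first bracket is $\le C_*R^\alpha$ by Theorem~\ref{thm-parabolic-space}, but the second is the very time modulus being estimated. Feeding it into $C_0$ and rerunning the argument returns an inequality of the form $\omega(h)\le c\,\omega(h)+Ch^{\alpha/2}$ with $c>1$, so nothing follows; and \cite{BBL02} itself avoids this by working on a fixed domain with the $L^\infty$ trick, exactly as the paper does here. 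Your observation about the degeneracy of $\mathcal M_*^\pm$ at small gradient is well taken---the paper's barrier has $\nabla\varphi=0$ at the origin and the paper does not address what happens if the contact point lands in $\{|x|<\gamma/(2L_2)\}$---but your proposed remedies do not resolve it either: with $R\sim\rho$ the required size $|p|\sim\gamma+C_0\rho^{\alpha-1}$ makes $|p|R$ of the same order $C_0\rho^\alpha$ as the barrier itself on the boundary, and grafting a downward corner onto $\Psi$ forces the maximum of $u-\Psi$ to sit precisely at the corner (since $u$ is only $C^\alpha$ while the cusp is Lipschitz), where the superjet of $\Psi$ is empty and no contradiction can be drawn from the subsolution property of $u$.
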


\begin{proof}
As usual, we may assume that $\|u\|_{L^{\infty}(Q_1)}+K=1$. We claim that, for any $\eta>0$ and $t_0 \in [-1/4, 0)$, we can find constants $L_1, L_2>0$ large enough such that 
\begin{equation}\label{eq-claim-holder-time}
    u(x, t)-u(0, t_0) \leq \eta+L_1(t-t_0)+L_2|x|^2 \eqqcolon \varphi(x, t) \quad \text{for every $(x, t) \in \overline B_{1/2} \times [t_0, 0]$}.
\end{equation}
We first choose $L_2 \geq 8$ so that \eqref{eq-claim-holder-time} holds for $x \in \partial B_{1/2}$. We next choose $L_2$ larger to ensure that \eqref{eq-claim-holder-time} holds for $t=t_0$. To this end, we apply Theorem~\ref{thm-parabolic-space} to find that 
\begin{equation*}
    u(x, t_0)-u(0, t_0) \leq [u]_{C_x^{\alpha}}|x|^{\alpha}.
\end{equation*}
It is enough to choose
\begin{equation*}
    [u]_{C_x^{\alpha}}|x|^{\alpha} \leq \eta+L_2|x|^2,
\end{equation*}
which holds true if
\begin{equation*}
    L_2 \geq \eta^{\frac{\alpha-2}{\alpha}}[u]_{C_x^{\alpha}}^{\frac{2}{\alpha}}.
\end{equation*}
Thus, by choosing $L_2=8+\eta^{\frac{\alpha-2}{\alpha}}[u]_{C_x^{\alpha}}^{\frac{2}{\alpha}}$, we obtain that \eqref{eq-claim-holder-time} is satisfied on the parabolic boundary $(\partial B_{1/2} \times [t_0, 0]) \cup (\overline B_{1/2} \times \{t_0\})$. 

On the other hand, it is easy to check that a smooth function $\varphi$ is a supersolution of \eqref{eq-parabolic-large} if $L_1 \geq 2\lambda L_2$. Therefore, the definition of viscosity solutions yields that
\begin{equation*}
    \max_{Q_{1/2}} (u-\varphi) =\max_{\partial Q_{1/2}} (u-\varphi) \leq 0,
\end{equation*}
which implies \eqref{eq-claim-holder-time}. If we choose $x=0$, then we conclude that
\begin{equation*}
    u(0, t)-u(0, t_0) \leq \eta+2\lambda \left(8+\eta^{\frac{\alpha-2}{\alpha}}[u]_{C_x^{\alpha}}^{\frac{2}{\alpha}} \right)(t-t_0).
\end{equation*}
A simple optimization with respect to $\eta$ of the right-hand side of the previous inequality shows that, for all $t \in [t_0, 0]$, 
\begin{equation*}
    u(0, t)-u(0, t_0) \leq \tilde C [u]_{C_x^{\alpha}} (t-t_0)^{\frac{\alpha}{2}}
\end{equation*}
for some constant $\tilde C$ depending only on $\lambda$.
\end{proof}

\section{Quantitative H\"older exponent bound in Evans--Krylov theory}\label{sec-EK}
Let $u \in C(B_1)$ be a viscosity solution of
\begin{equation*}
    F(D^2u)=0 \quad \text{in $B_1$},
\end{equation*}
where $F$ is uniformly $(\lambda, \Lambda)$-elliptic and concave. The celebrated Evans--Krylov theory says that $u \in C^{2, \alpha}(\overline B_{1/2})$ for \emph{some} $\alpha=\alpha(n, \lambda, \Lambda) \in (0,1)$. The goal of this section is to suggest a quantitative bound for the H\"older exponent $\alpha$ in terms of $n$ and $\Lambda/\lambda$. The key step is to track down the dependence on $n$, $\lambda$ and $\Lambda$ by employing the Bernstein technique for viscosity solutions $u$ of $F(D^2u)=0$; see \cite[Section~9]{CC95} for instance. 

\begin{remark}[Regularization]\label{rmk-mollification}
     In order to utilize the Bernstein technique, we regularize the operator $F$ by standard mollification as in \cite{LLY24}. To be precise, we extend the domain of $F$ from $\mathcal{S}^n$ to $\mathbb{R}^{n^2}$ by considering $F(M)=F\left(\frac{M+M^T}{2}\right)$. We also let $\psi \in C_c^{\infty}(\mathbb{R}^{n^2})$ be a standard mollifier satisfying $\int_{\mathbb{R}^{n^2}}\psi \,dM=1$ and $\mathrm{supp} \,\psi  \subset \{M \in \mathbb{R}^{n^2} : \sum_{i,j=1}^n M_{ij}^2 \leq 1\}$, and define $\psi_{\varepsilon}(M)=\varepsilon^{-n^2}\psi(M/\varepsilon)$. If we define $F^{\varepsilon}$ as
\begin{equation*}
	F^{\varepsilon}(M) \coloneqq F \ast \psi_{\varepsilon}(M)=\int_{\mathbb{R}^{n^2}} F(M-N) \psi_{\varepsilon}(N)\,dN.
\end{equation*}
It is easy to check that $F^{\varepsilon}$ is uniformly elliptic (with the same ellipticity constants $\lambda, \Lambda$) and smooth. Moreover, $F^{\varepsilon}$ is concave whenever $F$ is concave. Finally, since $F$ is Lipschitz continuous, $F^{\varepsilon}$ converges to $F$ uniformly. Therefore,  we may consider a smooth operator $F^{\varepsilon}$, instead of $F$, in several lemmas of this section.
\end{remark}

We recall the following lemma concerning a linearized operator $L$ of concave and smooth $F$, which plays a crucial role in the Bernstein technique.
\begin{lemma}[{\cite[Lemma~9.2]{CC95}}]\label{lem-linearized}
    Let $F : \mathcal{S}^n \to \mathbb{R}$ be $(\lambda, \Lambda)$-elliptic, concave and smooth with $F(0)=0$. Let $u \in C^4(\Omega)$ satisfy $F(D^2u)=0$ in $\Omega$. Consider the uniformly elliptic operator in $\Omega$
    \begin{equation*}
        Lv = a_{ij}(x)v_{ij} \coloneqq F_{ij}(D^2u(x))v_{ij}.
    \end{equation*}
    Then, for any $e \in \mathbb{R}^n$ with $|e|=1$, 
    \begin{equation*}
        Lu \leq 0, \quad Lu_e=0 \quad \text{and} \quad Lu_{ee} \geq 0 \quad \text{in $\Omega$}.
    \end{equation*}
\end{lemma}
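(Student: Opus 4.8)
The plan is to differentiate the equation $F(D^2u)=0$ once and twice in the direction $e$, and to feed in the two manifestations of concavity of $F$: the tangent‑plane inequality for $DF$, and the negative semidefiniteness of $D^2F$. Before doing so I record the facts to be used: after the symmetrized extension of $F$ (as in Remark~\ref{rmk-mollification}) the coefficients $F_{ij}(M)$ are symmetric in $(i,j)$ and satisfy $\lambda|\xi|^2\le F_{ij}(M)\xi_i\xi_j\le\Lambda|\xi|^2$, so $L$ is uniformly $(\lambda,\Lambda)$-elliptic; and since $u\in C^4$, all derivatives below exist classically and mixed partials commute, so $\partial_e u_{ij}=(u_e)_{ij}$ and $\partial_{ee}u_{ij}=(u_{ee})_{ij}$.

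For $Lu_e=0$, differentiate the equation in the direction $e$: by the chain rule $0=\partial_e\bigl(F(D^2u)\bigr)=F_{ij}(D^2u)\,\partial_e u_{ij}=F_{ij}(D^2u)\,(u_e)_{ij}=Lu_e$. Differentiating once more in the direction $e$ gives
\[
0=\partial_{ee}\bigl(F(D^2u)\bigr)=F_{ij,kl}(D^2u)\,(u_e)_{ij}(u_e)_{kl}+F_{ij}(D^2u)\,(u_{ee})_{ij},
\]
so that $Lu_{ee}=-F_{ij,kl}(D^2u)\,(u_e)_{ij}(u_e)_{kl}$. Since $\bigl((u_e)_{ij}\bigr)$ is a symmetric matrix and concavity of $F$ is exactly the statement that its Hessian is negative semidefinite as a quadratic form on symmetric matrices, the right‑hand side is $\ge 0$, which gives $Lu_{ee}\ge 0$.

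For $Lu\le 0$, I would use the tangent‑plane inequality valid for any concave $F$, namely $F(M)\le F(N)+F_{ij}(N)(M_{ij}-N_{ij})$ for all $M,N\in\mathcal{S}^n$. Applying it with $M=0$ and $N=D^2u(x)$, and using $F(0)=0$ together with $F(D^2u)=0$, yields $0=F(0)\le F(D^2u)-F_{ij}(D^2u)\,u_{ij}=-Lu$, i.e. $Lu\le 0$ in $\Omega$.

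I do not expect a genuine obstacle: the argument is essentially two differentiations plus two standard consequences of concavity. The only points requiring care are the bookkeeping of the symmetrized extension (so that the $F_{ij}$ are indeed the coefficients of $L$ and the Hessian inequality is applied to symmetric increments), and the observation that $C^4$ regularity of $u$ is precisely what licenses differentiating the equation twice and gives $Lu_{ee}$ a classical meaning — in the viscosity framework this lemma is therefore invoked for the mollified operator $F^\varepsilon$ and for smooth approximating solutions, as arranged in Remark~\ref{rmk-mollification}.
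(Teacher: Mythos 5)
Your proof is correct and is exactly the standard argument for this lemma (the paper itself gives no proof, just citing \cite[Lemma~9.2]{CC95}, and your derivation matches what is done there). All three parts — one differentiation for $Lu_e=0$, two differentiations plus negative semidefiniteness of $D^2F$ for $Lu_{ee}\ge 0$, and the tangent-plane inequality with $F(0)=0$ and $F(D^2u)=0$ for $Lu\le 0$ — are the intended ingredients, and your remarks about the symmetrized extension and about why $C^4$ regularity suffices are accurate.
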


We are now ready to derive uniform gradient and Hessian estimates with the aid of the Bernstein technique.
\begin{proposition}[Uniform gradient and Hessian estimates, {\cite[Proposition~9.3]{CC95}}]\label{prop-bernstein}
    Let $F : \mathcal{S}^n \to \mathbb{R}$ be $(\lambda, \Lambda)$-elliptic, concave and smooth with $F(0)=0$. Let $u \in C^4(\overline B_1)$ satisfy $F(D^2u)=0$ in $B_1$. Then
    \begin{equation*}
        \begin{aligned}
            \|\nabla u\|_{L^{\infty}(B_{1/2})} &\leq C_1\|u\|_{L^{\infty}(B_1)},\\
            \|D^2 u\|_{L^{\infty}(B_{1/2})} &\leq C_2\|\nabla u\|_{L^{\infty}(B_1)},
        \end{aligned}
    \end{equation*}
    where 
    \begin{equation*}
        \begin{aligned}
            C_1=C_1(n, \lambda, \Lambda)&=\sqrt{nC_0 \frac{\Lambda}{\lambda}+4C_0^2 \frac{\Lambda^2}{\lambda^2}},\\
            C_2=C_2(n, \lambda, \Lambda)&=\left(\frac{\Lambda}{\lambda}+1\right)C_1
        \end{aligned}
    \end{equation*}
    for a constant $C_0>0$ independent of $n$, $\lambda$ and $\Lambda$. 
\end{proposition}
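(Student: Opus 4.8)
The plan is to carry out the \emph{Bernstein technique} following \cite[Section~9]{CC95}, but keeping track of every constant. Since $F$ is assumed smooth and $u\in C^4(\overline B_1)$, no preliminary regularization is needed for the proposition itself; we may moreover normalize by adding a constant to $u$ (which alters neither $D^2u$, nor the equation $F(D^2u)=0$, nor the two asserted estimates) so that $0\le u\le 2\|u\|_{L^\infty(B_1)}$. The main tool is the linearized operator $Lv=a_{ij}(x)v_{ij}=F_{ij}(D^2u(x))v_{ij}$ from Lemma~\ref{lem-linearized}: it is uniformly elliptic with $\lambda I\le (a_{ij}(x))\le\Lambda I$, and it satisfies $Lu\le 0$, $Lu_e=0$ and $Lu_{ee}\ge 0$ for every unit vector $e$.

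\emph{Step 1: gradient estimate.} Put $w=|\nabla u|^2=\sum_k u_k^2$. Since $Lu_k=0$ (Lemma~\ref{lem-linearized}), one has $Lw=\sum_k L(u_k^2)=2\sum_k a_{ij}u_{ki}u_{kj}\ge 2\lambda\|D^2u\|_F^2\ge 0$, so $w$ is an $L$-subsolution. Fix a cutoff $\zeta\in C_c^\infty(B_1)$ with $\zeta\equiv 1$ on $B_{1/2}$, $0\le\zeta\le 1$, chosen so that $|\nabla\zeta|$ and $|D^2\zeta|$ obey explicit universal bounds --- these furnish the universal constant $C_0$ --- and take the Bernstein-type function $\varphi=\zeta^2 w+\mu u^2$ with a parameter $\mu>0$. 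Examining an interior maximum point $x_0$ of $\varphi$: the condition $\nabla\varphi(x_0)=0$ lets one rewrite the cross term $a_{ij}(\zeta^2)_iw_j$ so that it no longer contains $\nabla w$, and then $L\varphi(x_0)\le 0$ combined with $Lw\ge 2\lambda\|D^2u\|_F^2$, $L(u^2)\ge 2uLu+2\lambda w$ and $\lambda I\le (a_{ij})\le\Lambda I$ yields, after Cauchy--Schwarz and Young's inequalities with a well-chosen split, a closed algebraic inequality for $t\coloneqq \|\nabla u\|_{L^\infty(B_{1/2})}^2/\|u\|_{L^\infty(B_1)}^2$ of the shape $t\le a+b\sqrt t$ with $a,b$ explicit in $n,\lambda,\Lambda,C_0$. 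Solving it (completing the square) produces the bound with $C_1=\sqrt{nC_0\Lambda/\lambda+4C_0^2\Lambda^2/\lambda^2}$; the square-root form is exactly the fingerprint of this last algebraic step. (In the degenerate case $x_0\notin\supp\zeta$ one reads the bound off directly.)

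\emph{Step 2: Hessian estimate.} Taking $\|\nabla u\|_{L^\infty(B_1)}$ as given, we first bound the largest eigenvalue of $D^2u$ on $B_{1/2}$. Since each $u_{ee}$ is an $L$-subsolution by the concavity inequality $Lu_{ee}=-F_{ij,kl}(D^2u)u_{ije}u_{kle}\ge 0$, the same Bernstein scheme --- now with the lower-order term $\mu|\nabla u|^2$ replacing $\mu u^2$, so as to land on $\|\nabla u\|_{L^\infty(B_1)}$ on the right-hand side --- gives $\sup_{B_{1/2}}\max_{|e|=1}u_{ee}\le C_1\|\nabla u\|_{L^\infty(B_1)}$, i.e.\ an upper bound for every eigenvalue of $D^2u$ on $B_{1/2}$. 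To turn this one-sided control into a genuine bound for $\|D^2u\|$, we use the equation: $F(0)=0$ and $F(D^2u)=0$ together with uniform $(\lambda,\Lambda)$-ellipticity give $\mathcal{M}^-_{\lambda,\Lambda}(D^2u)\le 0\le\mathcal{M}^+_{\lambda,\Lambda}(D^2u)$, whence $\sum_i e_i^-(D^2u)\le(\Lambda/\lambda)\sum_i e_i^+(D^2u)$; combined with the upper bound on the positive eigenvalues this controls the most negative eigenvalue, and collecting the constants gives $\|D^2u\|_{L^\infty(B_{1/2})}\le(\Lambda/\lambda+1)C_1\|\nabla u\|_{L^\infty(B_1)}=C_2\|\nabla u\|_{L^\infty(B_1)}$.

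\emph{Main obstacle.} Conceptually nothing new is required --- the scheme is classical and the sign information is supplied by Lemma~\ref{lem-linearized} --- so the entire difficulty is the quantitative bookkeeping that \cite{CC95} absorbs into ``universal''. Two places are delicate. First, the cutoff must be chosen so that, once $\nabla\varphi(x_0)=0$ is used, no inverse powers of $\zeta$ survive in the cross terms; this forces $\zeta$ to be of the form $\eta^2$ (or an equivalent device) and is what makes the cutoff contribution reduce to the single constant $C_0$. Second, the Young's-inequality splittings, and then the passage from one-sided to two-sided eigenvalue control, have to be balanced so that the final inequalities close with \emph{exactly} the stated $C_1$ and $C_2$, not merely with some unspecified $C(n,\lambda,\Lambda)$; carrying this through for the $u_{ee}$-version, where $Lu_{ee}\ge 0$ is only an inequality, is the fussiest part.
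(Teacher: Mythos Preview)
Your overall plan---the Bernstein technique driven by the linearized operator $L$ from Lemma~\ref{lem-linearized}---matches the paper's, and your Step~2 (a one-sided bound on $u_{ee}$ followed by the Pucci upgrade to a two-sided bound, which is precisely the content of \cite[Lemma~6.4]{CC95}) is exactly what the paper does. Two points in Step~1, however, differ from the paper and create real trouble.

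First, your lower-order term $\mu u^2$ has the wrong sign. After normalizing $u\ge 0$, the identity $L(u^2)=2uLu+2a_{ij}u_iu_j$ carries the bad contribution $2uLu\le 0$ (since $Lu\le 0$), which you have no way to control. The paper instead takes $\delta(M-u)^2$ with $M=\sup_{B_1}u$: then $L\big((M-u)^2\big)=-2(M-u)Lu+2a_{ij}u_iu_j\ge 2\lambda|\nabla u|^2$, because $M-u\ge 0$ and $Lu\le 0$ now cooperate. This removes the uncontrolled term at no cost.

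Second, the paper does not examine an interior maximum point, use the first-order condition, or solve any inequality of the form $t\le a+b\sqrt{t}$. It simply shows that $h=\delta(M-u)^2+\varphi^2|\nabla u|^2$ satisfies $Lh\ge 0$ throughout $B_1$ once $\delta=nC_0\Lambda/\lambda+4C_0^2\Lambda^2/\lambda^2$; a single Cauchy--Schwarz/Young split on the cross term $8a_{ij}\varphi\varphi_iu_ku_{kj}$ is all that is needed. The maximum principle then gives $\sup_{B_{1/2}}|\nabla u|^2\le\sup_{\partial B_1}h\le\delta M^2$, so $C_1=\sqrt{\delta}$ falls out directly---the square root is not the ``fingerprint'' of completing the square, only of passing from $|\nabla u|^2$ to $|\nabla u|$. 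The Hessian step runs identically with $g=\sigma(u_e)^2+\varphi^2(u_{ee})^2$ on $\{u_{ee}>0\}$ and the \emph{same} $\sigma$, yielding $\|(u_{ee})_+\|_{L^\infty(B_{1/2})}\le\sqrt{\sigma}\,\|\nabla u\|_{L^\infty(B_1)}$ before the $(\Lambda/\lambda+1)$ factor. Your route might be salvageable, but the paper's version is shorter and delivers the stated constants without the bookkeeping you flag as the main obstacle.
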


\begin{proof}
    We take a cut-off function $\varphi \in C^{\infty}_0(B_1)$ such that $0 \leq \varphi \leq 1$ in $B_1$ and $\varphi \equiv 1$ in $B_{1/2}$. Moreover, 
    \begin{equation*}
         |D_i \varphi|+\|D^2\varphi\|\leq C_{0}=C_{0}(\varphi) \quad \text{in $B_1$ for any $1 \leq i\leq n$}.
    \end{equation*}
    For a particular choice of $\varphi$ and $C_0=C_0(\varphi)$, see Remark~\ref{rmk-C0} below.
    
    For $M \coloneqq \sup_{B_1}u$, we set 
    \begin{equation*}
        h=\delta (M-u)^2+\varphi^2|\nabla u|^2 \in C^3(\overline B_1),
    \end{equation*}
    where $\delta>0$ will be determined soon. In particular, we observe that
    \begin{equation*}
        L(\varphi^2)=2a_{ij}(\varphi \varphi_{ij}+\varphi_i\varphi_j) \geq -2n\|A\|\|D^2\varphi\|+2\lambda |\nabla \varphi|^2 \geq -2n\Lambda C_{0}. 
    \end{equation*}

    We then compute $Lh$, where $L$ is the linearized operator introduced in Lemma~\ref{lem-linearized}. By observing that $L(vw)=(Lv)w+v(Lw)+2a_{ij}v_iw_j$, we have
    \begin{equation*}
        \begin{aligned}
            Lh=2\delta(M-u)(-Lu)&+2\delta a_{ij}u_iu_j+|\nabla u|^2L(\varphi^2)\\
            &+2\varphi^2u_kLu_k+2\varphi^2a_{ij}u_{ki}u_{kj}+8a_{ij}\varphi\varphi_iu_ku_{kj}.
        \end{aligned}
    \end{equation*}
    Since $Lu\leq 0$, $Lu_k=0$ and $(a_{ij})$ has ellipticity $(\lambda, \Lambda)$ from Lemma~\ref{lem-linearized}, we apply the Cauchy--Schwarz inequality to find that
     \begin{equation*}
        \begin{aligned}
            Lh&\geq 2\delta \lambda |\nabla u|^2+|\nabla u|^2L(\varphi^2)+2\lambda \varphi^2\sum_{k, j} (u_{kj})^2+8a_{ij}\varphi\varphi_iu_ku_{kj}\\
            &\geq 2\delta \lambda |\nabla u|^2-2n\Lambda C_{0}|\nabla u|^2+2\lambda \varphi^2\sum_{k, j} (u_{kj})^2-8\Lambda C_{0}\varphi \sum_{k, j}|u_k||u_{kj}|\\
            &\geq 2\delta \lambda |\nabla u|^2-2n\Lambda C_{0}|\nabla u|^2-8C_{0}^2\frac{\Lambda^2}{\lambda}|\nabla u|^2.
        \end{aligned}
    \end{equation*}
    In particular, $Lh \geq 0$ if we choose 
    \begin{equation*}
        \delta=nC_0 \frac{\Lambda}{\lambda}+4C_0^2 \frac{\Lambda^2}{\lambda^2}.
    \end{equation*}
    It then follows from the maximum principle that
    \begin{equation*}
        \sup_{B_{1/2}}|\nabla u|^2 \leq \sup_{B_1}h=\sup_{\partial B_1}h \leq \delta M^2,
    \end{equation*}
    which proves the desired gradient bound.\newline

    We next show the Hessian bound. Consider for $e \in \mathbb{R}^n$ with $|e|=1$ and $\sigma>0$, 
    \begin{equation*}
        g=\sigma (u_e)^2+\varphi^2(u_{ee})^2 \in C^2(\overline \Omega),
    \end{equation*}
    where $\Omega=\{x \in B_1 : u_{ee}(x)>0\}$. We compute $Lg$ in $\Omega$ by a similar argument as before:
    \begin{equation*}
        \begin{aligned}
            Lg&=2\sigma u_eLu_e+2\sigma a_{ij}u_{ei}u_{ej}+(u_{ee})^2L(\varphi^2)\\
            &\qquad+2\varphi^2u_{ee}Lu_{ee}+2\varphi^2a_{ij}u_{eei}u_{eej}+8a_{ij}\varphi \varphi_i u_{ee} u_{eej}\\
            &\geq 2\sigma \lambda \sum_j (u_{ej})^2-2n\Lambda C_0 (u_{ee})^2+2\lambda \varphi^2 \sum_j (u_{eej})^2-8\Lambda C_0\varphi u_{ee}\sum_j u_{eej}\\
            &\geq 2\sigma \lambda \sum_j (u_{ej})^2-2n\Lambda C_0 (u_{ee})^2-8C_0^2\frac{\Lambda^2}{\lambda} (u_{ee})^2
        \end{aligned}
    \end{equation*}
    Hence, $Lg \geq 0$ if we choose 
    \begin{equation*}
        \sigma=nC_0 \frac{\Lambda}{\lambda}+4C_0^2 \frac{\Lambda^2}{\lambda^2}.
    \end{equation*}
    We also note that $\partial \Omega \subset \partial B_1 \cup \{x \in B_1 : u_{ee}(x)=0\}$. It then follows from the maximum principle that
    \begin{equation*}
        \sup_{B_{1/2}}(u_{ee})_+^2=\sup_{\Omega \cap B_{1/2}}(u_{ee})^2 \leq \sup_{\Omega}g=\sup_{\partial \Omega}g \leq \sigma \sup_{B_1} (u_e)^2 \leq \sigma \|\nabla u\|^2_{L^{\infty}(B_1)}.
    \end{equation*}
    In other words, we arrive at
    \begin{equation*}
        \|(u_{ee})_+\|_{L^{\infty}(B_{1/2})} \leq \sqrt{\sigma} \|\nabla u\|_{L^{\infty}(B_1)}
    \end{equation*}
    and \cite[Lemma~6.4]{CC95} gives that
    \begin{equation*}
        \frac{\lambda}{\Lambda+\lambda}\|D^2u\|_{L^{\infty}(B_{1/2})} \leq \sqrt{\sigma} \|\nabla u\|_{L^{\infty}(B_1)}
    \end{equation*}
    as desired.
\end{proof}

We provide a few remarks on Proposition~\ref{prop-bernstein} as follows.
\begin{remark}[Particular choice of a cut-off function $\varphi$]\label{rmk-C0}
   We let
   \begin{equation*}
       \chi(s)=
       \begin{cases}
           0, & s \leq -1,\\
           \frac{1}{\int_{-1}^1 \exp(-1/(1-t^2)) \,\mathrm{d}t}\int_{-1}^s \exp(-1/(1-t^2))\,\mathrm{d}t, & -1<s<1,\\
           1, & s \geq 1.
       \end{cases}
   \end{equation*}
   Then we set
   \begin{equation*}
       s(r) \coloneqq 4r-3 \quad \text{(so that $s(1/2)=-1$ and $s(1)=1$)}
   \end{equation*}
   and
   \begin{equation*}
       \varphi(x) \coloneqq 1-\chi(s(|x|)).
   \end{equation*}
   It is immediate to check that such $\varphi$ satisfies all the conditions given in the proof of Proposition~\ref{prop-bernstein}. Moreover, it follows from a direct calculation that
   \begin{equation*}
       |D_i\varphi|+\|D^2\varphi\| \leq 33 \quad \text{for any $1 \leq i \leq n$.}
   \end{equation*}
   In conclusion, we may set $C_0=33$ with this choice of a cut-off function.
\end{remark}

\begin{remark}[Scaled and combined version]\label{rmk-scaling}
    For the later use, we need a scaled and combined version of estimates proposed in Proposition~\ref{prop-bernstein}. In fact, we observe that $u_r(x)=u(rx+x_0)$ is a solution of $F_r(D^2u_r)=0$ in $B_1$, where $F_r(M) \coloneqq r^2F(M/r^2)$ whenever $B_{r}(x_0) \subset B_1$. By applying the estimates from Proposition~\ref{prop-bernstein} for $u_r$, we have
     \begin{equation*}
        \begin{aligned}
            \|\nabla u\|_{L^{\infty}(B_{r/2}(x_0))} &\leq \frac{C_1}{r}\|u\|_{L^{\infty}(B_r(x_0))},\\
            \|D^2 u\|_{L^{\infty}(B_{r/2}(x_0))} &\leq \frac{C_2}{r}\|\nabla u\|_{L^{\infty}(B_r(x_0))}.
        \end{aligned}
    \end{equation*}
    For any $x_0 \in B_{3/4}$, set $r=1/4$ so that $B_r(x_0) \subset B_1$. Then we arrive at
    \begin{equation*}
         \|D^2 u\|_{L^{\infty}(B_{r/4}(x_0))} \leq \frac{2C_2}{r}\|\nabla u\|_{L^{\infty}(B_{r/2}(x_0))} \leq \frac{2C_1C_2}{r^2}\|u\|_{L^{\infty}(B_r(x_0))},
    \end{equation*}
    and so
    \begin{equation*}
        \|D^2 u\|_{L^{\infty}(B_{3/4})}  \leq 32C_1C_2\|u\|_{L^{\infty}(B_1)}.
    \end{equation*}
\end{remark}

\begin{remark}[Rough bound for $C_1C_2$]\label{rmk-rough}
    Since we will let $\Lambda/\lambda$ be sufficiently close to $1$, we may assume that $\Lambda/\lambda \leq 2$ in the remainder of this section. Then we have a rather simple bound for $C_1C_2$:
    \begin{equation*}
        C_1C_2=\left(\frac{\Lambda}{\lambda}+1\right) \left( nC_0 \frac{\Lambda}{\lambda}+4C_0^2 \frac{\Lambda^2}{\lambda^2} \right) \leq 6(nC_0+8C_0^2) \, (\sim n).
    \end{equation*}
\end{remark}

\begin{lemma}[Approximation lemma]\label{lem-approximation}
Let $F : \mathcal{S}^n \to \mathbb{R}$ be $(\lambda, \Lambda)$-elliptic, concave and smooth with $F(0)=0$. Moreover, let $C_1$, $C_2$ be the constants chosen in Proposition~\ref{prop-bernstein}, and let $u \in C(B_1)$ be a viscosity solution of
\begin{equation*}
    F(D^2u)=0 \quad \text{in $B_1$}.
\end{equation*}
Given $\varepsilon \in (0,1/12)$, if
\begin{equation*}
    \frac{\Lambda}{\lambda}-1 \leq \frac{\varepsilon}{9C_1C_2},
\end{equation*}
then there exists a harmonic function $h \in C^2(\overline B_{3/4})$ such that
\begin{equation*}
    \|u-h\|_{L^{\infty}(B_{3/4})} \leq \varepsilon\|u\|_{L^{\infty}(B_1)}.
\end{equation*}
\end{lemma}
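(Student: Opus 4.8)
The plan is to turn the two a priori estimates already available---the Bernstein-type Hessian bound of Remark~\ref{rmk-scaling} and the mean value property of harmonic functions---into a quantitative ``almost harmonicity'' of $u$, and then close the argument with an explicit barrier for the Laplacian, so that no non-explicit constant ever appears. After normalizing $\|u\|_{L^\infty(B_1)}=1$, observe that the smallness hypothesis forces $\Lambda/\lambda-1<1$, hence $\Lambda/\lambda<2$ as in Remark~\ref{rmk-rough}. Since $F$ is smooth, $(\lambda,\Lambda)$-elliptic and concave, the classical Evans--Krylov theorem (Theorem~\ref{thm-EK}) gives $u\in C^{2,\alpha}_{\mathrm{loc}}(B_1)$, and differentiating $F(D^2u)=0$ and bootstrapping with interior Schauder estimates upgrades this to $u\in C^\infty_{\mathrm{loc}}(B_1)$. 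In particular $u$ is a classical solution on $\overline{B_{3/4}}$, so Remark~\ref{rmk-scaling} applies and yields $\|D^2u\|_{L^\infty(B_{3/4})}\le 32\,C_1C_2$.

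The next step is to estimate $\Delta u$. Since $F(D^2u)=0=F(0)$, the $(\lambda,\Lambda)$-ellipticity of $F$ gives the pointwise inequalities $\mathcal{M}^-_{\lambda,\Lambda}(D^2u)\le 0\le \mathcal{M}^+_{\lambda,\Lambda}(D^2u)$ in $B_{3/4}$. Writing $P=\sum e_i^+(D^2u)$ and $N=\sum e_i^-(D^2u)$, these read $\lambda P\le\Lambda N$ and $\lambda N\le\Lambda P$, whence $|\Delta u|=|P-N|\le(\Lambda/\lambda-1)\max\{P,N\}\le(\Lambda/\lambda-1)\,n\,\|D^2u\|_{L^\infty(B_{3/4})}$. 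Combining this with the Hessian bound, $\|\Delta u\|_{L^\infty(B_{3/4})}\le\delta:=32n(\Lambda/\lambda-1)C_1C_2$.

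Now let $h$ be the harmonic function in $B_{3/4}$ with $h=u$ on $\partial B_{3/4}$; since $u\in C^\infty(\overline{B_{3/4}})$, boundary Schauder estimates give $h\in C^\infty(\overline{B_{3/4}})$, in particular $h\in C^2(\overline{B_{3/4}})$ as required. The difference $w:=u-h$ satisfies $\Delta w=\Delta u$ in $B_{3/4}$ and $w=0$ on $\partial B_{3/4}$. Comparing $w$ with the explicit barriers $\pm\tfrac{\delta}{2n}\bigl((3/4)^2-|x|^2\bigr)$ via the maximum principle for the Laplacian gives $\|w\|_{L^\infty(B_{3/4})}\le\tfrac{\delta}{2n}\cdot\tfrac{9}{16}=\tfrac{9\delta}{32n}=9(\Lambda/\lambda-1)C_1C_2$. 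Invoking the hypothesis $\Lambda/\lambda-1\le\varepsilon/(9C_1C_2)$ then produces $\|u-h\|_{L^\infty(B_{3/4})}\le\varepsilon=\varepsilon\|u\|_{L^\infty(B_1)}$, which is exactly the claim; note that the factor $n$ coming from the eigenvalue sum is cancelled precisely by the $\tfrac{1}{2n}$ in the Laplace barrier, so the estimate stays fully quantitative.

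The only delicate point is the regularity bookkeeping in the first step: the Bernstein Hessian estimate of Proposition~\ref{prop-bernstein} and Remark~\ref{rmk-scaling} is stated for a $C^4$ (hence classical) solution, and the barrier comparison wants $h\in C^2(\overline{B_{3/4}})$, so one must first promote the viscosity solution $u$ to a smooth one, which is legitimate precisely because $F$ is smooth. I expect this to be the main obstacle to phrase cleanly; alternatively, one can run the entire argument for the mollified solutions $u^\varepsilon$, which are smooth by construction and are the objects to which the lemma will actually be applied in the proof of Theorem~\ref{thm-c2alpha-general}. Everything else is a routine barrier computation.
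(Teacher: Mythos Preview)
Your proof is correct and follows essentially the same route as the paper: normalize, use the scaled Bernstein Hessian bound from Remark~\ref{rmk-scaling}, convert $(\lambda,\Lambda)$-ellipticity plus $F(D^2u)=0$ into the pointwise estimate $|\Delta u|\le(\Lambda/\lambda-1)\,n\,\|D^2u\|$, compare $u-h$ against the quadratic barrier $\tfrac{\delta}{2n}((3/4)^2-|x|^2)$, and check that the resulting bound $9(\Lambda/\lambda-1)C_1C_2$ is $\le\varepsilon$. Your explicit discussion of the regularity bookkeeping (smoothness of $u$ via Evans--Krylov plus bootstrap, and of $h$ via boundary Schauder) is a welcome clarification that the paper leaves implicit, deferring it to the mollification step in the proof of Theorem~\ref{thm-c2alpha-general}.
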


\begin{proof}
By replacing $u$ with $u/\|u\|_{L^\infty(B_1)}$, we may assume that $\|u\|_{L^\infty(B_1)}\leq 1$. 

Let $M\in\mathcal{S}^n$ be a symmetric matrix with eigenvalues $\{e_i\}_{i=1}^n$. Then we observe that
\begin{equation}\label{eq:F-harmonic}
    \begin{aligned}
        |F(M)-\lambda \mathrm{tr} M|
&\leq \max\{\mathcal M_{\lambda , \Lambda}^+(M)-\lambda \mathrm{tr} M, \ \lambda \mathrm{tr} M-\mathcal M_{\lambda, \Lambda}^-(M)\}\\
&\leq  (\Lambda-\lambda)\left(\sum_{e_i>0}e_i+\sum_{e_i<0}(-e_i)\right)
 \leq (\Lambda-\lambda) n \|M\|.
    \end{aligned}
\end{equation}

We now let $h$ be the harmonic replacement of $u$ in $B_{3/4}$, i.e.,
\begin{equation*}
\left\{
    \begin{aligned}
        \Delta h&=0 && \text{in $B_{3/4}$}\\
        h&=u && \text{on $\partial B_{3/4}$}.
    \end{aligned}
    \right.
\end{equation*}
For $w \coloneqq u-h$ in $B_{3/4}$, we observe that
\begin{equation*}
    \Delta w=\Delta u-\Delta h=\Delta u,
\end{equation*}
and \eqref{eq:F-harmonic} implies that
\begin{equation*}
    |\Delta u| \leq \left(\frac{\Lambda}{\lambda}-1\right) n \|D^2u\|.
\end{equation*}
We next let $N:=\|\Delta u\|_{L^\infty(B_{3/4})}$ and consider the barrier
\begin{equation*}
    \phi(x)\coloneqq \frac{N}{2n}((3/4)^2-|x|^2)\quad\text{in }B_{3/4}.
\end{equation*}
Since $\Delta \phi=-N$, the comparison principle between $\pm w$ and $\phi$ in $B_{3/4}$ yields that
\begin{equation*}
    |w| \leq \phi \quad \text{in $B_{3/4}$} 
\end{equation*}
and so 
\begin{equation*}
    \|w\|_{L^{\infty} (B_{3/4})} \leq \frac{N}{2n} \left(\frac{3}{4}\right)^2.
\end{equation*}
Then we recall \eqref{eq:F-harmonic} and apply a scaled version of the interior $C^2$ estimate for $u$ (Remark~\ref{rmk-scaling}):
\begin{equation*}
    N \leq \left(\frac{\Lambda}{\lambda}-1\right) n \|D^2u\|_{L^{\infty}(B_{3/4})} \leq 32\left(\frac{\Lambda}{\lambda}-1\right)n C_1 C_2.
\end{equation*}
We finally arrive at
\begin{equation}\label{eq:w-Linfty}
    \|u-h\|_{L^\infty(B_{3/4})} \leq  9\left(\frac{\Lambda}{\lambda}-1\right) C_1 C_2 \leq \varepsilon,
\end{equation}
under the assumption on the ellipticity ratio.
\end{proof}

We now prove the following theorem, which is corresponding to Theorem~\ref{thm-c2alpha-general} when $F$ is smooth.
\begin{theorem}[$C^{2,\alpha}$ estimate; smooth $F$]\label{thm-c2alpha-smooth}
   Let $F : \mathcal{S}^n \to \mathbb{R}$ be $(\lambda, \Lambda)$-elliptic, concave and smooth with $F(0)=0$. Moreover, $C_1$, $C_2$ be the constants chosen in Proposition~\ref{prop-bernstein}, and let $u \in C(B_1)$ be a viscosity solution of
\begin{equation*}
    F(D^2u)=0 \quad \text{in $B_1$}.
\end{equation*}
Given $\alpha \in (0,1)$, if
\begin{equation*}
    \frac{\Lambda}{\lambda}-1 \leq \frac{1}{18C_1 C_2} (576n^3)^{-\frac{2+\alpha}{1-\alpha}},
\end{equation*}
then $u\in C^{2,\alpha}(\overline B_{1/2})$ with the uniform estimate
\begin{equation*}
    \|u\|_{C^{2,\alpha}(\overline B_{1/2})} \leq C\|u\|_{L^\infty(B_1)},
\end{equation*}
where $C>0$ depends only on $n$, $\lambda$, $\Lambda$ and $\alpha$.
\end{theorem}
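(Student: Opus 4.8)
The plan is to run the Schauder-type perturbation iteration advertised in the introduction, with the approximation Lemma~\ref{lem-approximation} as the single analytic input at each dyadic scale. Since $F$ is smooth, the bare $C^{2,\alpha}$-regularity of $u$ (for \emph{some} exponent) is already classical Evans--Krylov (Theorem~\ref{thm-EK}) followed by linear Schauder bootstrapping, so the content to be proved is the quantitative control of the exponent; equivalently, after translating and rescaling, it suffices to produce a second-order Taylor expansion of $u$ at the origin with a $C^{2,\alpha}$ remainder whose size depends only on $n,\lambda,\Lambda,\alpha$. Normalize $\|u\|_{L^\infty(B_1)}\le1$ and fix $\rho\in(0,1/2)$ and $\varepsilon\in(0,1/12)$ subject to $\varepsilon+576\,n^3\rho^3\le\tfrac12\rho^{2+\alpha}$; this forces $\rho$ of order $(576n^3)^{-1/(1-\alpha)}$ and $\varepsilon$ of order $\rho^{2+\alpha}$, so that the smallness hypothesis on $\Lambda/\lambda-1$ in the statement becomes, up to the choice of the universal constant, exactly the hypothesis $\Lambda/\lambda-1\le\varepsilon/(9C_1C_2)$ of Lemma~\ref{lem-approximation} (recall that $C_1C_2$ is of order $n$ by Remark~\ref{rmk-rough}, so that $9C_1C_2$ is of order $n^2$).

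I would prove by induction the existence of quadratic polynomials $P_k$ ($k\ge0$, $P_0\equiv0$) such that, with $M_k\coloneqq D^2P_k$,
\begin{equation*}
    F(M_k)=0,\qquad \|u-P_k\|_{L^\infty(B_{\rho^k})}\le\rho^{k(2+\alpha)},\qquad \|M_{k+1}-M_k\|\le C(n)\rho^{k\alpha},
\end{equation*}
together with analogous geometric control of the lower-order coefficients of $P_{k+1}-P_k$. Granting this, the coefficients of $P_k$ are Cauchy, $P_k\to P_\infty$ with Hessian $M^*\coloneqq D^2P_\infty$ and $F(M^*)=0$, and summing the increments gives $\|u-P_\infty\|_{L^\infty(B_r)}\le C(n,\alpha)\,r^{2+\alpha}$ for all small $r$, which is the required pointwise estimate. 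For the inductive step I would rescale $v_k(x)\coloneqq\rho^{-k(2+\alpha)}(u-P_k)(\rho^k x)$ on $B_1$, so that $\|v_k\|_{L^\infty(B_1)}\le1$ and, since $D^2u(\rho^kx)=M_k+\rho^{k\alpha}D^2v_k(x)$, the function $v_k$ is a viscosity solution of $\widetilde F_k(D^2v_k)=0$ with $\widetilde F_k(N)\coloneqq\rho^{-k\alpha}F(\rho^{k\alpha}N+M_k)$. Using $1$-homogeneity of $\mathcal M^\pm_{\lambda,\Lambda}$, the operator $\widetilde F_k$ is again $(\lambda,\Lambda)$-elliptic, concave and smooth, and --- and this is the point of carrying the bookkeeping constraint $F(M_k)=0$ --- satisfies $\widetilde F_k(0)=\rho^{-k\alpha}F(M_k)=0$; hence Lemma~\ref{lem-approximation} applies (with the \emph{same} smallness threshold for every $k$) and yields a harmonic $h_k\in C^2(\overline B_{3/4})$ with $\|v_k-h_k\|_{L^\infty(B_{3/4})}\le\varepsilon$ and $\|h_k\|_{L^\infty(B_{3/4})}\le2$. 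Letting $Q_k$ be the second-order Taylor polynomial of $h_k$ at $0$, one has $\mathrm{tr}\,D^2Q_k=\Delta h_k(0)=0$, while Lemma~\ref{lem-est-harmonic} (with $|\theta|=3$, together with Taylor's theorem, and with $|\theta|=1,2$) gives $\|h_k-Q_k\|_{L^\infty(B_\rho)}\le576\,n^3\rho^3$ and $|h_k(0)|+|\nabla h_k(0)|+\|D^2Q_k\|\le C(n)$. Thus $\|v_k-Q_k\|_{L^\infty(B_\rho)}\le\varepsilon+576n^3\rho^3\le\tfrac12\rho^{2+\alpha}$.

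The hard part --- and, I expect, the only genuine obstacle --- is that $D^2Q_k$ is traceless but not $F$-admissible: $M_k+\rho^{k\alpha}D^2Q_k$ need not lie on the level set $\{F=0\}$, so if we simply set $M_{k+1}=M_k+\rho^{k\alpha}D^2Q_k$ the next rescaled operator would fail $\widetilde F_{k+1}(0)=0$ and the resulting inhomogeneity $\rho^{-(k+1)\alpha}F(M_{k+1})$ would blow up with $k$ (it stays only of size $O(\Lambda/\lambda-1)$, which is not summable against $\rho^{-(k+1)\alpha}$). I would fix this by projecting along the identity direction: the map $t\mapsto F\bigl(M_k+\rho^{k\alpha}(D^2Q_k+tI)\bigr)$ is strictly increasing with derivative equal to $\rho^{k\alpha}$ times a quantity in $[n\lambda,n\Lambda]$, and its value at $t=0$ has modulus at most $\bigl|\mathcal M^+_{\lambda,\Lambda}(\rho^{k\alpha}D^2Q_k)\bigr|=(\Lambda-\lambda)\rho^{k\alpha}\sum_i e_i^+(D^2Q_k)\le(\Lambda-\lambda)\tfrac n2\rho^{k\alpha}\|D^2Q_k\|$, where tracelessness is used (so $\sum_ie_i^+=\sum_ie_i^-$). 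Hence there is a unique $t_k$ with $F\bigl(M_k+\rho^{k\alpha}(D^2Q_k+t_kI)\bigr)=0$ and $|t_k|\le(\Lambda/\lambda-1)\,C(n)$. Setting $M_{k+1}\coloneqq M_k+\rho^{k\alpha}(D^2Q_k+t_kI)$ and $P_{k+1}(y)\coloneqq P_k(y)+\rho^{k(2+\alpha)}\bigl[Q_k(y/\rho^k)+\tfrac{t_k}{2}|y/\rho^k|^2\bigr]$ keeps $F(M_{k+1})=0$ and $\|M_{k+1}-M_k\|\le C(n)\rho^{k\alpha}$; moreover, because $(\Lambda/\lambda-1)C(n)\rho^2\le\rho^{2+\alpha}$ holds under the hypothesis (this is precisely where $\Lambda/\lambda-1$ being much smaller than $\rho^\alpha$ is exploited), we get $\|v_k-Q_k-\tfrac{t_k}{2}|\cdot|^2\|_{L^\infty(B_\rho)}\le\tfrac12\rho^{2+\alpha}+\tfrac{|t_k|}{2}\rho^2\le\rho^{2+\alpha}$, and rescaling back closes the induction.

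Finally, running this argument at every $x_0\in B_{1/2}$ (applied to $u$ restricted to $B_{1/2}(x_0)\subset B_1$) yields a uniform second-order Taylor expansion of $u$ with $C^\alpha$ control of $D^2u$ on $\overline B_{1/2}$, and the norm bound $\|u\|_{C^{2,\alpha}(\overline B_{1/2})}\le C(n,\lambda,\Lambda,\alpha)\|u\|_{L^\infty(B_1)}$ follows by interior interpolation. To summarize the balance of difficulty: the compatibility bookkeeping just described is the crux, since the approximation lemma --- and, beneath it, the Bernstein-based interior $C^2$ estimate of Proposition~\ref{prop-bernstein} --- is available only for operators vanishing at the origin; everything else is a matter of tracking the universal constants, namely the factor $9C_1C_2$ coming out of Lemma~\ref{lem-approximation} (with $C_1,C_2$ as in Proposition~\ref{prop-bernstein} and Remark~\ref{rmk-rough}) and the factor $576\,n^3$ coming out of the cubic harmonic Taylor estimate (Lemma~\ref{lem-est-harmonic}), which together pin down the threshold $\frac{1}{18C_1C_2}(576n^3)^{-(2+\alpha)/(1-\alpha)}$.
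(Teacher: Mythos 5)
Your proposal is correct and follows the same route as the paper: the one--step harmonic approximation via Lemma~\ref{lem-approximation}, followed by the Schauder-type polynomial iteration. The paper stops after the one--step estimate $\|u-P\|_{L^\infty(B_{r_0})}\le r_0^{2+\alpha}$ and refers to the iteration as standard, whereas you have explicitly carried it out and correctly identified (and resolved, via the projection $t_k$ along the identity with $|t_k|\lesssim(\Lambda/\lambda-1)$) the genuine subtlety that each rescaled operator must satisfy $\widetilde F_k(0)=F(M_k)=0$ in order to reapply Lemma~\ref{lem-approximation}, a point the paper's terse reference leaves implicit.
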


\begin{proof}
As in the proof of Lemma~\ref{lem-approximation}, we may assume that $\|u\|_{L^{\infty}(B_1)}\leq 1$. By applying Lemma~\ref{lem-approximation}, there exists a harmonic function $h \in C^2(\overline B_{3/4})$ such that
\begin{equation}\label{eq-approxlemma}
    \|u-h\|_{L^{\infty}(B_{3/4})} \leq \varepsilon,
\end{equation}
provided that 
\begin{equation*}
    \frac{\Lambda}{\lambda}-1 \leq \frac{\varepsilon}{9C_1 C_2} \quad \text{for $\varepsilon>0$ will be determined soon}.
\end{equation*}
 We note that $\|h\|_{L^{\infty}(B_{3/4})} \leq \|u\|_{L^{\infty}(B_1)} \leq 1$ by its construction. In particular, $h$ has a quadratic Taylor polynomial $P$ at the origin, which satisfies $\Delta P=0$ and 
\begin{equation*}
    |P|=\left|h(0)+Dh(0) \cdot x+\frac{1}{2}x^T D^2h(0)x\right| \leq 1+3n+18n^2 \leq c_0 \quad \text{in $B_{3/4}$}.
\end{equation*}
Moreover, since $h$ is harmonic, we have
\begin{equation}\label{eq-taylor}
    \|h-P\|_{L^{\infty}(B_r)} \leq \frac{1}{6}\|D^3h\|_{L^{\infty}(B_{3/4})}r^3 \leq 288n^3r^3 \eqqcolon c_0r^3 \quad \text{for all $r \leq 3/4$}.
\end{equation}
Here we used the interior estimates for a harmonic function $h$ (Lemma~\ref{lem-est-harmonic}).

We combine \eqref{eq-approxlemma} and \eqref{eq-taylor} to obtain that
\begin{equation*}
    \|u-P\|_{L^{\infty}(B_r)} \leq c_0r^3+\varepsilon \quad \text{for all $r \leq 3/4$}.
\end{equation*}
Choose now $r_0$ small enough so that $c_0r_0^3 \leq r_0^{2+\alpha}/2$ (notice $\alpha<1$), and then $\varepsilon$ small enough so that $\varepsilon\leq r_0^{2+\alpha}/2$. Then we obtain
\begin{equation*}
    \|u-P\|_{L^{\infty}(B_{r_0})} \leq r_0^{2+\alpha}.
\end{equation*}
The remaining iteration argument is quite standard; see \cite[Section~8.1]{CC95} for instance.
\end{proof}

\begin{remark}[The condition on the ellipticity ratio]\label{rmk-constant-A}
    Since the condition on $\Lambda/\lambda$ given in Theorem~\ref{thm-c2alpha-smooth} is rather implicit, we may recall the observation from Remark~\ref{rmk-rough}. In fact, since $C_1C_2 \leq 6(nC_0+8C_0^2)$, the condition on the ellipticity ratio will be satisfied when we assume
    \begin{equation*}
        \frac{\Lambda}{\lambda}-1 \leq \frac{1}{108(nC_0+8C_0^2)} (576n^3)^{-\frac{2+\alpha}{1-\alpha}}.
    \end{equation*}
    We also note that one candidate of $C_0$ is $33$ from Remark~\ref{rmk-C0}.
    
    On the other hand, in a more heuristic manner, we have
    \begin{equation*}
        \frac{\Lambda}{\lambda}-1  \approx \frac{1}{n} \left(\frac{1}{n^3} \right)^{\frac{2+\alpha}{1-\alpha}} \quad \text{up to a multiplicative constant}.
    \end{equation*}
    In particular, it is clear that if $\alpha \to 1^-$, then $\Lambda/\lambda  \to 1^+$ as expected.
\end{remark}

We finally employ the mollification method to derive the same estimate without the smoothness assumption on $F$.

\begin{proof}[Proof of Theorem~\ref{thm-c2alpha-general}]
    As in the proof of Lemma~\ref{lem-approximation}, we may assume that $\|u\|_{L^{\infty}(B_2)}\leq 1$. Moreover, we consider the mollification $F^{\varepsilon}$ of $F$ as in Remark~\ref{rmk-mollification} and let $u^{\varepsilon} \in C^{\infty}(B_1) \cap C(\overline B_1)$ be a unique solution of
    \begin{equation*}
        \left\{
        \begin{aligned}
            F^{\varepsilon}(D^2u^{\varepsilon})&=0  &&\text{in $B_1$},\\
            u^{\varepsilon}&=u &&\text{on $\partial B_1$}.
        \end{aligned}
        \right.
    \end{equation*}
    We now apply Theorem~\ref{thm-c2alpha-smooth} for $u^{\varepsilon}$ to obtain
    \begin{equation*}
    \|u^{\varepsilon}\|_{C^{2,\alpha}(\overline B_{1/2})} \leq C\|u^{\varepsilon}\|_{L^\infty(B_1)} \leq C\|u\|_{L^{\infty}(B_2)},
\end{equation*}
where $C>0$ is independent of $\varepsilon$. 

On the other hand, it is easy to check that there exists a subsequence of $\{u^{\varepsilon}\}_{\varepsilon>0}$ that converges uniformly (in $C^2$) to some function $v$ in every compact subset of $B_1$. Then the stability theorem (Theorem~\ref{thm-stability}) implies that $v$ becomes a viscosity solution of $F(D^2v)=0$ in $B_1$. Furthermore,  we also have $v=u$ on $\partial B_1$ in view of the global H\"older estimate for $u^{\varepsilon}$ (Theorem~\ref{thm-global-holder}). Thus, we conclude that $v=u$ and in particular, 
\begin{equation*}
    \|u\|_{C^{2,\alpha}(\overline B_{1/2})}  \leq C\|u\|_{L^{\infty}(B_2)}
\end{equation*}
as desired.
\end{proof}


\begin{thebibliography}{CDLP10}
	
	\bibitem[ASS12]{ASS12}
	Scott~N. Armstrong, Luis~E. Silvestre, and Charles~K. Smart.
	\newblock Partial regularity of solutions of fully nonlinear, uniformly
	elliptic equations.
	\newblock {\em Comm. Pure Appl. Math.}, 65(8):1169--1184, 2012.
	
	\bibitem[BBL02]{BBL02}
	Guy Barles, Samuel Biton, and Olivier Ley.
	\newblock A geometrical approach to the study of unbounded solutions of
	quasilinear parabolic equations.
	\newblock {\em Arch. Ration. Mech. Anal.}, 162(4):287--325, 2002.
	
	\bibitem[BT25]{BT25}
	Anup Biswas and Erwin Topp.
	\newblock Lipschitz regularity of fractional {$p$}-{L}aplacian.
	\newblock {\em Ann. PDE}, 11(2):Paper No. 27, 43, 2025.
	
	\bibitem[CC95]{CC95}
	Luis~A. Caffarelli and Xavier Cabr\'{e}.
	\newblock {\em Fully nonlinear elliptic equations}, volume~43 of {\em American
		Mathematical Society Colloquium Publications}.
	\newblock American Mathematical Society, Providence, RI, 1995.
	
	\bibitem[CDLP10]{CDLP10}
	I.~Capuzzo~Dolcetta, F.~Leoni, and A.~Porretta.
	\newblock H\"older estimates for degenerate elliptic equations with coercive
	{H}amiltonians.
	\newblock {\em Trans. Amer. Math. Soc.}, 362(9):4511--4536, 2010.
	
	\bibitem[CIL92]{CIL92}
	M.~G. Crandall, H.~Ishii, and P.-L. Lions.
	\newblock User's guide to viscosity solutions of second order partial
	differential equations.
	\newblock {\em Bull. Amer. Math. Soc. (N.S.)}, 27(1):1--67, 1992.
	
	\bibitem[CY00]{CY00}
	Luis~A. Caffarelli and Yu~Yuan.
	\newblock A priori estimates for solutions of fully nonlinear equations with
	convex level set.
	\newblock {\em Indiana Univ. Math. J.}, 49(2):681--695, 2000.
	
	\bibitem[DG57]{DG57}
	Ennio De~Giorgi.
	\newblock Sulla differenziabilit\`a{} e l'analiticit\`a{} delle estremali degli
	integrali multipli regolari.
	\newblock {\em Mem. Accad. Sci. Torino. Cl. Sci. Fis. Mat. Nat. (3)}, 3:25--43,
	1957.
	
	\bibitem[Eva82]{Eva82}
	Lawrence~C. Evans.
	\newblock Classical solutions of fully nonlinear, convex, second-order elliptic
	equations.
	\newblock {\em Comm. Pure Appl. Math.}, 35(3):333--363, 1982.
	
	\bibitem[Gof25]{Gof25}
	Alessandro Goffi.
	\newblock On the smoothness of solutions of fully nonlinear second order
	equations in the plane.
	\newblock {\em arXiv preprint arXiv:2512.00951}, 2025.
	
	\bibitem[GT01]{GT01}
	David Gilbarg and Neil~S. Trudinger.
	\newblock {\em Elliptic partial differential equations of second order}.
	\newblock Classics in Mathematics. Springer-Verlag, Berlin, 2001.
	\newblock Reprint of the 1998 edition.
	
	\bibitem[Hua19]{Hua19}
	Qingbo Huang.
	\newblock Regularity theory for {$L^n$}-viscosity solutions to fully nonlinear
	elliptic equations with asymptotical approximate convexity.
	\newblock {\em Ann. Inst. H. Poincar\'e{} C Anal. Non Lin\'eaire},
	36(7):1869--1902, 2019.
	
	\bibitem[IL90]{IL90}
	H.~Ishii and P.-L. Lions.
	\newblock Viscosity solutions of fully nonlinear second-order elliptic partial
	differential equations.
	\newblock {\em J. Differential Equations}, 83(1):26--78, 1990.
	
	\bibitem[IS13a]{IS13}
	C.~Imbert and L.~Silvestre.
	\newblock {$C^{1,\alpha}$} regularity of solutions of some degenerate fully
	non-linear elliptic equations.
	\newblock {\em Adv. Math.}, 233:196--206, 2013.
	
	\bibitem[IS13b]{IS13b}
	Cyril Imbert and Luis Silvestre.
	\newblock An introduction to fully nonlinear parabolic equations.
	\newblock In {\em An introduction to the {K}\"{a}hler-{R}icci flow}, volume
	2086 of {\em Lecture Notes in Math.}, pages 7--88. Springer, Cham, 2013.
	
	\bibitem[IS16]{IS16}
	Cyril Imbert and Luis Silvestre.
	\newblock Estimates on elliptic equations that hold only where the gradient is
	large.
	\newblock {\em J. Eur. Math. Soc. (JEMS)}, 18(6):1321--1338, 2016.
	
	\bibitem[Ish89]{Ish89}
	Hitoshi Ishii.
	\newblock On uniqueness and existence of viscosity solutions of fully nonlinear
	second-order elliptic {PDE}s.
	\newblock {\em Comm. Pure Appl. Math.}, 42(1):15--45, 1989.
	
	\bibitem[Jen88]{Jen88}
	Robert Jensen.
	\newblock The maximum principle for viscosity solutions of fully nonlinear
	second order partial differential equations.
	\newblock {\em Arch. Rational Mech. Anal.}, 101(1):1--27, 1988.
	
	\bibitem[Kry82]{Kry82}
	N.~V. Krylov.
	\newblock Boundedly inhomogeneous elliptic and parabolic equations.
	\newblock {\em Izv. Akad. Nauk SSSR Ser. Mat.}, 46(3):487--523, 670, 1982.
	
	\bibitem[Kry83]{Kry83}
	N.~V. Krylov.
	\newblock Boundedly inhomogeneous elliptic and parabolic equations in a domain.
	\newblock {\em Izv. Akad. Nauk SSSR Ser. Mat.}, 47(1):75--108, 1983.
	
	\bibitem[KS79]{KS79}
	N.~V. Krylov and M.~V. Safonov.
	\newblock An estimate for the probability of a diffusion process hitting a set
	of positive measure.
	\newblock {\em Dokl. Akad. Nauk SSSR}, 245(1):18--20, 1979.
	
	\bibitem[KS80]{KS80}
	N.~V. Krylov and M.~V. Safonov.
	\newblock A property of the solutions of parabolic equations with measurable
	coefficients.
	\newblock {\em Izv. Akad. Nauk SSSR Ser. Mat.}, 44(1):161--175, 239, 1980.
	
	\bibitem[Le20]{Le20}
	Nam~Q. Le.
	\newblock Polynomial decay in {$W^{2,\epsilon}$} estimates for viscosity
	supersolutions of fully nonlinear elliptic equations.
	\newblock {\em Math. Res. Lett.}, 27(1):189--207, 2020.
	
	\bibitem[LLY24]{LLY24}
	Ki-Ahm Lee, Se-Chan Lee, and Hyungsung Yun.
	\newblock {$C^{1,\alpha}$}-regularity for solutions of degenerate/singular
	fully nonlinear parabolic equations.
	\newblock {\em J. Math. Pures Appl. (9)}, 181:152--189, 2024.
	
	\bibitem[LY24]{LY24}
	Se-Chan Lee and Hyungsung Yun.
	\newblock {$C^{1,\alpha}$}-regularity for functions in solution classes and its
	application to parabolic normalized {$p$}-{L}aplace equations.
	\newblock {\em J. Differential Equations}, 378:539--558, 2024.
	
	\bibitem[Moo19]{Moo19}
	Connor Mooney.
	\newblock A proof of the {K}rylov-{S}afonov theorem without localization.
	\newblock {\em Comm. Partial Differential Equations}, 44(8):681--690, 2019.
	
	\bibitem[Mos60]{Mos60}
	J\"urgen Moser.
	\newblock A new proof of {D}e {G}iorgi's theorem concerning the regularity
	problem for elliptic differential equations.
	\newblock {\em Comm. Pure Appl. Math.}, 13:457--468, 1960.
	
	\bibitem[Mos61]{Mos61}
	J\"urgen Moser.
	\newblock On {H}arnack's theorem for elliptic differential equations.
	\newblock {\em Comm. Pure Appl. Math.}, 14:577--591, 1961.
	
	\bibitem[Nas58]{Nas58}
	J.~Nash.
	\newblock Continuity of solutions of parabolic and elliptic equations.
	\newblock {\em Amer. J. Math.}, 80:931--954, 1958.
	
	\bibitem[Nir53]{Nir53}
	Louis Nirenberg.
	\newblock On nonlinear elliptic partial differential equations and {H}\"older
	continuity.
	\newblock {\em Comm. Pure Appl. Math.}, 6:103--156; addendum, 395, 1953.
	
	\bibitem[NT23]{NT23}
	Thialita~M. Nascimento and Eduardo~V. Teixeira.
	\newblock New regularity estimates for fully nonlinear elliptic equations.
	\newblock {\em J. Math. Pures Appl. (9)}, 171:1--25, 2023.
	
	\bibitem[NV07]{NV07}
	Nikolai Nadirashvili and Serge Vl\u{a}du\c{t}.
	\newblock Nonclassical solutions of fully nonlinear elliptic equations.
	\newblock {\em Geom. Funct. Anal.}, 17(4):1283--1296, 2007.
	
	\bibitem[NV08]{NV08}
	Nikolai Nadirashvili and Serge Vl\u{a}du\c{t}.
	\newblock Singular viscosity solutions to fully nonlinear elliptic equations.
	\newblock {\em J. Math. Pures Appl. (9)}, 89(2):107--113, 2008.
	
	\bibitem[NV13]{NV13}
	Nikolai Nadirashvili and Serge Vl{\u a}du{\c t}.
	\newblock Singular solutions of {H}essian elliptic equations in five
	dimensions.
	\newblock {\em J. Math. Pures Appl. (9)}, 100(6):769--784, 2013.
	
	\bibitem[Sav07]{Sav07}
	Ovidiu Savin.
	\newblock Small perturbation solutions for elliptic equations.
	\newblock {\em Comm. Partial Differential Equations}, 32(4-6):557--578, 2007.
	
	\bibitem[Wan92]{Wan92a}
	Lihe Wang.
	\newblock On the regularity theory of fully nonlinear parabolic equations. {I}.
	\newblock {\em Comm. Pure Appl. Math.}, 45(1):27--76, 1992.
	
	\bibitem[WN23]{WN23}
	Duan Wu and Pengcheng Niu.
	\newblock Interior pointwise {$C^{2,\alpha}$} regularity for fully nonlinear
	elliptic equations.
	\newblock {\em Nonlinear Anal.}, 227:Paper No. 113159, 9, 2023.
	
\end{thebibliography}

\end{document}